\documentclass{amsart}

\usepackage{amsfonts,amssymb,amsmath,enumerate,verbatim,mathtools,tikz,bm,mathrsfs,tikz-cd,hyperref,comment,stmaryrd,amsthm}
\oddsidemargin 0mm
\evensidemargin 0mm
\topmargin 0mm
\textwidth 160mm
\textheight 230mm
\tolerance=9999
\usepackage{colonequals}
\usepackage{cleveref}
\usepackage{adjustbox}
\usepackage[all,pdf]{xy}
\hypersetup{
    colorlinks=true,
    linkcolor=blue,
    filecolor=magenta,      
    urlcolor=cyan,
}
\usetikzlibrary{arrows}
\usetikzlibrary{decorations.markings}
\tikzset{negated/.style={
        decoration={markings,
            mark= at position 0.5 with {
                \node[transform shape] (tempnode) {$\backslash$};
            }
        },
        postaction={decorate}
    }
}

\makeatletter
\@namedef{subjclassname@2020}{\textup{2020} Mathematics Subject Classification}
\makeatother

\author{Souvik Dey}
\address{Department of Algebra, Faculty of Mathematics and Physics, Charles University in Prague, Sokolovska´ 83, 186 75 Praha, Czech Republic}
\email{souvik.dey@matfyz.cuni.cz}   

\author[Jian Liu]{Jian Liu}
\address{School of Mathematics and Statistics, and Hubei Key Laboratory of Mathematical Sciences,  Central China Normal University,  Wuhan 430079, P.R. China}
\email{jianliu@ccnu.edu.cn}

\author[Liran Shaul]{Liran Shaul}
\address{Department of Algebra, Faculty of Mathematics and Physics, Charles University in Prague, Sokolovsk\'a 83, 186 75 Praha, Czech Republic}
\email{shaul@karlin.mff.cuni.cz}

\keywords{open subset, Zariski topology, level, finite injective dimension locus, ghost index, DG ring, point-wise dualizing DG module, derived category, localization of triangulated categories}

\subjclass[2020]{Primary 18G80; Secondary 13D09, 16E45, 18E35}

\newcommand{\sg}{\mathsf{sg}}

\DeclareMathOperator{\amp}{amp}

\DeclareMathOperator{\depth}{depth}

\DeclareMathOperator{\h}{H}

\newcommand{\Z}{\mathbb{Z}}

\newcommand{\T}{\mathcal{T}}

\newcommand{\D}{\mathsf{D}}
\newcommand{\K}{\mathsf{K}}
\newcommand{\C}{\mathcal{C}}

\newcommand{\G}{\mathcal{G}}

\newcommand{\para}{\mathbin{\!/\mkern-5mu/\!}}

\pgfdeclarelayer{bg}    
\pgfsetlayers{bg,main} 
\newcommand{\x}{{\bm{x}}}

\newcommand{\del}{\partial}

\newcommand{\m}{\mathfrak{m}}

\newcommand{\p}{\mathfrak{p}}
\newcommand{\q}{\mathfrak{q}}

\DeclareMathOperator{\pd}{pd}

\DeclareMathOperator{\level}{\mathsf{level}}
\DeclareMathOperator{\Gor}{\mathsf{Gor}}
\DeclareMathOperator{\FID}{\mathsf{FID}}
\DeclareMathOperator{\thick}{\mathsf{thick}}
\DeclareMathOperator{\gin}{\mathsf{gin}}
\DeclareMathOperator{\maxSpec}{maxSpec}
\DeclareMathOperator{\Inj}{Inj}
\DeclareMathOperator{\Rfd}{Rfd}

\DeclareMathOperator{\id}{id}
\DeclareMathOperator{\Spec}{Spec}
\DeclareMathOperator{\Supp}{\mathsf{Supp}}
\DeclareMathOperator{\ac}{ac}

\DeclareMathOperator{\Cone}{Cone}
\DeclareMathOperator{\RHom}{\mathsf{RHom}}
\DeclareMathOperator{\Add}{Add}
\DeclareMathOperator{\seqdepth}{seq.depth}
\DeclareMathOperator{\CM}{CM}
\renewcommand{\k}{\Bbbk}

\def\ann{\operatorname{ann}}

\def\depth{\operatorname{depth}}

\def\Ext{\operatorname{Ext}}

\def\h{\operatorname{H}}
\def\Hom{\operatorname{Hom}}

\def\id{\mathrm{id}}

\def\m{\mathfrak{m}}

\def\p{\mathfrak{p}}

\newtheorem{theorem}{Theorem}[section]
\newtheorem{proposition}[theorem]{Proposition}

\newtheorem{lemma}[theorem]{Lemma}
\newtheorem{corollary}[theorem]{Corollary}

\theoremstyle{definition}
\newtheorem{example}[theorem]{Example}
\newtheorem{remark}[theorem]{Remark}

\newtheorem{chunk}[theorem]{}

\usepackage[utf8]{inputenc}

\newtheorem*{ack}{Acknowledgements}

\title[Openness with respect to levels in triangulated categories]{Openness with respect to levels in triangulated categories}

\begin{document}

\begin{abstract}
Given a compactly generated triangulated category $\T$ equipped with an action of a graded-commutative Noetherian ring $R$,
generalizing results of Letz,
we prove a general result concerning the openness with respect to levels of compact objects in $\T$.
Applications are given to derived categories of commutative Noetherian rings, derived categories of commutative Noetherian DG rings and singularity categories.
\end{abstract}

\maketitle
\setcounter{tocdepth}{1}


\section{Introduction}
\label{sec:introduction}

Let $\mathbb P$ be a property of commutative Noetherian local rings. The $\mathbb P$-locus of a commutative Noetherian ring $R$ is the subset of the spectrum $\Spec(R)$ consisting of prime ideals $\p$ such that the property $\mathbb P$ holds for $R_\p$. It is a classical question whether the $\mathbb P$-locus of a ring $R$ is open in $\Spec(R)$ with respect to the Zariski topology. If $R$ is an excellent ring, the $\mathbb P$-locus of $R$ is open in $\Spec(R)$ for any of the properties $\mathbb P$: regularity, complete intersection, Gorenstein, or Cohen-Macaulay; see \cite{GM1978} and \cite[Section 7]{Grothendieck:1965}.
The openness of the regular locus is of independent interest and has been studied in \cite{Iyengar/Takahashi:2016, Iyengar/Takahashi:2019, Liu:2023, Nagata1959}. Under mild assumptions, it has been more precisely characterized in terms of the cohomological annihilator by Iyengar and Takahashi \cite{Iyengar/Takahashi:2016} and the annihilator of the singularity category by the second author \cite{Liu:2023}.

Let $\mathbb P$ be a property of modules over commutative Noetherian local rings. Similarly, the $\mathbb P$-locus of a module $M$ over a commutative Noetherian ring $R$ is the set consisting of prime ideal $\p$ of $R$ such that the property $\mathbb P$ holds for $M_\p$. The free locus and the finite projective dimension locus of a finitely generated module are always open in $\Spec(R)$; see \cite[Section 9]{Brodmann-Sharp}. 
For an acceptable ring $R$ in the sense of \cite{sharp}, Leuschke \cite{Leuschke:2002} observed that the Gorenstein locus of a finitely generated module is open in $\Spec(R)$. For an excellent ring $R$, Takahashi \cite{Takahashi:2006_Glasgow} proved that the finite injective dimension locus of a finitely generated module is open in $\Spec(R)$. This result was recently extended by Kimura \cite{Kimura} to acceptable rings. Notably, the finite injective dimension locus of the ring itself is precisely the Gorenstein locus of the ring. 

Given a triangulated category $\T$,
a subcategory $\G$ of $\T$, and an object $X \in \T$,
the \emph{level of $X$ with respect to $\G$},
denoted by $\level_\T^{\G}(X)$, 
was first defined in \cite[2.3]{ABIM:2010} as a measurement for the number of steps needed to build $X$ from $\G$.
Its definition is recalled in \ref{level}.

Let $R$ be a commutative Noetherian ring and $X,G$ be objects in the bounded derived category $\D^f_b(R)$.  For each $n\geq 0$, Letz \cite{Letz:2021} observed that the set
   $$
       \{\p\in \Spec(R)\mid \level_{\D^f_b(R_\p)}^{G_\p}(X_\p)\leq n\}
      $$
      is open in $\Spec(R)$.

The main result of this paper is a far reaching generalization of this result. We work in the Benson-Iyengar-Krause \cite{BIK:2008} framework of $R$-linear triangulated categories,
which allows one to perform localization in an arbitrary triangulated category equipped with an action of a graded-commutative Noetherian ring $R$. 
In this setting, we prove the following general open loci result:

\begin{theorem}\label{T2}
(See \ref{main})
Let $R$ be a graded-commutative Noetherian ring, and let $\T$ be an $R$-linear compactly generated triangulated category and $\C$ be a triangulated subcategory of $\T^c$. For each $n\geq 0$, $X\in \C$, and a full subcategory $\G\subseteq \C$, the set
      $$
       \{\p\in \Spec(R)\mid \level_{\C_\p}^{\G}(X)\leq n\}
      $$
      is open in $\Spec(R)$. In particular, the set $\{\p\in \Spec(R)\mid \level^\G_{\C_{\p}}(X)<\infty\}$ is open in $\Spec(R)$.
\end{theorem}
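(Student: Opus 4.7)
The plan is to prove openness via the standard criterion: it suffices to show that for every $\p \in \Spec(R)$ with $\level^\G_{\C_\p}(X) \leq n$ there exists a homogeneous element $s \in R \setminus \p$ such that $\level^\G_{\C_{\p'}}(X) \leq n$ for every $\p'$ in the basic open set $D(s) = \{\q : s \notin \q\}$. The essential ingredient is that, in the Benson--Iyengar--Krause framework, objects of $\C_\p$ coincide with those of $\C$ and the localization formula $\Hom_{\C_\p}(A, B) \cong \Hom_\C(A, B)_\p$ holds for all $A, B \in \C$, since every object of $\C \subseteq \T^c$ is compact in $\T$. Thus each morphism in $\C_\p$ is a fraction of a morphism in $\C$, and each equation among morphisms in $\C_\p$ becomes valid in $\C$ after multiplication by some element of $R \setminus \p$.

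Using this, I would induct on $n$ to show that membership $X \in \thick^n_{\C_\p}(\G)$ is certified by a finite amount of data that already descends to $\C_s$ (the localization obtained by inverting a single element $s$) for some homogeneous $s \notin \p$; then for every $\p' \in D(s)$ the same witness, composed with the further localization $\C_s \to \C_{\p'}$, yields $X \in \thick^n_{\C_{\p'}}(\G)$. The base case $n = 0$ reduces to finding $s \notin \p$ that annihilates $\id_X \in \End_\C(X)$. For the inductive step, unwind the recursive description of $\thick^n$: $X$ is a retract in $\C_\p$ of some $Y$ fitting in a distinguished triangle $A \to Y \to B \to A[1]$ with $A \in \thick^{n-1}_{\C_\p}(\G)$ and $B$ a retract of a finite coproduct of shifts of objects of $\G$. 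Each piece of this witness---the section--retraction pair $(\iota, \pi)$ realizing $X$ as a retract of $Y$ via $\pi \iota = \id_X$, the analogous pair for $B$, the connecting morphism of the triangle, and, by the inductive hypothesis, the witness for $A$---is specified by finitely many morphisms satisfying finitely many relations, and hence becomes valid in $\C_{s_i}$ for some $s_i \in R \setminus \p$. The product of the $s_i$, taken homogeneous using that homogeneous elements of $R \setminus \p$ are cofinal when $\p$ is homogeneous, serves as the required $s$.

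The principal obstacle is the careful bookkeeping in the inductive step: one must verify that distinguished triangles in $\C_\p$ descend to distinguished triangles in $\C_s$ for a suitable $s \notin \p$, which should follow from the calculus-of-fractions description of the localization together with the compatibility of cones with the localization functor $\C \to \C_s$, and that retracts descend by clearing the single denominator appearing in the equation $\pi \iota = \id_X$. Once the localization formalism is in place, the proof amounts to a finite accumulation of such denominator-clearing operations, and the homogeneity of $s$ is ensured by the graded cofinality observation above.
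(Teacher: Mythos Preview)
Your approach is essentially correct and can be made into a valid proof, but it is genuinely different from the paper's argument.

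The paper does not induct on $n$ and does not descend witness data.  Instead, it works in the ambient compactly generated category $\T$ and constructs, once and for all, an Adams resolution
\[
X \xrightarrow{f_0} X_1 \xrightarrow{f_1} X_2 \xrightarrow{f_2} \cdots
\]
of $X$ with respect to $\Add_\T(\G)$ (this uses that $\T$ has arbitrary coproducts).  Via the converse ghost lemma in $\T$ (Lemma~\ref{converse-compact}) combined with Proposition~\ref{small-big}, one obtains
\[
\level^\G_{\C_\p}(X)\le n \iff (f_{n-1}\circ\cdots\circ f_0)/1 = 0 \text{ in } \T_\p,
\]
and Lemma~\ref{bijective} converts the right-hand side into vanishing of a single element of $\Hom_\T(X,X_n)_\p$.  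Hence $\Spec(R)\setminus V_n = V(\ann_R(f_{n-1}\circ\cdots\circ f_0))$ is a single Zariski-closed set.  The gain is conceptual economy: one fixed morphism, independent of $\p$, controls the level at every prime simultaneously, and there is no recursion.

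Your route, by contrast, stays entirely inside $\C$ and its localizations, never invoking coproducts in $\T$ or ghost maps; it uses the hypothesis $\C\subseteq\T^c$ only through the Hom-localization isomorphism.  The cost is the bookkeeping you anticipate: the connecting map $h$ in $\C_\p$ must be written as $h_0/s$ with $h_0$ in $\C$, the cone of $h_0$ taken in $\C$ serves for every $\p'\in D(s)$, and the isomorphism between that cone and your original middle object must itself be descended by clearing further denominators.  One small warning: the paper's definition of $\thick^n$ asks for a complemented direct summand $X\oplus X'$ rather than a bare retract, and $\C_{\p'}$ need not be idempotent-complete, so you should descend the full isomorphism $Y\cong X\oplus X'$ (two equations) rather than only a section--retraction pair.
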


In the above result, $\C_\p$ is the localization of $\C$ at $\p$ (cf. \ref{localization at p}), and $\T^c$ is the full subcategory of $\T$ consisting of compact objects (see  \ref{def of compact}).

Theorem \ref{T2} builds on the work of Letz \cite{Letz:2020, Letz:2021}.  A new ingredient is a model of this result that applies effectively to a triangulated subcategory within the full subcategory of compact objects in a compactly generated triangulated category equipped with an action of a graded-commutative Noetherian ring. Letz's proof uses the argument from the proof of the converse coghost lemma for the bounded derived category, as established by Oppermann and Šťovíček \cite{OS:2012}. In contrast, our proof of Theorem \ref{T2} is based on the converse ghost lemma in $\T$; see Lemma \ref{converse-compact}.

The remainder of the paper is dedicated to applications of this result.
First, we explain in Remark \ref{coincide} how the result of Lets follows from \ref{T2}.

We then show that for a commutative Noetherian ring $R$ and objects $X,G$ in the singularity category $\D_{\sg}(R)$, the set
$
\{\p\in \Spec(R)\mid \level_{\D_{\sg}(R_\p)}^{G_\p}(X_\p)\leq n\}
$
is open in $\Spec(R)$; see Corollary \ref{application-singularity}.
A similar result holds for certain derived categories related to modular representation theory of finite groups, see Example \ref{modular-example}.
Another application of Theorem \ref{T2}
is the local-global principle for $\C$; see Corollary \ref{local-global principle}. This extends Letz's local-global principle \cite{Letz:2021} for the bounded derived category over a commutative Noetherian ring. 

Our next task is to apply \ref{T2} to derived categories of commutative DG rings.
Let $A$ be a non-negative commutative DG ring (where we grade homologically) such that $\h_0(A)$ is Noetherian and $\h(A)$ is finitely generated over $\h_0(A)$.  The third author \cite{Shaul2022} established that the Gorenstein locus $$\Gor_{\h_0(A)}(A)\colonequals\{\bar\p\in \Spec(\h_0(A))\mid A_{\bar \p} \text{ is a Gorenstein DG ring}\}$$
is open in $\Spec(\h_0(A))$ if $A$ has a dualizing DG module in the sense of \cite{Yekutieli}; see Section \ref{notation} for the definition of $A_{\overline{\p}}$ and \ref{Gorenstein def} for Gorenstein DG rings.
Here, much more generally, and using \ref{T2},
we study the finite injective dimension 
 locus of a DG $A$-module $X$, denoted by $\FID_{\h_0(A)}(X)$, over the non-negative commutative DG ring $A$; see \ref{FID}. The main result concerning this is the following. 
\begin{theorem}\label{T1}
(See \ref{FID open}) Let $A$ be a non-negative commutative DG ring such that $A^\natural$ is Noetherian and $\h(A)$ is finitely generated over $\h_0(A)$.   Assume $A$ has a point-wise dualizing DG module. For each $X\in \D^f_b(A)$, 
 $\FID_{\h_0(A)}(X)$ is open in $\Spec(\h_0(A))$.
\end{theorem}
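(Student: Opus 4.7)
The plan is to realize $\FID_{\h_0(A)}(X)$ as the locus where $X$ has finite level with respect to a point-wise dualizing module, and then to invoke Theorem \ref{T2}. Fix a point-wise dualizing DG module $R$ over $A$ in the sense of Yekutieli; then $R \in \D^f_b(A)$ and, for each $\bar{\p} \in \Spec(\h_0(A))$, the localization $R_{\bar{\p}}$ is a dualizing DG module over $A_{\bar{\p}}$. The first step is a DG analogue of a classical fact: over a commutative Noetherian local DG ring $B$ with dualizing DG module $D$, an object $M \in \D^f_b(B)$ has finite injective dimension over $B$ if and only if $M \in \thick_{\D^f_b(B)}(D)$. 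The easy direction holds because $D$ has finite injective dimension and finite injective dimension is a thick condition in $\D^f_b(B)$. For the converse, $\RHom_B(-,D)$ is a contravariant autoequivalence of $\D^f_b(B)$ with $\RHom_B(D,D)\simeq B$, so it exchanges $\thick(D)$ and $\thick(B)$; combined with the classical characterization that $M$ has finite injective dimension if and only if $\RHom_B(M,D)$ is perfect, this places $M$ in $\thick(D)$. Applied at each $B=A_{\bar{\p}}$ and $D=R_{\bar{\p}}$, this yields
\[
X_{\bar{\p}}\text{ has finite injective dimension over }A_{\bar{\p}}\iff\level^{R_{\bar{\p}}}_{\D^f_b(A_{\bar{\p}})}(X_{\bar{\p}})<\infty.
\]

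Next I set up the framework of Theorem \ref{T2}. Take $\T\colonequals\K(\Inj A)$, the homotopy category of injective DG $A$-modules. Under the standing hypotheses ($A^\natural$ Noetherian and $\h(A)$ finitely generated over $\h_0(A)$), $\T$ is a compactly generated $\h(A)$-linear triangulated category whose subcategory of compact objects $\T^c$ is, via the canonical functor $\K(\Inj A)\to\D(A)$, equivalent to $\D^f_b(A)$. Put $\C\colonequals\D^f_b(A)\subseteq\T^c$ and $\G\colonequals\{R\}\subseteq\C$; the given $X$ lies in $\C$. Theorem \ref{T2}, in its ``in particular'' form, then yields openness in $\Spec(\h_0(A))$ of
\[
\{\bar{\p}\in\Spec(\h_0(A))\mid\level^{\G}_{\C_{\bar{\p}}}(X)<\infty\}.
\]
Identifying $\C_{\bar{\p}}$ with $\D^f_b(A_{\bar{\p}})$ and $R$ with $R_{\bar{\p}}$ under the BIK localization, the first paragraph's characterization shows this set equals $\FID_{\h_0(A)}(X)$, proving the theorem.

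The principal obstacle is the bookkeeping required to connect the BIK formalism underlying Theorem \ref{T2} with the DG-ring formalism of $A$: one must verify (i) that $\K(\Inj A)$ is compactly generated with the expected compact objects under the paper's Noetherian hypotheses, and (ii) that the BIK localization $\C_{\bar{\p}}$ of $\C$ at $\bar{\p}$ matches $\D^f_b(A_{\bar{\p}})$, with $R$ localizing to $R_{\bar{\p}}$. The DG analogue of the $\thick(D)$-characterization of finite injective dimension invoked in the first paragraph is itself a substantive statement requiring proof, but it should follow from the theory of dualizing DG modules developed by Yekutieli and the third author.
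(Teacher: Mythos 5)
Your proposal follows essentially the same route as the paper: it localizes the finite injective dimension condition to membership in $\thick_{\D^f_b(A_{\bar\p})}(D_{\bar\p})$ via the point-wise dualizing module (the paper's Propositions \ref{point-wise point-wise dualizing} and \ref{characterization of finite injective dim}, which rest on the DG version of Gabber's theorem), and then applies Theorem \ref{T2} with $\T=\K(\Inj A)$, $\C=\D^f_b(A)$, $\G=\{D\}$ together with the identification $\C_{\bar\p}\simeq \D^f_b(A_{\bar\p})$, exactly as in Corollary \ref{H_0} and Theorem \ref{FID open}. The only cosmetic differences are that the paper works with the $A_0$-linear structure on $\K(\Inj A)$ and transfers openness to $\Spec(\h_0(A))$ along the map induced by $A_0\to\h_0(A)$, and it only needs (and only establishes) a fully faithful embedding $\D^f_b(A)\hookrightarrow \K(\Inj A)^c$ rather than the equivalence you assert.
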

The notion of a point-wise dualizing DG module here is less restrictive than the definitions of a dualizing DG module as given in \cite{FIJ} and \cite{Yekutieli}; see Remark \ref{compare definitions}.
Theorem \ref{T1} recovers the following result due to Kimura \cite{Kimura}: For an essentially finite type algebra over a Gorenstein ring, the finite injective dimension locus of a finitely generated module is open; see Remark \ref{ess finite type}.

The article is organized as follows. In Section \ref{notation}, we recall some basic notations used throughout the article.  In Section \ref{localization-section}, we explore the openness with respect to levels in triangulated categories. In Section \ref{local-global section}, we establish the local-global principle for bounded derived categories over DG rings. In Section \ref{Section: DG of Gabber}, we prove a DG version of a result by Gabber, giving necessary and sufficient conditions for a finitely generated DG module to have finite injective dimension locally. Finally, in Section \ref{openness section}, we investigate the openness of the finite injective dimension locus over DG rings.

\begin{ack}
We would like to thank Leonid Positselski for his detailed explanation and helpful discussions of  Lemma \ref{compact}. Souvik Dey was partially supported by the Charles University Research Center program No.UNCE/SCI/022 and a grant GA \v{C}R 23-05148S from the Czech Science Foundation. Jian Liu was supported by the National Natural Science Foundation of China (No. 12401046) and the Fundamental Research Funds for the Central Universities (No. CCNU24JC001).
\end{ack}

\section{Notation, terminology, and preliminary}\label{notation}
A differential graded (DG) ring $A$ is called \emph{commutative} if it is graded-commutative and satisfies $a^2=0$ for any element $a$ of odd degree. 
Note that we grade homologically, so that the differential has degree $-1$.
In this article, we will focus on the study of non-negative commutative DG ring $A$ such that $\h_0(A)$ is Noetherian and $\h(A)$ is finitely generated over $\h_0(A)$; see more details of the DG rings in  \cite{Yekutieli:book}. For each DG $A$-module $M$, set
$$
\sup(M)=\sup\{i\in\Z\mid \h_i(M)\neq 0\}\text{ and } \inf(M)=\inf\{i\in\Z\mid \h_i(M)\neq 0\}.
$$
The \emph{amplitude} of $M$, denoted by $\amp(M)$, is the number $\sup(M)-\inf(M)$.
\begin{chunk}\label{local DG ring}
 \textbf{Homogeneous spectrum and localizations.} For a graded-commutative Noetherian ring $R$, let $\Spec(R)$ denote the set consisting of all homogeneous prime ideals of $R$, and let $\maxSpec(R)$ denote its subset consisting of all 
 maximal homogeneous prime ideals. The set $\Spec(R)$ is endowed with the Zariski topology,  where the closed subsets are of the form $V(I)\colonequals\{\p\in \Spec(R)\mid I\subseteq \p\}$ for some graded ideal $I$ of $R$. 

    Let $A$ be a non-negative commutative DG ring. 
For each $\p\in \Spec(A_0)$, set $A_\p=A\otimes_{A_0} (A_0)_\p$ and $M_\p=M\otimes_{A_0} (A_0)_\p$ for each DG $A$-module $M$. $A_\p$ is still a non-negative commutative DG ring and $M_\p$ is a DG module over $A_\p$. For each prime ideal $\bar\p\in \Spec(\h_0(A))$, $\p= \pi^{-1}({\bar\p})$ is a prime ideal of $A_0$, where $\pi\colon A_0\rightarrow \h_0(A)$ is the canonical map. Set $A_{\bar\p}= A_\p$ and $M_{\bar\p}= M_\p$ for each DG $A$-module $M$.
$A$ is said to be \emph{local} if, in addition, $\h_0(A)$ is a local ring. For example, for each $\bar\p\in \Spec(\h_0(A))$, $A_{\bar\p}$ is a local DG ring.  
\end{chunk}

\begin{chunk}\label{derived category}
 \textbf{Derived categories of DG rings.} For a non-negative commutative DG ring $A$, let $\D(A)$ denote the derived category of DG $A$-modules; see more details in \cite{ABIM:2010} and \cite{Yekutieli:book}. $\D(A)$ is a triangulated category with the suspension functor $\Sigma$; for each $X\in \D(A)$, $(\Sigma X)_i=X_{i-1}$, $a\cdot \Sigma x=(-1)^{|a|}a\cdot x$, $\partial^{\Sigma X}=-\partial^X$.
 
If $\h_0(A)$ is Noetherian, let $\D^f(A)$ denote the full subcategory of $\D(A)$ consisting of DG $A$-modules $X$ such that $\h_i(X)$ is finitely generated over $\h_0(A)$ for each $i\in \mathbb Z$, and let $\D_b(A)$ denote the full subcategory of $\D(A)$ consisting of DG $A$-modules $X\in \D(A)$ such that $\h_i(X)=0$ for $|i|\gg 0$. Set
$$
\D^f_b(A)=\D^f(A)\cap \D_b(A).
$$
If $\h_0(A)$ is Noetherian and $\h(A)$ is finitely generated over $\h_0(A)$, then $\D^f_b(A)$ is precisely the full subcategory of $\D(A)$ consisting of DG $A$-modules $X$ such that the total homology $\h(X)$ is finitely generated over $\h_0(A)$.
\end{chunk}

\begin{chunk}
    \textbf{Thick subcategories.} For a triangulated category $\T$, a full subcategory of $\T$ is called \emph{thick} if it is closed under suspensions, cones, and direct summands. Let $\G$ be a full subcategory of $\T$, the smallest thick subcategory of $\T$ containing $\G$ will be denoted by $\thick_\T(\G)$. Following \cite[2.2.4]{ABIM:2010} and \cite[Section 2]{Bondal/vdB:2003}, it can be inductively constructed as below. 
    
Set $\thick_{\T}^0(X)=\{0\}$. Denote by $\thick_\T^1(\G)$ the smallest full subcategory of $\T$ that contains $\G$ and is closed under finite direct sums, direct summands, and suspensions. Inductively, $\thick_\T^n(\G)$ is denoted to be the full subcategory of $\T$ consisting of objects $X\in\T$ that appear in an exact triangle
$$
X_1\rightarrow X\oplus X^\prime\rightarrow X_2\rightarrow \Sigma X_1,
$$
where $X_1\in \thick^1_{\T}(\G)$ and $X_2\in \thick_{\T}^{n-1}(\G)$.
The smallest thick subcategory of $\T$ containing $X$ is precisely equal to $\displaystyle\bigcup_{n\geq 0}\thick_\T^n(\G)$.
\end{chunk}

\begin{chunk}\label{def of compact}
   \textbf{Compactly generated triangulated categories.} Let $\T$ be a triangulated
category with arbitrary direct sums. An object $X \in \T$ is called \emph{compact} if, for each class of objects
$X_i(i \in I) \in \T$ , the canonical map
$$
\bigoplus_{i\in I}
\Hom_\T (X, X_i) \longrightarrow  \Hom_\T
(X,\bigoplus_{
i\in I}
X_i)$$
is an isomorphism. We write $\T^c$ to be the full subcategory of $\T$ consisting of all compact
objects in $\T$. Note that $\T^c$ 
is a thick subcategory of $\T$.

The category $\T$ is said to be \emph{compactly generated} provided that there exists a set $S$ consisting of compact
objects in $\T$ such that any object Y satisfying $\Hom_\T(X, \Sigma^i
Y) = 0$ for all $X \in S$ and
$i \in \mathbb Z$ is zero. In this case, $\T^c =
\thick_\T (S)$; see \cite[Lemma 2.2]{Neeman-norm} and \cite[Lemma 8.3.1]{Neeman-book}. For example, if $R$ is a ring, $\D(R)$ is compactly generated by the compact object $R$, and hence $\D(R)^c=\thick_{\D(R)}(R)$.
\end{chunk}

\begin{chunk}\label{level}
    \textbf{Levels in triangulated categories.} Let $\T$ be a triangulated category. For a full subcategory $\G$ of $\T$ and $X\in \T$, following \cite[2.3]{ABIM:2010}, the \emph{level of $X$ with respect to $\G$} is defined to be 
    $$
\level_\T^{\G}(X)\colonequals\inf\{n\in \mathbb Z\mid X\in \thick_\T^n(\G)\}.
$$
Note that $\level_\T^\G(X)<\infty$ if and only if $X\in \thick_\T(\G)$. If there is an exact functor $F\colon \T\rightarrow \T^\prime$ between triangulated categories, then $\level_{\T^\prime}^{F(\G)}(F(X))\leq \level_\T^{\G}(X)$; see \cite[Lemma 2.4]{ABIM:2010}.
\end{chunk}
\begin{chunk}\label{def of ghost index}
    \textbf{Ghost map and ghost index.} Let $\T$ be a triangulated category, and let $\G$ be a full subcategory of $\T$. A morphism 
$f: X \rightarrow Y$ in $\T$ is called $\G$-\emph{ghost} if 
$$
\Hom_T(\Sigma^i G, f): \Hom_T(\Sigma^i G, X) \to \Hom_T(\Sigma^i G,Y)
$$
is zero for each $i \in \mathbb{Z}$ and $G\in G$. 
For example, if $R$ is a ring, $\T = \D(R)$ and $\G=\{R\}$,
a $\G$-ghost is a morphism $f$ such that $\h_n(f) = 0$ for all $n$.
Following \cite[Section 2]{Letz:2021}, the \emph{ghost index of $X$ 
with respect to $\G$} in $T$, denoted by $\gin^\G_\T(X)$, to be the smallest non-negative integer 
$n$ such that any composition of $n$-fold $\G$-ghost maps 
$$
X \xrightarrow{f_0} X_{1} \xrightarrow{f_{1}} \cdots \xrightarrow{f_{n-1}} X_n
$$
which starts with $X$ is zero in $\T$.
\end{chunk}

    Recall that a full subcategory $\C$ of an additive category $\T$ is called \emph{contravariantly finite} if, for each $X\in \T$, there exist $C\in \C$ and a morphism $\pi\colon C\rightarrow X$ in $\T$ such that any morphism $g\colon 
 C^\prime \rightarrow X$ with $C^\prime\in C$ factors through $\pi$ (i.e., there exists a morphism $g^\prime\colon C^\prime\rightarrow C$ such that $g=\pi\circ g^\prime)$.

\begin{chunk}\label{contravariantly finite}
    Let $\T$ be a triangulated category with arbitrary direct sums, and let $\G$ be a full subcategory of $\T$. 
    
(1) Let $f\colon X\rightarrow Y$ be a morphism in $\T$. Embed $f$ into an exact triangle
       $
       Z\xrightarrow g X\xrightarrow f Y\rightarrow \Sigma Z.
       $
       If $f$ is a $\G$-ghost, then every morphism $G\rightarrow X$ with $G\in \G$ factors through $g$. The converse holds if, in addition, $\G$ is closed under suspensions.

(2) Denote by $\Add_{\T}(\G)$ the smallest full subcategory of $\T$ that contains $\G$ and is closed under direct sums and suspensions. Note that $\Add_\T(\G)$ is contravariantly finite in $\T$; see \cite[Remark 2.5]{Beligiannis:2008} for the case where $\G$ consists of a single object, with the same reason applied to a subcategory. Thus, for each $X\in \T$, there exist $C\in \Add_\T(\G)$ and a morphism $\pi\colon C\rightarrow X$ in $\T$ such that any morphism $G \rightarrow X$ with $G\in \G$ factors through $\pi$. Note that a morphism in $\T$ is $\G$-ghost if and only if it is $\Add_{\T}(\G)$-ghost. Combining this with (1), for each $X\in \T$, there exists an exact triangle
$$
C\xrightarrow\pi X\xrightarrow f X_1\rightarrow \Sigma C,
$$
where $C\in \Add_\T(\G)$ and $f$ is a $\G$-ghost map.

\end{chunk}
\begin{chunk}\label{converse}
\textbf{Converse ghost lemma.} Let $\T$ be a triangulated category with arbitrary direct sums, and let $\G$ be a full subcategory of $\T$ that is closed under suspensions. The ghost lemma is an inequality between the ghost number and the level, specifically $\gin_\T^\G(X)\leq \level^\G_\T(X)$; see \cite[Lemma 2.2]{Beligiannis:2008}.  If, in addition, $\G$ is contravariantly finite, then 
    $$
    \gin^\G_\T(X)=\level^\G_\T(X)
    $$
    for each $X\in \T$; see \cite[Lemma 2.2]{Beligiannis:2008}. We refer to this equality as the converse ghost lemma for $\T$. 
    
  Whether the ghost index and the level coincide in an arbitrary triangulated category $\T$ is still an open question. 
   Dually to the ghost map and the ghost index, there are also notions of the coghost map and the coghost index; see \cite[Section 2]{Letz:2021}. There are also corresponding versions of the converse coghost lemma. 
\end{chunk}

\section{Openness with respect to levels in triangulated categories}\label{localization-section}
Throughout this section, let $R$ be a graded-commutative Noetherian ring,  and let $\mathcal{T}$ be an $R$-linear triangulated category; see \ref{R-linear}. We investigate the property of openness with respect to levels in a triangulated subcategory $\C$ of $\T$. It turns out that this property is well-behaved when $\T$ is compactly generated and $\C\subseteq \T^c$; see Theorem \ref{main}.  As mentioned in the introduction, our result builds on ideas from Letz’s work \cite[Proposition 3.5]{Letz:2021} regarding bounded derived categories and extends her result to our context; see Remark \ref{coincide}.

\begin{chunk}\label{R-linear}
  For objects $X,Y$ in the triangulated category $\T$, set $\Hom_{\mathcal T}^\ast(X,Y)\colonequals\displaystyle\bigoplus_{i\in \mathbb Z}\Hom_\T(X,\Sigma^i Y)$.  Recall that $\T$ is said to be $R$-linear if, for each object $X \in T$, there is a 
homomorphism
$$\varphi_X\colon R \rightarrow \Hom_\T^\ast(X,X)$$
of graded rings
such that the right $R$-action on $\Hom_\T^\ast (X,Y )$ via $\varphi_X$ and the left $R$-action via $\varphi_Y$ are
compatible for $X,Y\in\T$. Specifically, for homogeneous elements $r \in R$ and  $f\in \Hom_\T^\ast (X,Y )$, one has
$$\varphi_Y (r) \circ f = (-1^{|r||f|}f\circ\varphi_X(r).$$
It follows that $\Hom_\T^\ast(X,Y)$ has a graded $R$-module structure with $r\cdot f=\varphi_Y(r)\circ f$.
\end{chunk}
\begin{chunk}\label{localization at p}
    Let $V$ be a specialization closed subset of $\Spec(R)$. That is, if $\p\in V$, then $\q\in V$ if $\p\subseteq \q$. For each triangulated subcategory $\mathcal{C}$ of $\mathcal{T}$, following Benson, Iyengar, and Krause \cite[Section 4]{BIK:2008}, the full subcategory $\mathcal{C}_V$ of $\C$ is defined to be
$$
\mathcal{C}_V\colonequals \{X\in \mathcal{C}\mid \Hom_{\mathcal{T}}^\ast(C,X)_\p=0 \text{ for all }C\in \mathcal{T}^c, ~\p\in \Spec(R)\setminus V\};
$$
see also \cite[Section 2]{BIK:2011}. Note that $\mathcal{C}_V$ is a thick subcategory of $\mathcal{C}$. 

For each $\p$ in $\Spec(R)$, set
$
Z(\p)\colonequals \{\q\in \Spec(R)\mid \q \nsubseteq \p\}.
$
The set $Z(\p)$ is a specialization closed subset of $\Spec(R)$. The \emph{localization} of $\mathcal{C}$ at $\p$ is defined to be
$$
\mathcal{C}_\p\colonequals \mathcal{C}/\mathcal{C}_{Z(\p)}.
$$
\end{chunk}

\begin{chunk}\label{compute C_V}
    Assume $\mathcal{C}\subseteq \mathcal{T}^c$. According to the definition, 
    $$
\mathcal{C}_V= \{X\in \mathcal{C}\mid \Hom_{\mathcal{T}}^\ast(X,X)_\p=0 \text{ for all } \p\in \Spec(R)\setminus V\}
$$
  In this case,  
    the definition of $\mathcal{C}_V$ coincides with that in \cite[Section 3]{BIK:2015}, and hence the definition of $\mathcal{C}_\p$ is also the same.
\end{chunk}

\begin{example}\label{Krause's observation}
Assume $R$ is a commutative Noetherian ring. Let $\mathcal{T}=\K(\Inj R)$ denote the homotopy category of complexes of injective $R$-modules. This is $R$-linear. By taking injective resolutions, Krause \cite[Proposition 2.3]{Krause:2005} observed that there is a triangle equivalence $\D^f_b(R)\cong \K(\Inj R)^c.$ 

 Let $\p$ be a prime ideal of $R$. Since $R$ is Noetherian,  
$
\Hom_{\D^f_b(R)}(X,Y)_\p\cong \Hom_{\D^f_b(R_\p)}(X_\p,Y_\p)
$
for $X,Y\in \D^f_b(R)$.
It follows from this and \ref{compute C_V} that
$
\D^f_b(R)_{Z(\p)}=\{X\in \D^f_b(R)\mid X_\p =0 \text{ in } \D^f_b(R_\p)\}.
$
Matsui {\rm\cite[Lemma 3.2 (2)]{Matsui:2021}} observed that $\D^f_b(R)/\D^f_b(R)_{Z(\p)}\cong \D^f_b(R_\p)$, and hence $\D^f_b(R)_\p\cong \D^f_b(R_\p).$ We will prove the DG version of this result in Proposition \ref{D^f(A)localization}.
\end{example}
For each $X\in\T$ and each homogeneous element $r\in R$, the \emph{Koszul object} of $r$ on $X$,
denoted by $X \para r$, is the object that fits into the exact triangle
$
X\xrightarrow{r} \Sigma^{|r|}X\rightarrow X\para r\rightarrow \Sigma X;
$ 
here, $r\colon X\rightarrow \Sigma^{|r|}X$ is represented by $\varphi_X(r)$ as in \ref{R-linear}. The following result follows from \cite[Lemma 3.5]{BIK:2015}; see also \cite[Lemma 3.1]{Matsui:2021}.

\begin{lemma}\label{iso}
  Let $\mathcal{C}$ be a triangulated subcategory of $\mathcal{T}^c$. For each $\p\in \Spec(R)$,

  \begin{enumerate}
      \item $\mathcal{C}_{Z(\p)}=\thick_\mathcal{T}(X\para r\mid X\in \mathcal{C}, r\notin \p\}$.

      \item The quotient functor $\mathcal{C}\rightarrow \mathcal{C}/\mathcal{C}_{Z(\p)}=\mathcal{C}_\p$ inudces a natural isomorphism
      $$
\Hom_\mathcal{C}^\ast(X,Y)_\p\xrightarrow\cong \Hom_{\mathcal{C}_\p}^\ast(X,Y)
$$
for each $X,Y\in \mathcal{C}$.
  \end{enumerate}
\end{lemma}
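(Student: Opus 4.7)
The plan is to follow the strategy of \cite[Lemma 3.5]{BIK:2015} and \cite[Lemma 3.1]{Matsui:2021}, specialized to the triangulated subcategory $\C\subseteq \T^c$. Throughout I use the characterization recalled in \ref{compute C_V}, namely that $Y\in \C_{Z(\p)}$ iff $\Hom_\T^\ast(Y,Y)_\p=0$.

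For part (1), inclusion $\supseteq$, I would first verify that $X\para r\in \C_{Z(\p)}$ whenever $X\in \C$ and $r\notin \p$. Applying $\Hom_\T^\ast(X\para r,-)$ to the defining triangle $X\xrightarrow{r}\Sigma^{|r|}X\to X\para r\to \Sigma X$ (and the analogous sequence in the other variable), together with graded-commutativity of the $R$-action, shows that $r^2$ annihilates $\Hom_\T^\ast(X\para r, X\para r)$. Consequently this module vanishes after localization at any $\p$ with $r\notin \p$. Since $\C_{Z(\p)}$ is thick in $\C$, this yields $\supseteq$.

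For part (1), inclusion $\subseteq$, let $Y\in \C_{Z(\p)}$. By \ref{compute C_V}, $\Hom_\T^\ast(Y,Y)_\p=0$, so some homogeneous $r\notin \p$ kills $\id_Y$, i.e.\ $r\colon Y\to \Sigma^{|r|}Y$ is the zero morphism in $\T$. The defining Koszul triangle then splits, giving $Y\para r\cong \Sigma^{|r|}Y\oplus \Sigma Y$. Thus $Y$ is a direct summand of $\Sigma^{-1}(Y\para r)$, so $Y\in \thick_\T(Y\para r\mid Y\in \C,\, r\notin \p)$.

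For part (2), I would define the candidate isomorphism $\Hom_\C^\ast(X,Y)_\p\to \Hom_{\C_\p}^\ast(X,Y)$ by sending $f/r$ to the composite $X\xrightarrow{r^{-1}}X\xrightarrow{f}Y$; this makes sense in $\C_\p$ because the cone of $r\colon X\to \Sigma^{|r|}X$ is $X\para r\in \C_{Z(\p)}$ by part (1), so $r$ is inverted in $\C_\p$. Well-definedness and injectivity reduce to the statement: if $[f]=0$ in $\C_\p$ then $f$ factors through some $K\in \C_{Z(\p)}$, and by part (1) some $r\notin \p$ kills $\id_K$, hence kills $f$. Surjectivity uses the Verdier calculus of fractions: any morphism in $\C_\p$ is represented by a left roof $X\xleftarrow{s}X'\xrightarrow{g}Y$ with $\cone(s)\in \C_{Z(\p)}$, and one uses part (1) to produce $r\notin \p$ and a splitting $t\colon X\to X'$ with $s\circ t=r\cdot \id_X$, so the roof is equivalent to $gt/r$.

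The main technical obstacle is the surjectivity step in part (2): extracting from the mere membership $\cone(s)\in \C_{Z(\p)}$ a uniform element $r\notin \p$ that inverts $s$. Because $\C_{Z(\p)}$ is a thick closure of Koszul cones, $\cone(s)$ is only a finite iterated extension/summand of objects $X_i\para r_i$ with $r_i\notin \p$, so the required $r$ must be built (essentially as a product of the $r_i$, with suitable exponents) to simultaneously trivialize every layer of this filtration; this is the place where part (1) is used substantively, and where one has to be careful with the graded-commutative signs in $R$.
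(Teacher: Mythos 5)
Your proposal is correct, but note that the paper offers no proof of this lemma at all: it is imported wholesale from \cite[Lemma 3.5]{BIK:2015} and \cite[Lemma 3.1]{Matsui:2021}, so what you have written is a self-contained reconstruction of the standard argument rather than a variant of anything in the text. Your part (2) is in substance the same fraction calculus that the paper does write out in its proof of Lemma \ref{bijective} (a morphism with zero image factors through an object of the kernel, which some homogeneous $r\notin\p$ annihilates; a roof $X\xleftarrow{s}X'\xrightarrow{g}Y$ is straightened by producing $t$ with $s\circ t=r\cdot\id_X$), only carried out in $\C_\p$ instead of $\T_\p$. Two remarks. First, the ``main technical obstacle'' you flag at the end is self-imposed: since $\cone(s)$ lies in $\C_{Z(\p)}\subseteq\T^c$, the characterization in \ref{compute C_V} hands you a homogeneous $r\notin\p$ with $r\cdot\id_{\cone(s)}=0$ directly, so there is no need to route back through part (1) and reassemble $r$ from the Koszul filtration (though your product-of-annihilators argument for that detour is also valid). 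Second, in part (1) what your two inclusions actually establish is $\C_{Z(\p)}=\thick_\C(X\para r\mid X\in\C,\,r\notin\p)=\C\cap\thick_\T(X\para r\mid X\in\C,\,r\notin\p)$: the splitting $Y\para r\cong\Sigma^{|r|}Y\oplus\Sigma Y$ exhibits $Y$ as a summand, inside $\C$, of an object of $\C$, while thickness of $\C_{Z(\p)}$ in $\C$ only controls the closure formed within $\C$; the equality with $\thick_\T$ exactly as displayed additionally needs $\C$ to be closed under direct summands in $\T$ (as in the paper's main use $\C=\T^c$). This is a wrinkle in the statement as transplanted from the cited sources rather than a flaw in your argument, but it is worth a sentence if you write this up.
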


\begin{chunk}\label{localization map}
    For  each $X \in \T$ and $r\notin \p$, it is routine to prove that $X\para r\in \T_{Z(\p)}$; see the proof of \cite[Lemma 3.1 (b)]{Matsui:2021}. Hence, for $X,Y\in \T$, there is a natural map
$$
    \Phi\colon \Hom_\T^\ast(X,Y)_\p\rightarrow \Hom_{\T_\p}^\ast(X,Y);~~ \frac{f}{r}\mapsto f/r\colonequals (\Sigma^{|r|}X\xleftarrow r X\xrightarrow f \Sigma^{|f|}Y),
$$
where $f/r$ is a right fraction. 
\end{chunk}

\begin{lemma}\label{bijective}
    If $X\in \mathcal T^c$, then the map $\Phi$ in \ref{localization map} is an isomorphism.
\end{lemma}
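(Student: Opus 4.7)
The plan is to prove bijectivity of $\Phi$ by treating surjectivity and injectivity separately, in each case exploiting compactness of $X$ together with the defining property of $\T_{Z(\p)}$: for every $Z\in\T_{Z(\p)}$ and every compact $C\in\T^c$, one has $\Hom_\T^\ast(C,Z)_\p=0$. This ensures that applying $\Hom_\T^\ast(X,-)$ to a triangle whose third term lies in $\T_{Z(\p)}$ yields, after localization at $\p$, an exact sequence in which the contribution of that third term disappears.

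For surjectivity, I would use the description of morphisms $X\to Y$ in $\T_\p=\T/\T_{Z(\p)}$ as right fractions $X\xleftarrow{s}X'\xrightarrow{g}Y$ with $\cone(s)\in\T_{Z(\p)}$. Embedding $s$ in a triangle $X'\xrightarrow{s}X\xrightarrow{\pi}\cone(s)\to\Sigma X'$ and applying $\Hom_\T^\ast(X,-)_\p$, the term $\Hom_\T^\ast(X,\cone(s))_\p$ vanishes, so $s_\ast\colon\Hom_\T^\ast(X,X')_\p\to\Hom_\T^\ast(X,X)_\p$ is surjective. Lifting $\id_X/1$ and clearing denominators produces $u'\colon X\to X'$ and $r'\notin\p$ with $s\circ u'=r'\cdot\id_X$; then setting $f:=g\circ u'$, the morphism $u'$ is an explicit witness that $(X\xleftarrow{r'}X\xrightarrow{f}Y)$ and $(X\xleftarrow{s}X'\xrightarrow{g}Y)$ represent the same class in $\T_\p$, so $\Phi(f/r')$ equals the given morphism.

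For injectivity, suppose $\Phi(f/r)=0$ for some $f\in\Hom_\T(X,\Sigma^n Y)$ and $r\notin\p$. By the calculus of right fractions in the Verdier quotient, there exists $t\colon X''\to X$ with $\cone(t)\in\T_{Z(\p)}$ and $f\circ t=0$ in $\T$. Completing $t$ to a triangle $X''\xrightarrow{t}X\xrightarrow{u}\cone(t)\to\Sigma X''$, the condition $f\circ t=0$ yields a factorization $f=\bar f\circ u$ for some $\bar f$. Compactness of $X$ and the property of $\T_{Z(\p)}$ force $\Hom_\T^\ast(X,\cone(t))_\p=0$, so some $r''\notin\p$ satisfies $r''\cdot u=0$ in $\Hom_\T^\ast(X,\cone(t))$; postcomposing with $\bar f$ and invoking the $R$-linearity conventions of \ref{R-linear} gives $r''\cdot f=0$ in $\Hom_\T^\ast(X,Y)$, hence $f/r=0$.

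The main technical obstacle I anticipate is the bookkeeping between the elementwise description of the module localization $\Hom_\T^\ast(X,Y)_\p$ and the roof description of morphisms in $\T/\T_{Z(\p)}$. In particular, one must carefully verify that the two fractions produced in the surjectivity step genuinely define the same class via the explicit witness $u'$, and that the sign conventions built into the right- and left-$R$-actions of \ref{R-linear} propagate correctly when one passes from the vanishing $r''\cdot u=0$ to the vanishing $r''\cdot f=0$ via the factorization $f=\bar f\circ u$. Once these conventions are nailed down, the rest of the argument is a direct consequence of compactness of $X$, the long exact sequences attached to the triangles containing $s$ and $t$, and the definition of $\T_{Z(\p)}$.
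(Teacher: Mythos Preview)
Your proposal is correct and follows essentially the same argument as the paper's proof: both treat injectivity and surjectivity separately, and in each case complete the relevant map ($t$ or $s$) to a triangle, use compactness of $X$ to see that $\Hom_\T^\ast(X,\cone(-))_\p=0$, and extract from this an element $r''\notin\p$ (resp.\ $r'\notin\p$) which annihilates the map into the cone and hence either kills $f$ or produces the section $u'$ with $s\circ u'=r'\cdot\id_X$. The paper carries this out directly at the element level rather than via the long exact sequence phrasing, and presents injectivity before surjectivity, but the substance is identical; your flagged concern about signs in \ref{R-linear} is harmless since the relation $\varphi_Y(r)\circ f=(-1)^{|r||f|}f\circ\varphi_X(r)$ only introduces a sign, so $r''\cdot u=0$ still forces $r''\cdot f=0$.
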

\begin{proof}
    Assume $\Phi(\frac{f}{r})=0$. Then there exists a map $\alpha\colon Z\rightarrow X$ in $\T$ such that $\rm{cone}(\alpha)\in \T_{Z(\p)}$ and $f\circ \alpha=0$. Consider the exact triangle
    $$
Z\xrightarrow \alpha X\xrightarrow \beta {\rm{cone}}(\alpha) \rightarrow \Sigma Z.
$$
Since $f\circ \alpha=0$, $f$ factors through $\beta$. Combining $\rm{cone}(\alpha)\in \T_{Z(\p)}$ and $X\in \T^c$, it follows from the definition of $\T_{Z(\p)}$ that $\frac{\beta}{1}=0$ in $\Hom_\T^\ast(X,{\rm cone}(\alpha))_\p$. Thus, there exists a homogeneous element $s\notin \p$ such that $s\cdot \beta=0$. Since $f$ factors through $\beta$, we get that $s\cdot f=0$. In particular, $\frac{f}{r}=0$, and hence $\Phi$ is injective.

Let $X\xleftarrow t M\xrightarrow g \Sigma^i Y$ be a morphism in $\Hom_{\T_\p}^\ast(X,Y)$. Consider the exact triangle
$$
M\xrightarrow t X\xrightarrow \gamma {\rm cone}(t) \rightarrow \Sigma M.
$$
By the choice of $t$, ${\rm cone}(t)\in \T_{Z(\p)}$. With the same argument of the existence of $s\notin\p$ as above, there exists a homogeneous element $a\notin \p$ such that $a\cdot \gamma=0$. It follows that $\gamma \circ (a\cdot  {\rm id}_X)=0$; see \ref{R-linear}. Combining with this, we conclude from the exact triangle that there exists $\eta\colon \Sigma^{-|a|}X\rightarrow M$ such that $t\circ \eta=a\cdot {\rm id}_X$. This implies $\Phi(\frac{g\circ \eta}{a})=(g\circ \eta)/ (t\circ \eta)=g/t$, and hence $\Phi$ is surjective. This completes the proof. 
\end{proof}
 \begin{chunk}\label{cofinal}
Let $\C$ be a triangulated category, and let $\C^\prime, \mathcal D$ be triangulated subcategories of $\C$. Denote $\mathcal D^\prime=\C^\prime\cap \mathcal D$. Suppose that each morphism $D\rightarrow C^\prime$ with $D\in \mathcal D$ and $C^\prime\in \C^\prime$ factors through an object in $\mathcal D^\prime$. Then the induced exact functor 
$$
\C^\prime/\mathcal D^\prime\longrightarrow \C/\mathcal D
$$ 
is fully faithful; see \cite[Chapter 2]{Verdier1977}.
\end{chunk}
\begin{chunk}\label{admitdirectsum}
    Let $\C$ be a triangulated category with arbitrary direct sums. A triangulated subcategory of $\C$ is called \emph{localizing} if it is closed under direct sums. 
    
    If $\mathcal D$ is a localizing subcategory of $\T$, then $\C/\mathcal D$ has arbitrary direct sums and the quotient functor $\C\rightarrow \C/\mathcal D$ preserves direct sums; see \cite[Corollary 3.2.11]{Neeman-book}. 
   Consequently, for an $R$-linear triangulated triangulated category $\T$ with arbitrary direct sums, $\T_\p$ has arbitrary direct sums as $\T_{Z(\p)}$ is a localizing subcategory of $\T$. 
\end{chunk}
\begin{proposition}\label{embedding}
    Let $\T$ be an $R$-linear triangulated category and $\p$ be a prime ideal of $R$. 
    \begin{enumerate}
        \item If $\C$ is a triangulated subcategory of $ \T^c$, then
 the inclusion $\mathcal{C}\rightarrow \T$ induces a fully faithful functor $$\C_\p\rightarrow \T_\p.$$

 \item Assume $\T$ is compactly generated. Then both $\T_{Z(\p)}$ and $\T_\p$ are compactly generated.  Moreover, the quotient functor  $\T\rightarrow \T_\p$ preserves compact objects and it induces a triangle equivalence (up to direct summands)
    $$
   (\T^c)_\p\xrightarrow \cong (\T_\p)^c.
    $$
    \end{enumerate}
\end{proposition}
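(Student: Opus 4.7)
For part (1), the plan is to identify the relevant morphism sets. For $X, Y \in \C \subseteq \T^c$, Lemma \ref{iso}(2) provides an isomorphism $\Hom_\C^*(X, Y)_\p \cong \Hom_{\C_\p}^*(X, Y)$; because $\C$ is full in $\T$, the left side equals $\Hom_\T^*(X, Y)_\p$, and because $X \in \T^c$, Lemma \ref{bijective} identifies this with $\Hom_{\T_\p}^*(X, Y)$. The composite isomorphism is precisely the action on $\Hom$-sets of the functor induced by the inclusion $\C \hookrightarrow \T$ (which carries $\C_{Z(\p)}$ into $\T_{Z(\p)}$ and so factors through the Verdier quotients), so $\C_\p \to \T_\p$ is fully faithful.

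For part (2), I would first produce compact generators of $\T_{Z(\p)}$. Fix a set $S \subseteq \T^c$ of compact generators of $\T$ and consider
$$
S_\p \colonequals \{G \para r \mid G \in S,\ r \in R \text{ homogeneous},\ r \notin \p\}.
$$
Each $G \para r$ is compact in $\T$ (as $\T^c$ is thick) and lies in $\T_{Z(\p)}$ by Lemma \ref{iso}(1); since $\T_{Z(\p)}$ is localizing, its inclusion into $\T$ preserves coproducts, so $G \para r$ is also compact in $\T_{Z(\p)}$. To check that $S_\p$ generates $\T_{Z(\p)}$, let $Y \in \T_{Z(\p)}$ satisfy $\Hom_\T^*(G \para r, Y) = 0$ for all $G \in S$ and all homogeneous $r \notin \p$. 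Applying $\Hom_\T(-, Y)$ to the triangle $G \xrightarrow{r} \Sigma^{|r|} G \to G \para r \to \Sigma G$ yields a long exact sequence which shows that multiplication by $r$ is an isomorphism on $\Hom_\T^*(G, Y)$; hence $\Hom_\T^*(G, Y) \to \Hom_\T^*(G, Y)_\p$ is an isomorphism. But $Y \in \T_{Z(\p)}$ together with $G \in \T^c$ forces $\Hom_\T^*(G, Y)_\p = 0$, so $\Hom_\T^*(G, Y) = 0$. Since $S$ generates $\T$, this gives $Y = 0$.

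Once $\T_{Z(\p)}$ is known to be compactly generated by objects in $\T^c$, Neeman's localization theorem for compactly generated triangulated categories (cf.\ \cite{Neeman-book}) supplies the rest: the quotient $\T_\p = \T/\T_{Z(\p)}$ is compactly generated, $\T \to \T_\p$ sends $\T^c$ into $(\T_\p)^c$, and the induced functor $\T^c / (\T^c \cap \T_{Z(\p)}) \to (\T_\p)^c$ is fully faithful with dense image, hence a triangle equivalence up to direct summands. The remaining identification $\T^c \cap \T_{Z(\p)} = (\T^c)_{Z(\p)}$ follows from \ref{compute C_V} together with the observation that if $\Hom_\T^*(X,X)_\p = 0$ for $X \in \T^c$, then $r \cdot \id_X = 0$ for some $r \notin \p$, so the Koszul triangle splits and $X$ is a direct summand of $X \para r \in \T_{Z(\p)}$. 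This yields $\T^c/(\T^c \cap \T_{Z(\p)}) = (\T^c)_\p$, completing the proof. The main obstacle is the compact generation of $\T_{Z(\p)}$: locating the correct generating set (the Koszul objects $G \para r$) and controlling the action of $R \setminus \p$ on the $\Hom$-modules is where the work lies, after which Neeman's theorem applies essentially formally.
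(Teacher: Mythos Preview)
Your proof is correct and follows essentially the same route as the paper: part~(1) is identical, and for part~(2) both arguments establish that $\T_{Z(\p)}$ is compactly generated by the Koszul objects on $\T^c$ and then invoke Neeman's localization theorem. The only differences are that you prove the compact generation directly (the paper cites \cite[Proposition 2.7]{BIK:2011}), and your Koszul-splitting argument for $\T^c \cap \T_{Z(\p)} = (\T^c)_{Z(\p)}$ is unnecessary since this equality is immediate from the definition in \ref{localization at p}.
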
    
\begin{proof}
  (1) For each $X,Y\in \C$, the statement follows immediately from the commutative diagram
    $$
    \xymatrix{
\Hom_{\C_\p}(X,Y)_\p\ar[r]^-\cong\ar@{=}[d]& \Hom_{\C_\p}(X,Y)\ar[d]\\
\Hom_{\T}(X,Y)_\p\ar[r]^-\cong & \Hom_{\T_\p}(X,Y),
}
    $$
where the above two isomorphisms follow from Lemma \ref{iso} and Lemma \ref{bijective} respectively.

(2)  By \cite[Proposition 2.7]{BIK:2011},   $\T_{Z(\p)}$ is compactly generated by the set $\{X\para r\mid X\in \T^c, r\notin\p\}$. Combining with Lemma \ref{iso}, we conclude that $(\T^{c})_{Z(\p)}=(\T_{Z(\p)})^c$. 
Since $X\para r\in \T^c$ for each $X\in \T^c $, $(\T_{Z(\p)})^c\subseteq \T^c$.
By \cite[Theorem 2.1]{Neeman-norm},  $\T_\p=\T/\T_{Z(\p)}$ is compactly generated, the quotient functor  $\T\rightarrow \T/\T_{Z(\p)}$ preserves compact objects, and the quotient functor $\T\rightarrow \T/\T_{Z(\p)}$ induces a triangle equivalence (up to direct summands)
    $$
    \T^c/(\T_{Z(\p)})^c\xrightarrow\cong (\T/\T_{Z(\p)})^c.
    $$
Note that $\T^c/(\T_{Z(\p)})^c=(\T^c)_\p$ as $(\T^c)_{Z(\p)}=(\T_{Z(\p)})^c$. This completes the proof.
\end{proof}

\begin{chunk}\label{Adams}
 Let $\T$ be an $R$-linear triangulated category with arbitrary direct sums. For each $X\in \T$ and a full subcategory $\G$ of $\T$, it follows from \ref{contravariantly finite} that there exists a sequence of morphisms 
$$
X=X_0\xrightarrow {f_0} X_1\xrightarrow {f_1}X_2 \xrightarrow {f_2}\cdots
$$
in $\T$ such that, for each $i\geq 0$, $f_i$ is a $\G$-ghost and the cone of $f_i$ belongs to $\Add_{\T}(\G)$. 
Such a sequence of $\G$-ghost morphisms as above is called an \emph{Adams resolution} of $X$; see \cite[Section 4]{Christensen:1998}.
\end{chunk}
\begin{chunk}\label{gin}
  Using the notation as \ref{Adams}, assume that the sequence  $
X\xrightarrow {f_0} X_1\xrightarrow {f_1}X_2 \xrightarrow {f_2}\cdots
$ is an Adams resolution of $X$. By \cite[Lemma 3.2.1 and 3.4.3]{Letz:2020},
$$
\gin^\G_\T(X)={\rm inf}\{n\geq 0\mid f_{n-1}\circ f_{n-2}\circ \cdots \circ f_0=0\};
$$
see the definition of the ghost index $\gin_\T^\G(X)$ in \ref{def of ghost index}.
 Indeed, this follows because, for any sequence of $\G$-ghost morphisms $
X\xrightarrow {g_0} Y_1\xrightarrow {g_1}Y_2 \xrightarrow {g_2}\cdots
$ in $\T$, there exists a commutative diagram
$$
\xymatrix{
X\ar@{=}[d]\ar[r]^-{f_0}& X_1\ar[r]^-{f_1}\ar@{-->}[d]^-\exists & X_2\ar[r]^-{f_2}\ar@{-->}[d]^-\exists & \cdots \\
X\ar[r]^-{g_0}& Y_1\ar[r]^-{g_1}& Y_2\ar[r]^-{g_2}& \cdots.
}
$$
The existence of vertical maps is guaranteed as the cone of each $f_i$ is in $\Add_\T(\G)$ and each $g_i$ is a $\G$-ghost.
\end{chunk}
\begin{lemma}\label{localAdams}
Let $
X\xrightarrow {f_0} X_1\xrightarrow {f_1}X_2 \xrightarrow {f_2}\cdots$
be an Adams resolution of $X$ as \ref{Adams}, and assume that $\G\subseteq  \T^c$. For each $\p\in \Spec(R)$ and $i\geq 1$, the map $f_i/1\colon X_i\rightarrow X_{i+1}$ is a $\G$-ghost map in $\T_\p$, and the cone of $f_i/1$ is in $\Add_{\T_\p}(\G)$. In particular, the sequence $$
X\xrightarrow {f_0/1} X_1\xrightarrow {f_1/1}X_2\xrightarrow {f_2/1} \cdots
$$
is an Adams resolution of $X$ in $\T_\p$.
\end{lemma}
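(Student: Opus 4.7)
The plan is to verify the two required properties of each $f_i/1$ by transporting them through the quotient functor $Q\colon \T \to \T_\p$. The essential inputs are (a) the natural isomorphism of Lemma \ref{bijective}, valid for compact sources, and (b) the fact, recorded in \ref{admitdirectsum}, that $\T_{Z(\p)}$ is a localizing subcategory of $\T$, so that $Q$ is exact and preserves arbitrary direct sums.

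First I would handle the $\G$-ghost property. Because $\G \subseteq \T^c$, Lemma \ref{bijective} supplies, for every $G \in \G$ and $Y \in \T$, a natural isomorphism $\Hom_\T^\ast(G, Y)_\p \cong \Hom_{\T_\p}^\ast(G, Y)$. Applied to $f_i \colon X_i \to X_{i+1}$, naturality identifies $\Hom_{\T_\p}^\ast(\Sigma^j G, f_i/1)$ with the localization at $\p$ of $\Hom_\T^\ast(\Sigma^j G, f_i)$ for every $j \in \Z$ and $G \in \G$. Since $f_i$ is a $\G$-ghost in $\T$, the latter map vanishes, so its localization also vanishes, and hence $f_i/1$ is a $\G$-ghost in $\T_\p$.

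Next I would address the cone. Closure of $\T_{Z(\p)}$ under direct sums follows from its definition, since each $\Hom_\T^\ast(C, -)$ with $C \in \T^c$ commutes with direct sums and localization at $\p$ is exact; hence $Q$ is exact and sum-preserving by \ref{admitdirectsum}. Exactness gives $\cone(f_i/1) \cong Q(\cone(f_i))$ in $\T_\p$, while sum- and suspension-preservation shows $Q(\Add_\T(\G)) \subseteq \Add_{\T_\p}(\G)$. Combining these yields $\cone(f_i/1) \in \Add_{\T_\p}(\G)$. Putting the pieces together, applying $Q$ termwise to the given Adams resolution produces a sequence of $\G$-ghost morphisms whose cones lie in $\Add_{\T_\p}(\G)$, which by definition is an Adams resolution of $X$ in $\T_\p$.

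The step I would flag as most in need of care is verifying that $\T_{Z(\p)}$ is genuinely localizing (rather than merely thick), since the entire argument hinges on exactness and sum-preservation of $Q$; beyond that, everything reduces to naturality of the isomorphism of Lemma \ref{bijective} and the elementary behaviour of $\Add$ under an exact, sum- and suspension-preserving functor.
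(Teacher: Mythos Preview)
Your argument is correct and follows essentially the same route as the paper: both use Lemma~\ref{bijective} (with $\G\subseteq\T^c$) to transport the $\G$-ghost property, and both invoke the fact that $\T_{Z(\p)}$ is localizing together with~\ref{admitdirectsum} to ensure the quotient functor is exact and sum-preserving, whence cones land in $\Add_{\T_\p}(\G)$. The only difference is that you supply a brief justification for why $\T_{Z(\p)}$ is closed under direct sums, which the paper simply asserts (relying on the comment already made in~\ref{admitdirectsum}).
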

\begin{proof}
   Since $\G\subseteq \T^c$, it follows from Lemma \ref{bijective} that $f_i/1$ is a $\G$-ghost map in $\T_\p$ for each $i\geq 0$. Let $\Cone(f_i)$ denote the cone of $f_i$ in $\T$. By definition of the Adams resolution, we have $\Cone(f_i)\in \Add_\T(\G)$. Noting that $\T_{Z(\p)}$ is a localizing subcategory of $\T$, we apply \ref{admitdirectsum} to conclude that $\T_\p$ has arbitrary direct sums, and that the quotient functor $\T\rightarrow \T_\p$ preserves direct sums. Thus, $\Cone(f_i)\in \Add_{\T_\p}(\G)$. In $\T_\p$, $\Cone(f_i/1)\cong \Cone(f_i)$, and hence $\Cone(f_i/1)\in \Add_{\T_\p}(\G)$.
\end{proof}
Let $\mathcal{T}_1$ be a thick subcategory of a triangulated category $\T_2$. For each $X\in \mathcal{T}_1$ and any subcategory $\G$ of $\mathcal T_1$, there is an equality $\level_{\T_1}^{\mathcal{G}}(X)=\level^{\mathcal G}_{\T_2}(X)$; see \cite[Lemma 2.4]{ABIM:2010}. Proposition \ref{equality} extends this to any fully faithful embedding of triangulated categories. Before this, we need the following lemma. It should be noted that the essential image of a fully faithful functor $\T_1\rightarrow T_2$ between the triangulated categories is not necessarily thick in $\T_2$. 
\begin{lemma}\label{basic}
    Let $F\colon \mathcal T_1\rightarrow \T_2$ be a fully faithful functor between triangulated categories. 
    \begin{enumerate}
        \item For a full subcategory $\G$ of $\mathcal{T}_1$ and $n\geq 0$, if $X\in \thick^n_{\T_2}(F(\G))$, then there exists $X^\prime \in \thick_{\T_1}^n(\G)$ such that $X$ is a direct summand of $F(X^\prime)$ in $\T_2$.

        \item For $X,Y\in \mathcal C$, if $F(X)$ is a direct summand of $F(Y)$ in $\T_2$, then $X$ is a direct summand of $Y$ in $\mathcal{T}_1$.
    \end{enumerate}
\end{lemma}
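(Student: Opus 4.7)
The plan is to prove (2) as an immediate consequence of full faithfulness, and to prove (1) by induction on $n$, using (2) together with a lifting argument for the connecting morphism via full faithfulness, and the fact that the cone of a block-diagonal morphism in a triangulated category decomposes.

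For part (2), given a splitting witnessed by morphisms $i\colon F(X)\to F(Y)$ and $p\colon F(Y)\to F(X)$ in $\T_2$ with $p\circ i=\id_{F(X)}$, full faithfulness produces unique preimages $\bar{i}\colon X\to Y$ and $\bar{p}\colon Y\to X$ in $\T_1$. Then $F(\bar{p}\circ\bar{i})=p\circ i=\id_{F(X)}=F(\id_X)$, and faithfulness gives $\bar{p}\circ\bar{i}=\id_X$.

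For part (1), the base case $n=1$ is direct: if $X$ is a direct summand of $\bigoplus_i \Sigma^{a_i}F(G_i)$ with $G_i\in\G$, take $X':=\bigoplus_i\Sigma^{a_i}G_i\in\thick_{\T_1}^1(\G)$; since $F$ is exact it commutes with finite direct sums and suspensions, so $F(X')\cong \bigoplus_i\Sigma^{a_i}F(G_i)$ and $X$ is a direct summand of $F(X')$. For the inductive step, given $X\in\thick_{\T_2}^n(F(\G))$ with defining triangle
\[
A_1\longrightarrow X\oplus X''\longrightarrow A_2\xrightarrow{\ g\ }\Sigma A_1,
\]
pick by the base case $B_1\in\thick_{\T_1}^1(\G)$ and a splitting $F(B_1)\cong A_1\oplus A_1'$, and by the inductive hypothesis $B_2\in\thick_{\T_1}^{n-1}(\G)$ and a splitting $F(B_2)\cong A_2\oplus A_2'$. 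Extend $g$ by zero on the extra summand to obtain
\[
\tilde{g}=\begin{pmatrix} g & 0 \\ 0 & 0 \end{pmatrix}\colon F(B_2)\cong A_2\oplus A_2'\longrightarrow \Sigma A_1\oplus \Sigma A_1'\cong F(\Sigma B_1).
\]
Full faithfulness lifts $\tilde{g}$ to a unique morphism $h\colon B_2\to \Sigma B_1$ in $\T_1$. Let $Z$ be the fiber of $h$, fitting in the triangle
\[
B_1\longrightarrow Z\longrightarrow B_2\xrightarrow{\ h\ }\Sigma B_1,
\]
so that $Z\in\thick_{\T_1}^n(\G)$ (taking the "$X'$" in the definition of $\thick^n$ to be zero). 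Applying $F$ and using that the fiber of a block-diagonal morphism between direct sums is the direct sum of the fibers of the blocks, one gets $F(Z)\cong (X\oplus X'')\oplus (A_1'\oplus A_2')$, since the fiber of $g$ is $X\oplus X''$ and the fiber of the zero morphism $A_2'\to \Sigma A_1'$ is $A_1'\oplus A_2'$. Hence $X$ is a direct summand of $F(Z)$, completing the induction.

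The only real obstacle is the inductive step in (1); it decomposes into (i) lifting the connecting morphism after padding by zero, which is exactly where full faithfulness is used, and (ii) the standard direct-sum decomposition of the cone of a block-diagonal morphism, which is a formal property of any triangulated category. Both steps are routine, so I do not anticipate deeper difficulties.
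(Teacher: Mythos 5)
Your proof is correct and follows essentially the same route as the paper: induct on $n$, pad the two outer terms of the defining triangle to objects of the form $F(B_1)$, $F(B_2)$, lift the zero-padded connecting morphism through the fully faithful functor, and identify the resulting cone (the paper does this by summing with trivial triangles and comparing cones, you by using that a direct sum of triangles is a triangle — a purely cosmetic difference), while part (2) is the same full-faithfulness argument, with the retraction lifted directly instead of showing the connecting map vanishes. The only step left implicit is that a split monomorphism in a triangulated category yields a direct sum decomposition, which is standard.
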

\begin{proof}
    (1)  We prove the statement by induction on $n$. The case when $n=0$ is trivial. For the case $n=1$, the assumption implies that $X$ is a direct summand of a finite direct sum of objects of the form $\Sigma^iF(G)^{\beta_i}$, where $G\in \G$ and $\beta_i\geq 0$. The desired result follows as $\Sigma^iF(G)^{\beta_i}\cong F(\Sigma^iG^{\beta_i})$.
    
      
Assume $n>1$ and the desired result holds if $\level^{F(\G)}_{\T_2}(X)\leq n-1$. Now, suppose $\level_{\T_2}^{F(\G)}(X)=n$. Then there exists an exact triangle
  $
M\rightarrow Y\rightarrow N\rightarrow \Sigma M
$
in ${\T_2}$, where $M\in \thick^{n-1}_{\T_2}(F(\G))$, $N\in \thick^1_{\T_2}(F(\G))$, and $X$ is a direct summand of $Y$. By induction, there exists $M^\prime \in \thick_{\mathcal {T}_1}^{n-1}(\G)$ and $N^\prime \in \thick^1_{\mathcal T_1}(\G)$ such that $M$ is a direct summand of $F(M^\prime)$ and $N$ is a direct summand of $F(N^\prime)$. By taking direct sums with exact triangles like $X_1\xrightarrow {\id} X_1\rightarrow 0\rightarrow \Sigma X_1$ and $0\rightarrow X_2\xrightarrow {\id} X_2\rightarrow 0$ for some $X_1,X_2\in \T_2$, there exists an exact triangle
\begin{equation}\label{triangle}
    F(M^\prime)\rightarrow Z\rightarrow F(N^\prime)\xrightarrow \alpha \Sigma F(M^\prime)
\end{equation}
and $Y$ is a direct summand of $Z$. It follows that $X$ is a direct summand of $Z$. Since $F$ is fully faithful, there exists $\beta\colon N^\prime\rightarrow \Sigma M^\prime$ such that $F(\beta)$ is the composition $F(X^\prime)\xrightarrow {\alpha} \Sigma F(M^\prime)\cong \Sigma F(M^\prime)$. Consider the exact triangle 
$
M^\prime\rightarrow W\rightarrow N^\prime\xrightarrow \beta \Sigma M^\prime
$
in $\T_1$. By comparing this exact triangle with (\ref{triangle}), we have $F(W)\cong Z$. Note that $W\in \thick^n_{\T_1}(\G)$ and $X$ is a direct summand of $Z\cong F(W)$. 

    (2) By assumption, there is a split exact triangle
    \begin{equation}\label{splittriangle}
        F(X)\xrightarrow i F(Y)\xrightarrow \pi M\xrightarrow 0 \Sigma F(X)
    \end{equation}
    in ${\T_2}$. 
    Since $F$ is fully faithful, there exists $i^\prime\colon X\rightarrow Y$ such that $F(i^\prime)=i$. Comparing the exact triangle
    \begin{equation}\label{origin}
     X\xrightarrow {i^\prime} Y \xrightarrow {\pi^\prime}{\rm cone}(i^\prime)\xrightarrow \alpha \Sigma X  
    \end{equation}
with (\ref{splittriangle}), we conclude that $F(\alpha)=0$. Hence, $\alpha=0$ as $F$ is fully faithful. This implies that the exact triangle (\ref{origin}) is split. In particular, $X$ is a direct summand of $Y$ in $\T_1$.
\end{proof}

\begin{proposition}\label{equality}
    Let $F\colon \T_1\rightarrow {\T_2}$ be a fully faithful functor between triangulated categories. For each $X\in \T_1$ and a full subcategory $\G\subseteq \T_1$, 
    $$
    \level^\G_{\T_1}(X)=\level^{F(\G)}_{{\T_2}}(F(X)).
    $$
\end{proposition}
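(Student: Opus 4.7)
The plan is to derive both inequalities from what has already been established. The easy direction $\level^{F(\G)}_{\T_2}(F(X)) \leq \level^\G_{\T_1}(X)$ is immediate from the functoriality of level under exact functors recalled in \ref{level}, applied to $F$, and requires no further work; in particular the right-hand side being $\infty$ makes the inequality trivial.

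For the reverse inequality $\level^\G_{\T_1}(X) \leq \level^{F(\G)}_{\T_2}(F(X))$, I would assume the right-hand side is finite, say equal to $n$, so that $F(X) \in \thick^n_{\T_2}(F(\G))$. Applying Lemma \ref{basic}(1) to $F(X)$ produces an object $X' \in \thick^n_{\T_1}(\G)$ such that $F(X)$ is a direct summand of $F(X')$ in $\T_2$. Then Lemma \ref{basic}(2) transfers this back: $X$ is a direct summand of $X'$ in $\T_1$.

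Finally, I would observe that by the inductive construction of $\thick^n_{\T_1}(\G)$ recorded in the excerpt—the defining triangle $X_1 \to X \oplus X' \to X_2 \to \Sigma X_1$ being symmetric in its two summands—the class $\thick^n_{\T_1}(\G)$ is closed under direct summands for every $n$. Consequently $X \in \thick^n_{\T_1}(\G)$, so $\level^\G_{\T_1}(X) \leq n$, finishing the proof.

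There is no real obstacle here since the substantive work has been discharged in Lemma \ref{basic}; the only point requiring a brief remark is the closure of $\thick^n_{\T_1}(\G)$ under direct summands, which is built into the inductive definition given earlier in the paper.
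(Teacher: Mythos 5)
Your proposal is correct and follows essentially the same route as the paper: the easy inequality from functoriality of level, and the reverse inequality by combining Lemma \ref{basic}(1) and \ref{basic}(2) to realize $X$ as a direct summand of some $X'\in\thick^n_{\T_1}(\G)$. The only difference is that you spell out explicitly the closure of $\thick^n_{\T_1}(\G)$ under direct summands, which the paper leaves implicit in its final step; this is a correct and harmless addition.
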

\begin{proof}
  By \ref{level},  $\level^\G_{\T_1}(X)\geq \level^{F(\G)}_{{\T_2}}(F(X))$. It remains to prove the converse $\level^\G_{\T_1}(X)\leq \level^{F(\G)}_{{\T_2}}(F(X))$. We may assume that $\level_{\T_2}^{F(\G)}(F(X))=n$ is finite. Hence, $F(X)\in \thick^n_{\T_2}(F(\G))$. 
By Lemma \ref{basic}, $F(X)$ is a direct summand of $F(X^\prime)$ for some $X^\prime\in \thick^n_{\T_1}(\G)$. Again by Lemma \ref{basic}, $X$ is a direct summand of $X^\prime$ in $\T_1$. We conclude that  $\level^\G_{\T_1}(X)\leq n$. 
\end{proof}

The following version of the converse ghost lemma will be used in the proof of Proposition \ref{small-big}.
\begin{lemma}\label{converse-compact}
     Let $\T$ be a triangulated category with arbitrary direct sums.  For each $X\in \T^c$ and a full subcategory $\G\subseteq \T^c$, 
    $$
    \gin^\G_\T(X)=\level^\G_\T(X).
   $$ 
\end{lemma}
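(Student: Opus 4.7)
The plan is to reduce to the converse ghost lemma \ref{converse} applied to the auxiliary subcategory $\Add_\T(\G)$, and then to perform a compactness-descent. Replacing $\G$ by its closure under suspensions (which leaves both $\gin^\G_\T(X)$ and $\level^\G_\T(X)$ unchanged), the ghost lemma in \ref{converse} gives $\gin^\G_\T(X) \leq \level^\G_\T(X)$. For the reverse inequality, by \ref{contravariantly finite}(2) the category $\Add_\T(\G)$ is contravariantly finite, closed under suspensions, and shares its ghost morphisms with $\G$; hence the converse ghost lemma applied to $\Add_\T(\G)$ yields
$$\gin^\G_\T(X) = \gin^{\Add_\T(\G)}_\T(X) = \level^{\Add_\T(\G)}_\T(X).$$
Since $\level^{\Add_\T(\G)}_\T(X) \leq \level^\G_\T(X)$ trivially, it remains to show that any compact $X \in \thick^n_\T(\Add_\T(\G))$ already lies in $\thick^n_\T(\G)$.

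I would establish this via the following \emph{compact descent lemma}, proved by induction on $k$: for every $X \in \T^c$ and every morphism $X \to Z$ with $Z \in \thick^k_\T(\Add_\T(\G))$, there exist a compact $\tilde Z \in \thick^k_\T(\G)$ and a morphism $\tilde Z \to Z$ through which $X \to Z$ factors. Applying this to an object $Z$ of which $X$ is a direct summand (with retraction $\pi\colon Z \to X$), composing $\tilde Z \to Z$ with $\pi$ produces a retraction of the induced map $X \to \tilde Z$, so $X$ becomes a direct summand of $\tilde Z \in \thick^n_\T(\G)$, giving $\level^\G_\T(X) \leq n$ as desired.

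In the base case $k=1$, $Z$ is a direct summand of some $A = \bigoplus_\alpha \Sigma^{k_\alpha} G_\alpha$, and compactness of $X$ forces the composite $X \to Z \hookrightarrow A$ to factor through a finite subsum, which composed with the retraction $A \to Z$ yields the desired factorization through $\tilde Z \in \thick^1_\T(\G)$. For the inductive step $k \geq 2$, after suitable adjustments we may assume $Z$ fits in a triangle $Z_1 \to Z \to Z_2 \to \Sigma Z_1$ with $Z_1 \in \Add_\T(\G)$ and $Z_2 \in \thick^{k-1}_\T(\Add_\T(\G))$. The inductive hypothesis applied to the composite $X \to Z \to Z_2$ furnishes a compact $\tilde Z_2 \in \thick^{k-1}_\T(\G)$ through which it factors. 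Constructing the homotopy-pullback triangle $Z_1 \to \bar Z \to \tilde Z_2 \to \Sigma Z_1$ via TR3, the morphism $X \to Z$ lifts to $X \to \bar Z$ because the obstruction equals the composite $X \to \tilde Z_2 \to Z_2 \to \Cone(\tilde Z_2 \to Z_2)$, which vanishes in a distinguished triangle. Since $\tilde Z_2$ is now compact, the connecting map $\tilde Z_2 \to \Sigma Z_1$ factors through a finite subsum $\Sigma \tilde Z_1 \hookrightarrow \Sigma Z_1$, producing a triangle $\tilde Z_1 \to \tilde Z \to \tilde Z_2 \to \Sigma \tilde Z_1$ together with a comparison map $\tilde Z \to \bar Z$.

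The main obstacle is the final lift of $X \to \bar Z$ through $\tilde Z \to \bar Z$. The obstruction, identified via the octahedral axiom with a morphism $X \to Z_1/\tilde Z_1$, is not automatically zero. The remedy is that it lifts along $Z_1 \to Z_1/\tilde Z_1$ in the long exact sequence to some $\lambda\colon X \to Z_1$, and compactness of $X$ forces $\lambda$ to factor through a finite subsum of $Z_1$; enlarging $\tilde Z_1$ to absorb this subsum makes the obstruction vanish (via the natural quotient map $Z_1/\tilde Z_1 \to Z_1/\tilde Z_1'$ after enlargement) and yields the desired lift, completing the compact descent.
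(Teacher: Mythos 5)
Your first two reductions are exactly the paper's: the paper also observes that $\G$-ghosts coincide with $\Add_\T(\G)$-ghosts, applies the converse ghost lemma \ref{converse} to the contravariantly finite subcategory $\Add_\T(\G)$ (via \ref{contravariantly finite}), and is then left with showing $\thick^n_\T(\Add_\T(\G))\cap\T^c=\thick^n_\T(\G)$. Where you differ is that the paper settles this last point by citing \cite[Proposition 2.2.4]{Bondal/vdB:2003}, whereas you reprove it via your ``compact descent lemma'' (factor any map from a compact object into $\thick^k_\T(\Add_\T(\G))$ through a compact object of $\thick^k_\T(\G)$, by induction on $k$). Your descent argument is essentially the standard proof of the cited proposition, and the sketch is sound: the base case via finite subsums, the homotopy-pullback lift (your identification of the obstruction with $X\to\tilde Z_2\to Z_2\to\Cone(\tilde Z_2\to Z_2)$ is correct), and the finite-subsum choice of $\tilde Z_1$ are all fine. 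What this buys is self-containedness at the cost of length.

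The only place you should be more careful is the final enlargement step: the claim that the new obstruction is the composite of the old one with the projection $Z_1/\tilde Z_1\to Z_1/\tilde Z_1'$ depends on choosing the (non-unique) fill-in maps $\tilde Z\to\bar Z$, $\tilde Z'\to\bar Z$ and the cone identifications compatibly (e.g.\ choose $\tilde Z\to\tilde Z'$ over $\bar Z$ and invoke the octahedral axiom for the composite); as written this compatibility is asserted rather than arranged. It is repairable, but note there is a cleaner variant that avoids obstruction-comparison altogether: since $\Sigma\tilde Z_1\to\Sigma Z_1$ is a split monomorphism and $w\circ v=0$, the composite $X\to\tilde Z_2\to\Sigma\tilde Z_1$ vanishes, so $g\colon X\to\tilde Z_2$ lifts to $\tilde g\colon X\to\tilde Z$ directly; then $f-\psi\tilde g$ kills $Z_2$, hence factors through $Z_1$, hence (by compactness of $X$) through a finite subsum $\tilde Z_1''$, and $f$ factors through $\tilde Z\oplus\tilde Z_1''\in\thick^k_\T(\G)$, which is still compact. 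Either way the argument closes, so your plan is correct; it just replaces the paper's citation by its proof.
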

\begin{proof}
    Note that a morphism in $\T$ is $\G$-ghost if and only if it is $\Add_{\T}(\G)$-ghost. Hence, we have $\gin^\G_\T(X)=\gin^{\Add_{\T}(\G)}_\T(X)$. Combining with \ref{contravariantly finite} and \ref{converse}, $\gin^{\Add_{\T}(\G)}_\T(X)=\level_\T^{\Add_\T(\G)}(X)$. Thus, $\gin_\T^\G(X)=\level_\T^{\Add_\T(\G)}(X)$. 
Since $G\subseteq \T^c$, it follows from \cite[Proposition 2.2.4]{Bondal/vdB:2003} that $$\thick^n_\T(\Add_{\T}(
    \G))\cap \T^c=\thick^n_\T(
    \G).$$ Combining with $X\in \T^c$, this implies $\level^\G_\T(X)=\level^{\Add_{\T}(\G)}_\T(X)$. This completes the proof.
\end{proof}

\begin{proposition}\label{small-big}
  Let $\T$ be an $R$-linear triangulated category with arbitrary direct sums, and let $\C$ be a triangulated subcategory in $\T^c$. Assume $
X\xrightarrow {f_0} X_1\xrightarrow {f_1}X_2 \xrightarrow {f_2}\cdots$
be an Adams resolution of $X$ as \ref{Adams}. If $X$ is an objective in $\C$ and $\G$ is a full subcategory of $\C$, then:
\begin{enumerate}
    \item $\gin^\G_\T(X)=\level^\G_\T(X)=\level^\G_\C(X)={\rm inf}\{n\geq 0\mid f_{n-1}\circ f_{n-2}\circ \cdots \circ f_0=0\}$.
    \item Assume $\T$ is compactly generated. For each $\p\in \Spec(R)$, then
    $$
    \gin^{\G}_{\T_\p}(X)=\level_{\T_\p}^{\G}(X)=\level_{\C_\p}^{\G}(X)={\rm inf}\{n\geq 0\mid (f_{n-1}\circ f_{n-2}\circ \cdots \circ f_0)/1=0\}.
    $$

\item Assume $\T$ is compactly generated. There are equalities
\begin{align*}
    \level^\G_\C(X)& =\sup\{\level^{\G}_{\C_\p}(X)\mid \p\in \Spec(R)\}\\
   &= \sup\{\level^{\G}_{\C_\m}(X)\mid \m\in \maxSpec(R)\}.
\end{align*}
\end{enumerate}  

\end{proposition}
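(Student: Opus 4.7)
The plan is to derive (1) from the converse ghost lemma for compact objects (Lemma \ref{converse-compact}) combined with Proposition \ref{equality}, then transport the argument to the localized category for (2), and finally obtain (3) by an annihilator argument using the bijection in Lemma \ref{bijective}. For (1): since $\C$ is a full subcategory of $\T$, the inclusion $\C \hookrightarrow \T$ is fully faithful, so Proposition \ref{equality} yields $\level^\G_\C(X) = \level^\G_\T(X)$. Because $X \in \C \subseteq \T^c$ and $\G \subseteq \C \subseteq \T^c$, Lemma \ref{converse-compact} applies in $\T$ and gives $\gin^\G_\T(X) = \level^\G_\T(X)$. The remaining equality with $\inf\{n \geq 0 \mid f_{n-1} \circ \cdots \circ f_0 = 0\}$ is exactly \ref{gin} for the Adams resolution of $X$.

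For (2), Proposition \ref{embedding}(2) shows that $\T_\p$ is compactly generated and the quotient functor $\T \to \T_\p$ preserves compact objects, so both $X$ and the objects of $\G$ remain compact in $\T_\p$. Moreover $\T_\p$ has arbitrary direct sums by \ref{admitdirectsum}, and Lemma \ref{localAdams} ensures that the sequence $X \xrightarrow{f_0/1} X_1 \xrightarrow{f_1/1} X_2 \xrightarrow{f_2/1} \cdots$ is an Adams resolution of $X$ in $\T_\p$. Applying Lemma \ref{converse-compact} and \ref{gin} inside $\T_\p$ therefore gives
$$
\gin^\G_{\T_\p}(X) \;=\; \level^\G_{\T_\p}(X) \;=\; \inf\{n \geq 0 \mid (f_{n-1} \circ \cdots \circ f_0)/1 = 0\}.
$$
Finally, by Proposition \ref{embedding}(1) the induced functor $\C_\p \to \T_\p$ is fully faithful, so one more application of Proposition \ref{equality} yields $\level^\G_{\C_\p}(X) = \level^\G_{\T_\p}(X)$.

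For (3), the inequalities
$$
\sup_{\m \in \maxSpec(R)} \level^\G_{\C_\m}(X) \;\leq\; \sup_{\p \in \Spec(R)} \level^\G_{\C_\p}(X) \;\leq\; \level^\G_\C(X)
$$
hold, the second one because each quotient $\C \to \C_\p$ is an exact functor (see \ref{level}). It remains to show $\level^\G_\C(X) \leq \sup_{\m} \level^\G_{\C_\m}(X)$, and we may assume the right-hand side is a finite integer $N$. Set $g_N \colonequals f_{N-1} \circ \cdots \circ f_0 \in \Hom_\T^\ast(X, X_N)$. By (2), $g_N/1 = 0$ in $\T_\m$ for every $\m \in \maxSpec(R)$; since $X \in \T^c$, Lemma \ref{bijective} then yields, for each such $\m$, a homogeneous $s_\m \in R \setminus \m$ with $s_\m \cdot g_N = 0$. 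Hence the homogeneous annihilator of $g_N$ in $R$ is not contained in any maximal homogeneous prime ideal, so it must equal $R$; consequently $g_N = 0$, and (1) gives $\level^\G_\C(X) \leq N$. The main technical point is keeping the compactness hypotheses alive after passing to $\T_\p$ so that Lemma \ref{converse-compact} and Lemma \ref{bijective} remain available; Proposition \ref{embedding} is what makes this work, and the remainder is essentially mechanical bookkeeping.
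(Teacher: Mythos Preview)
Your proof is correct and follows essentially the same approach as the paper: part (1) via Proposition \ref{equality}, Lemma \ref{converse-compact}, and \ref{gin}; part (2) by transporting (1) to $\T_\p$ using Proposition \ref{embedding} and Lemma \ref{localAdams}; and part (3) via the localization/annihilator argument through Lemma \ref{bijective}. The only cosmetic differences are that the paper invokes (1) wholesale in the localized setting for (2) (rather than re-running its ingredients as you do), and in (3) the paper phrases the vanishing as ``$\frac{f}{1}=0$ in $\Hom_\T(X,X_n)_\m$ for all $\m$ implies $f=0$'' instead of your explicit annihilator formulation.
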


\begin{proof}
  (1) By Proposition \ref{equality}, $\level^\G_\T(X)=\level^\G_\C(X)$. Combining this with Lemma \ref{converse-compact}, it remains to see $\gin^\G_\T(X)={\rm inf}\{n\geq 0\mid f_{n-1}\circ f_{n-2}\circ \cdots \circ f_0=0\}$. This follows from \ref{gin}.

  (2) Using the map $\Phi$ in \ref{localization map}, we observe that $\T_\p$ is an $R_\p$-linear triangulated category. By Proposition \ref{embedding}, $\T_\p$ is compactly generated, and we can regard $\C_\p$ as a triangulated subcategory of $(\T_\p)^c$. Note that $f_n /1\circ f_{n-1}/1\circ \cdots \circ f_1/1=(f_{n-1}\circ f_{n-2}\circ \cdots \circ f_0)/1$. The desired result now follows by combining Lemma \ref{localAdams} and (1).

  (3) Note that 
  \begin{align*}
    \level^\G_\C(X)& \geq\sup\{\level^{\G}_{\C_\p}(X)\mid \p\in \Spec(R)\}\\
   &\geq \sup\{\level^{\G}_{\C_\m}(X)\mid \m\in \maxSpec(R)\}.
\end{align*} It remains to prove $\level^\G_\C(X)\leq n\colonequals\sup  \{\level^{\G}_{\C_\m}(X)\mid \m\in \maxSpec(R)\}$.  We may assume $n$ is finite. We write $f$ to be $f_{n-1}\circ f_{n-2}\circ \cdots \circ f_0\colon X\rightarrow X_n$. By (2), $f/1\in \Hom_{\T_\m}(X,X_n)$ is $0$ for each $\m\in \maxSpec(R)$. Since $X\in \T^c$, it follows from Lemma \ref{bijective} that $\frac{f}{1}\in \Hom_\T(X,X_n)_\m$ is $0$ for each $\m\in \maxSpec(R)$. This implies that $f\in \Hom_\T(X,X_n)$ is $0$. Then (1) yields that $\level^\G_\C(X)\leq n$.
\end{proof}

\begin{theorem}\label{main}
Let $R$ be a graded-commutative Noetherian ring, and let $\T$ be an $R$-linear compactly generated triangulated category and $\C$ be a triangulated subcategory of $\T^c$. For each $n\geq 0$, $X\in \C$, and a full subcategory $\G\subseteq \C$, the set
      $$
       \{\p\in \Spec(R)\mid \level_{\C_\p}^{\G}(X)\leq n\}
      $$
      is open in $\Spec(R)$. In particular, the set $\{\p\in \Spec(R)\mid \level^\G_{\C_{\p}}(X)<\infty\}$ is open in $\Spec(R)$.
\end{theorem}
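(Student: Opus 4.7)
The plan is to reduce the level condition to the vanishing of a single morphism $g_n \colon X \to X_n$ in $\T$, and then to read off openness from the fact that the non-vanishing locus of $g_n$ in $\Spec(R)$ is cut out by an annihilator ideal and is therefore the complement of a Zariski-closed set.

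First I would fix, using \ref{Adams}, an Adams resolution of $X$ with respect to $\G$, that is a sequence
$$X = X_0 \xrightarrow{f_0} X_1 \xrightarrow{f_1} X_2 \xrightarrow{f_2} \cdots$$
in $\T$ in which each $f_i$ is a $\G$-ghost and $\Cone(f_i) \in \Add_\T(\G)$. Write $g_n \colonequals f_{n-1} \circ \cdots \circ f_0 \in \Hom_\T(X, X_n)$ for the $n$-fold composition, with the convention $g_0 = \id_X$. By Proposition \ref{small-big}(2) applied to the prime $\p$, one has the equivalence
$$\level_{\C_\p}^{\G}(X) \leq n \iff g_n/1 = 0 \text{ in } \Hom_{\T_\p}(X, X_n).$$

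Since $X \in \C \subseteq \T^c$, Lemma \ref{bijective} identifies $\Hom_{\T_\p}^*(X, X_n)$ with the graded localization $\Hom_\T^*(X, X_n)_\p$, so the vanishing of $g_n/1$ is equivalent to the existence of a homogeneous $s \in R \setminus \p$ with $s \cdot g_n = 0$ in $\Hom_\T^*(X, X_n)$. Let $I_n \colonequals \ann_R(g_n)$; this is a homogeneous ideal of $R$, and the previous condition becomes $I_n \not\subseteq \p$. Consequently
$$\{\p \in \Spec(R) \mid \level_{\C_\p}^{\G}(X) \leq n\} \;=\; \Spec(R) \setminus V(I_n),$$
which is open in the Zariski topology. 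The ``in particular'' clause then follows by writing
$$\{\p \in \Spec(R) \mid \level_{\C_\p}^{\G}(X) < \infty\} \;=\; \bigcup_{n \geq 0} \{\p \in \Spec(R) \mid \level_{\C_\p}^{\G}(X) \leq n\}$$
as a union of open sets.

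The step I expect to require the most care, and which is the technical heart of the argument, is the identification $\Hom_{\T_\p}^*(X, X_n) \cong \Hom_\T^*(X, X_n)_\p$: the objects $X_n$ produced by an Adams resolution are typically built from arbitrary coproducts of objects in $\Add_\T(\G)$ and need not be compact, so Lemma \ref{bijective} has to be applied with the compact object on the source rather than on the target. This asymmetric ``Hom commutes with localization on the compact source'' phenomenon is precisely what reduces openness of a condition phrased in the localized category $\C_\p$ to the non-vanishing locus of a single element $g_n$ inside a graded $R$-module, and thence to the complement of $V(\ann_R(g_n))$.
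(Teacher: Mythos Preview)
Your proposal is correct and follows essentially the same argument as the paper: both take an Adams resolution, invoke Proposition~\ref{small-big}(2) to translate $\level_{\C_\p}^{\G}(X)\le n$ into the vanishing of the composite $g_n/1$ in $\T_\p$, use Lemma~\ref{bijective} (with the compact object $X$ on the source) to pass to $\Hom_\T(X,X_n)_\p$, and then identify the complement of the level set with $V(\ann_R(g_n))$. Your closing remark about the asymmetry in Lemma~\ref{bijective}---that $X_n$ need not be compact, so compactness of the source $X$ is what is used---is exactly the point.
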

\begin{proof}
Set $V_n=\{\p\in \Spec(R)\mid \level_{\C_\p}^{\G}(X)\leq n\}$. The second statement follows from the first one as $\{\p\in \Spec(R)\mid \level_{\mathcal{C}}^\G(X)<\infty)\}=\bigcup_{n\geq 0}V_n$. Let $
X\xrightarrow {f_0} X_1\xrightarrow {f_1}X_2 \xrightarrow {f_2}\cdots
$ be an Adams resolution of $X$ as \ref{Adams}. By Proposition \ref{small-big},
    $$
V_n=\{\p\in \Spec(R)\mid (f_{n-1}\circ f_{n-2}\circ \cdots \circ f_0)/1=0\}.
$$
We write $f$ to be $f_{n-1}\circ f_{n-2}\circ \cdots \circ f_0\colon X\rightarrow X_n$. By assumption, $X\in \T^c$. It follows from Lemma \ref{bijective} that $f/1\in \Hom_{\T_\p}(X, X_n)$ is $0$ if and only if $\frac{f}{1}\in \Hom_\T(X,X_n)_\p$ is $0$. Hence,
\begin{align*}
    \Spec(R)\setminus V_n& =\{\p\in \Spec(R)\mid \frac{f}{1}\neq 0\in \Hom_\T(X,X_n)_\p\}\\
    & =\{\p\in \Spec(R)\mid \ann_R(f)\subseteq \p\}
\end{align*}
is closed in $\Spec(R)$, where $\ann_R(f)=\{r\in R\mid r\cdot f=0\}$.
This completes the proof.
\end{proof}
\begin{remark}\label{coincide}
    Let $R$ be a commutative Noetherian ring. Combining with Example \ref{Krause's observation}, it follows from Theorem \ref{main} that, for each $n\geq 0$ and $X,G\in \D^f_b(R)$, the set
    $$
    \{\p\in \Spec(R)\mid \level^{G_\p}_{\D^f_b(R_\p)}(X_\p)\leq n\}
    $$
    is open in $\Spec(R)$. This result is due to Letz \cite[Proposition 3.5]{Letz:2021}. Our Theorem \ref{main} builds on the work of Letz. We observe that the openness result works well for a triangulated subcategory $\C$ within the full subcategory of compact objects in an $R$-linear compactly generated triangulated category $\T$. Letz's proof is based on Oppermann and Šťovíček's work \cite{OS:2012} which establishes the converse coghost lemma for $\D^f_b(R)$, whereas our proof of Theorem \ref{main} makes use of the converse ghost lemma in $\T$; see Lemma \ref{converse-compact}.
\end{remark}

Compare the following result with \cite[Theorem 5.10]{BIK:2015}.
\begin{corollary}\label{local-global principle}
     Keep the assumption as Theorem \ref{main}. For each $X\in \C$ and a full subcategory $\G\subseteq\C$, the following are equivalent.
\begin{enumerate}
    \item $\level^\G_\C(X)<\infty$.

    \item $\level^{\G}_{\C_\p}(X)<\infty$ for each $\p\in \Spec(R)$.

    \item $\level^{\G}_{\C_\m}(X)<\infty$ for each $\m\in \maxSpec(R)$.
\end{enumerate}
\end{corollary}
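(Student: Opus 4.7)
The plan is to reduce the corollary to Theorem \ref{main} and Proposition \ref{small-big}(3), combined with the Noetherianity of $\Spec(R)$ as a topological space.

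First I would verify that (1) $\Rightarrow$ (2) $\Rightarrow$ (3) are immediate. The first implication follows from the first equality in Proposition \ref{small-big}(3), which says $\level^\G_\C(X) = \sup\{\level^\G_{\C_\p}(X) \mid \p \in \Spec(R)\}$, so finiteness of the left side forces finiteness of each term on the right. The second implication is trivial since $\maxSpec(R) \subseteq \Spec(R)$.

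For the essential implication (3) $\Rightarrow$ (1), I would set $V_n \colonequals \{\p \in \Spec(R) \mid \level^\G_{\C_\p}(X) \leq n\}$. By Theorem \ref{main} each $V_n$ is open in $\Spec(R)$, and clearly $V_0 \subseteq V_1 \subseteq V_2 \subseteq \cdots$. Since $R$ is graded-commutative Noetherian, $\Spec(R)$ is a Noetherian topological space, so the ascending chain of opens stabilizes: there exists $N \geq 0$ with $V_n = V_N$ for all $n \geq N$, hence $\bigcup_n V_n = V_N$. Assumption (3) then gives $\maxSpec(R) \subseteq V_N$.

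Next I would argue that $V_N = \Spec(R)$. If the complement $W_N \colonequals \Spec(R) \setminus V_N$ were nonempty, then $W_N = V(I)$ for some proper graded ideal $I$ of $R$. By Zorn's lemma there is a maximal proper graded ideal $\m$ containing $I$, and in a graded-commutative ring every maximal graded ideal is prime, so $\m \in \maxSpec(R) \cap W_N$, contradicting $\maxSpec(R) \subseteq V_N$. Therefore $V_N = \Spec(R)$, i.e.\ $\level^\G_{\C_\p}(X) \leq N$ for every $\p \in \Spec(R)$, and Proposition \ref{small-big}(3) then yields $\level^\G_\C(X) \leq N < \infty$, establishing (1). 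The only real work is the topological step in the previous paragraph; the main (modest) obstacle is invoking correctly that nonempty closed subsets of the homogeneous $\Spec(R)$ contain a maximal homogeneous prime, which rests on the graded-commutativity used in Section \ref{notation}.
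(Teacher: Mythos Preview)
Your proof is correct and follows essentially the same approach as the paper: stabilize the chain $V_n$ by Noetherianity, deduce $\maxSpec(R)\subseteq V_N$ from assumption (3), and then invoke Proposition~\ref{small-big}(3). The only difference is that you insert an extra (harmless) step showing $V_N=\Spec(R)$ before applying the $\Spec(R)$-equality in Proposition~\ref{small-big}(3), whereas the paper skips this and applies the $\maxSpec(R)$-equality directly.
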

\begin{proof}
 The implications $(1)\Longrightarrow (2)\Longrightarrow (3)$ are straightforward. Assume $\level^{\G}_{\C_\m}(X)<\infty$ for each $\m\in \maxSpec(R)$. Set $V_n=\{\p\in \Spec(R)\mid \level_{\C_\p}^{\G}(X)\leq n\}$. This is open in $\Spec(R)$ by Theorem \ref{main}.
Since $R$ is Noetherian, the ascending chain
$$
V_1\subseteq V_2\subseteq V_3\subseteq \cdots \subseteq V_n\subseteq \cdots$$ stabilizes for $n\gg 0$. Therefore, there exists $N>0$ such that $V_N=V_i$ for all $i\geq N$. The assumption now yields $\level^{\G_\m}_{\C_\m}(X_\m)\leq N$ for each $\m\in \maxSpec(R)$. By Proposition \ref{small-big}, $\level^\G_\C(X)\leq N$. 
\end{proof}

\begin{chunk}\label{def of singularity category}
    For a commutative Noetherian ring $R$, the \emph{singularity category} of $R$ is the Verdier quotient
$$\D_{\sg}(R)\colonequals \D^f_b(R)/\thick_{\D^f_b(R)}(R).$$
This was first introduced by Buchweitz \cite[Definition 1.2.2]{Buchweitz/Appendix:2021} under the name ``stable derived category"
(see also \cite{Orlov04}).  This terminology is justified by the fact that the singularity category of $R$ is trivial if and only if $R$ is regular.
\end{chunk}
\begin{corollary}\label{application-singularity}
    Let $R$ be a commutative Noetherian ring. For each $n\geq 0$, $X\in \D_{\sg}(R)$, and a full subcategory $\G$ of $\D_{\sg}(R)$, the set
    $$
    \{\p\in \Spec(R)\mid \level^{\G_\p}_{\D_{\sg}(R_\p)}(X_\p)\leq n\}
    $$
    is  open in $\Spec(R)$, where $\G_\p\colonequals\{G_\p\mid G\in \G\}$ is a subcategory of $\D_{\sg}(R_\p)$. 
\end{corollary}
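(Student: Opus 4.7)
The plan is to derive this corollary from Theorem \ref{main} by realizing $\D_{\sg}(R)$ inside the compact part of a suitable $R$-linear compactly generated triangulated category. The natural candidate, parallel to Example \ref{Krause's observation}, is $\T = \K_{\ac}(\Inj R)$, the homotopy category of acyclic complexes of injective $R$-modules. By results of Krause, $\T$ is $R$-linear and compactly generated, and there is a fully faithful triangulated embedding $\iota\colon \D_{\sg}(R) \hookrightarrow \T$ whose essential image lies in $\T^c$ and generates $\T^c$ up to direct summands.

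Setting $\C := \iota(\D_{\sg}(R))$, a triangulated subcategory of $\T^c$, Proposition \ref{equality} yields $\level^{\G}_{\D_{\sg}(R)}(X) = \level^{\iota(\G)}_{\C}(\iota(X))$ for every $X \in \D_{\sg}(R)$ and every full subcategory $\G \subseteq \D_{\sg}(R)$. The crux of the argument is to identify, for each $\p \in \Spec(R)$, the BIK localization $\C_\p$ with $\D_{\sg}(R_\p)$ (up to direct summands), in a way compatible with the pointwise localization $\iota(Y) \mapsto Y_\p$. Once this identification is in place, a second application of Proposition \ref{equality} gives $\level^{\iota(\G)}_{\C_\p}(\iota(X)) = \level^{\G_\p}_{\D_{\sg}(R_\p)}(X_\p)$, so that the set in the statement equals $\{\p \in \Spec(R) : \level^{\iota(\G)}_{\C_\p}(\iota(X)) \leq n\}$, which is open in $\Spec(R)$ by Theorem \ref{main} applied to the data $(\T, \C, \iota(X), \iota(\G))$.

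To construct the identification $\C_\p \simeq \D_{\sg}(R_\p)$ (up to summands), I would follow the outline of Example \ref{Krause's observation}. Functoriality of Krause's construction (or a direct Koszul-object calculation as in Lemma \ref{iso}(1)) shows that $\iota(Y) \in \C_{Z(\p)}$ precisely when $Y_\p = 0$ in $\D_{\sg}(R_\p)$, so the functor $\D_{\sg}(R) \to \D_{\sg}(R_\p)$, $Y \mapsto Y_\p$, descends through the Verdier quotient to a triangulated functor $\bar F\colon \C_\p \to \D_{\sg}(R_\p)$. Full faithfulness of $\bar F$ then reduces, via Lemma \ref{iso}(2), to the singularity-category analogue of Matsui's identity $\Hom_{\D_{\sg}(R)}(X,Y)_\p \cong \Hom_{\D_{\sg}(R_\p)}(X_\p, Y_\p)$, while density (up to direct summands) follows from the density of $\iota(\D_{\sg}(R))$ in $\T^c$ together with Proposition \ref{embedding}(2). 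The main obstacle is the $\Hom$-set localization identity just cited: this should follow by writing morphisms in $\D_{\sg}(R)$ as Verdier roofs in $\D^f_b(R)$ modulo $\thick_{\D^f_b(R)}(R)$, localizing at $\p$ using Example \ref{Krause's observation}, and noting that $(-)_\p$ sends perfect complexes to perfect complexes so that localization commutes with the Verdier quotient defining the singularity category.
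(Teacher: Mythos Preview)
Your approach is correct and matches the paper's: embed $\D_{\sg}(R)$ into $\K_{\ac}(\Inj R)^c$ via Krause, identify the abstract localization $\D_{\sg}(R)_\p$ with $\D_{\sg}(R_\p)$, and invoke Theorem~\ref{main}. The only difference is that the paper does not re-derive the identification $\D_{\sg}(R)_\p \simeq \D_{\sg}(R_\p)$ but simply cites \cite[Corollary 4.4]{Liu:2023}; your roof-and-localization sketch of the $\Hom$-formula $\Hom_{\D_{\sg}(R)}(X,Y)_\p \cong \Hom_{\D_{\sg}(R_\p)}(X_\p,Y_\p)$ is the right idea but would require a cofinality argument to make rigorous, and your density claim via Proposition~\ref{embedding}(2) is a detour---essential surjectivity is immediate from that of $\D^f_b(R)\to\D^f_b(R_\p)$.
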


\begin{proof}
   The homotopy category of acyclic complexes of injective $R$-modules, denoted by  $\K_{ac}(\Inj R)$, is $R$-linear. By \cite[Corollary 5.4]{Krause:2005}, $\K_{\ac}(\Inj R)$ is compactly generated, and there is a triangle equivalence 
   $   \D_{\sg}(R)\xrightarrow \simeq \K_{\ac}(\Inj R)^c
    $
    up to direct summands.
    It follows from \cite[Corollary 4.4]{Liu:2023} that the localization functor $\D_{\sg}(R)\rightarrow \D_{\sg}(R_\p)$ induces a triangle equivalence $\D_{\sg}(R)_\p\xrightarrow \simeq \D_{\sg}(R_\p).$  The desired result now follows immediately from Theorem \ref{main}.
\end{proof}

\begin{example}\label{modular-example}
Let $\k$ be a field of characteristic $p>0$, 
and let $G$ be a finite group whose order is divisible by $p$.
We consider the group algebra $\k G$ which is a noncommutative ring,
and its homotopy category of injectives $\T =\K(\Inj \k G)$.
Let $R = \h^*(G,\k)$ be the cohomology ring.
As explained in \cite[Section 10]{BIK:2008} and \cite{BIK:2011annmath,BensonKrause2008},
the ring $R$ is a graded-commutative Noetherian ring,
the category $\T$ is a compactly generated $R$-linear triangulated category,
and there is a triangle equivalence $\D^f_b(\k G)\cong \K(\Inj \k G)^c.$ 
It follows from Theorem \ref{main} that for any $n\geq 0$, any $X\in \D^f_b(\k G)$,
and any full subcategory $\G\subseteq \D^f_b(\k G)$, the set
$$
       \{\p\in \Spec(\h^*(G,\k))\mid \level_{(\D^f_b(\k G))_\p}^{\G}(X)\leq n\}
      $$
is open in $\Spec(\h^*(G,\k))$.
It should be noted that here 
$(\D^f_b(\k G))_\p$ is an abstract localization of the derived category of a noncommutative ring as in \ref{localization at p}, 
so in general it has no simple description. 

Specializing further to the case where $\G = \{\k G\}$,
note that $\level^\G_{\D^f_b(\k G)}(X)<\infty$ if and only if $\pd_{\k G}(X) < \infty$.
It then follows from Corollary \ref{local-global principle}
that $X\in \D^f_b(\k G)$ satisfies $\pd_{\k G}(X) < \infty$
if and only if for all $\p \in \Spec(\h^*(G,\k))$,
it holds that
$\level^{\{\k G\}}_{(\D^f_b(\k G))_\p}(X)<\infty$.
\end{example}

\section{local-global principle on DG rings}\label{local-global section}
 The main result in this section is the local-global principle of $\D^f_b(A)$ over DG rings; see Corollary \ref{local-global for D^f(A)}. This specializes to Letz's local-global principle \cite[Theorem 3.6]{Letz:2021} when $A$ is a commutative Noetherian ring.  Without further assumption, $A$ will be a non-negative commutative DG ring such that $\h_0(A)$ is a Noetherian ring and $\h(A)$ is finitely generated over $\h_0(A)$.
\begin{chunk}\label{localization}
 For each $X,Y\in \D^f_b(A)$ and $\p\in \Spec(A_0)$, by \cite[Lemma 2.7 (iii)]{BSSW} there is a natural isomorphism
$$
\RHom_{A}(X,Y)_\p\xrightarrow \cong \RHom_{A_\p}(X_\p,Y_\p)
$$
in $\D(A_\p)$. 
\end{chunk}

\begin{chunk}\label{semi-free res}
    A DG $A$-module $F$ is called \emph{semi-free} if it admits a filtration of DG $A$-submodules
$$\cdots\subseteq F(i-1) \subseteq  F(i) \subseteq F(i+1) \subseteq\cdots$$
such that $F(i) = 0$ for $i \gg 0$
 , $F = \bigcup_{i\in\Z}F(i)$, and  $F(i)/F(i-1)$ is a direct sum of suspensions of $A$ for each $i\in\Z$; see \cite[Section 8]{AFH} for more details. The filtration $\{F(i)
\}_{i\in\Z}$ is called a \emph{semifree filtration} of $F$.
\end{chunk}

\begin{lemma}\label{dense}
 For each $\p\in \Spec(A_0)$, the canonical functor $\D^f_b(A)\rightarrow \D^f_b(A_\p)$ which maps $X$ to $X_\p$ is dense. 
\end{lemma}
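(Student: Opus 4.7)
The plan is to prove density by induction on the amplitude $\amp(Y)$ of $Y \in \D^f_b(A_\p)$.

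For the base case $\amp(Y) = 0$, the cohomology of $Y$ is concentrated in a single degree $t$, and $\h_t(Y)$ is a finitely generated $\h_0(A_\p) = (\h_0(A))_{\bar{\p}}$-module. Since $\h_0(A)$ is Noetherian, there is a finitely generated $\h_0(A)$-module $N$ with $\h_t(Y) \cong N_{\bar{\p}}$. Viewing $N$ as a DG $A$-module concentrated in degree $0$ via the augmentation $A \to \h_0(A)$ (positive-degree elements act as zero, and $A_0$ acts through $\h_0(A)$), set $X \colonequals \Sigma^t N \in \D^f_b(A)$. The non-negativity of $A_\p$ allows one to define truncation functors $\tau_{\geq s}$ and $\tau_{\leq s}$ on DG $A_\p$-modules, and an explicit computation shows that the zigzag
\begin{equation*}
Y \xleftarrow{\sim} \tau_{\geq t} Y \xrightarrow{\sim} \tau_{\leq t}(\tau_{\geq t} Y) \cong \Sigma^t \h_t(Y) \cong \Sigma^t N_{\bar{\p}} \cong X_\p
\end{equation*}
consists of quasi-isomorphisms, so $X_\p \simeq Y$ in $\D(A_\p)$.

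For the inductive step $\amp(Y) \geq 1$, write $t = \sup(Y)$; the short exact sequence $0 \to \tau_{\geq t} Y \to Y \to \tau_{\leq t-1} Y \to 0$ of DG $A_\p$-modules yields a distinguished triangle
$$\tau_{\geq t} Y \to Y \to \tau_{\leq t-1} Y \xrightarrow{\delta} \Sigma \tau_{\geq t} Y$$
in $\D(A_\p)$. Since $\tau_{\geq t} Y$ has amplitude $0$ and $\tau_{\leq t-1} Y$ has amplitude $\leq \amp(Y) - 1$, the inductive hypothesis produces $X', X'' \in \D^f_b(A)$ with $(X'')_\p \simeq \tau_{\geq t} Y$ and $(X')_\p \simeq \tau_{\leq t-1} Y$.

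It remains to lift $\delta$. By \ref{localization} (passing to $\h_0$), $\Hom_{\D(A_\p)}((X')_\p, \Sigma (X'')_\p) \cong \Hom_{\D(A)}(X', \Sigma X'')_\p$, so $\delta = f/s$ for some $f \in \Hom_{\D(A)}(X', \Sigma X'')$ and $s \in A_0 \setminus \p$. Since $s$ is invertible in $A_\p$, $f/s$ and $f/1$ differ by precomposition with the automorphism $s^{-1} \cdot \id_{(X')_\p}$, hence $\cone(f/s) \cong \cone(f/1)$ in $\D(A_\p)$. Defining $X$ via the distinguished triangle $X'' \to X \to X' \xrightarrow{f} \Sigma X''$ in $\D(A)$ yields $X \in \D^f_b(A)$ by thickness of $\D^f_b(A)$, and localizing this triangle produces a triangle isomorphic to the original one for $Y$, so $X_\p \simeq Y$. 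The main subtle step is the base case, where one needs the standard truncation calculations for DG modules over a non-negative DG ring in order to justify the zigzag above; the inductive step is then routine given the $\Hom$-localization isomorphism of \ref{localization}.
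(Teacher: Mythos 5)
Your proof is correct, but it follows a genuinely different route from the paper's. The paper proves density by lifting a semi-free resolution of $Y$: it takes a semi-free filtration $\{G(i)\}$ of a resolution of $Y$ over $A_\p$, inductively lifts each attaching map $\Sigma^{n-1}A_\p^{\beta_n}\rightarrow G(n-1)$ to a map over $A$ after clearing a denominator (using the same $\Hom$-localization isomorphism of \ref{localization} that you invoke), and then cuts the resulting semi-free DG module $F$ down by an explicit DG submodule $F^\prime$ so that $F/F^\prime$ lies in $\D^f_b(A)$ and still localizes to $Y$. You instead run a d\'evissage on $\amp(Y)$: the base case lifts a single finitely generated module over the Noetherian localization $\h_0(A_\p)$ and uses the smart truncations available over a non-negative DG ring, while the inductive step lifts the connecting morphism $\delta$ through $\Hom_{\D(A)}(X^\prime,\Sigma X^{\prime\prime})_\p\cong \Hom_{\D(A_\p)}((X^\prime)_\p,\Sigma (X^{\prime\prime})_\p)$, replaces $f/s$ by $f/1$ up to an automorphism, and takes a cocone; thickness of $\D^f_b(A)$ and exactness of localization finish the argument. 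Your approach buys brevity and purely triangulated bookkeeping, with no explicit model of the lift; the paper's approach is more constructive, producing a concrete bounded DG module, at the cost of the filtration and truncation bookkeeping. One cosmetic point: for $\p\in\Spec(A_0)$ not containing $\ker(A_0\rightarrow \h_0(A))$ the ring $\h_0(A_\p)$ is zero and $\bar\p$ does not exist, but then every object of $\D^f_b(A_\p)$ is acyclic and density is trivial; so phrase the base case with the localization of $\h_0(A)$ at the image of $A_0\setminus\p$ rather than with $\h_0(A)_{\bar\p}$, and your argument goes through unchanged.
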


\begin{proof}
For each $Y\in \D^f_b(A_\p)$, by \cite[Proposition B.2]{AINSW} there is a semi-free resolution $G\xrightarrow \simeq Y$ over $A_\p$ such that $G_i=0$ for $i<\inf(Y)$ and $G$ has a semi-free filtration $\{G(i)\}_{i\in \Z}$, where each $G(i)$ fits into a
 short exact sequence 
$
0\rightarrow G(i-1)\rightarrow G(i)\rightarrow \Sigma^i A_\p^{\beta_i}\rightarrow 0
$
of DG $A_\p$-modules 
for some integer $\beta_i\geq 0$.
We claim that there exists a semi-free filtration $\{F(i)\}_{i\in\Z}$ over $A$
such that the following three conditions hold.
\begin{enumerate}
\item  $F(i)=0$ for $i<\inf(Y)$.

\item  Each $F(i)$ fits into a short exact sequence 
$
0\rightarrow F(i-1)\rightarrow F(i)\rightarrow \Sigma^i A^{\beta_i}\rightarrow 0
$
of DG $A$-modules.

\item  For each $i$, there exists an isomorphism $\pi_i\colon F(i)_\p\xrightarrow \cong G(i)$ in $\D^f_b(A_\p)$. Moreover, for each $i$, there is a commutative diagram 
$$
\xymatrix{
F(i-1)_\p\ar[r]\ar[d]_{\pi_{i-1}}^\cong & F(i)_\p\ar[d]^{\pi_i}_\cong\\
G(i-1)\ar[r] & G(i),
}
$$
where the horizontal maps are maps induced by $F(i-1)\subseteq F(i)$ and $G(i-1)\subseteq G(i)$ respectively.  
\end{enumerate}

We choose $F(i)$ which satisfies (1), (2), and (3) by induction on $i$. For $i<\inf(Y)$, set $F(i)=0$. In this case, (2) and (3) hold trivially. Assume we have chosen $F(i)$ for $i<n$ which satisfies (2) and (3). Next, we construct $F(n)$. By induction, there is an isomorphism $\pi_{n-1}\colon F(n-1)_\p\cong G(n-1)$ in $\D^f_b(A_\p)$. By the filtration ${G(i)}$, there is an exact triangle

$$
\Sigma^{n-1}A_\p^{\beta_{n}}\xrightarrow{\varphi_{n-1}} G(n-1)\rightarrow G(n)\rightarrow\Sigma^n A_\p^{\beta_n}
$$
in $\D^f_b(A_\p)$. Let $\K(A)$ denote the homotopy category of DG $A$-modules. 
Consider the isomorphisms
$$
\Hom_{\K(A)}(\Sigma^{n-1}A^{\beta_n}, F_{n-1})_\p\xrightarrow \cong \Hom_{\D(A)}(\Sigma^{n-1}A^{\beta_n}, F_{n-1})_\p\xrightarrow\cong \Hom_{\D(A_\p)}(\Sigma^{n-1}A^{\beta_n}_\p, (F_{n-1})_\p),
$$
where the first isomorphism is because $\Sigma^{n-1}A^{\beta_n}$ is semi-free and the second one is from \ref{localization}. Thus, there exists a morphism $\frac{\alpha_{n-1}}{s_{n-1}}$ in $\Hom_{\K(A)}(\Sigma^{n-1}A^{\beta_n}, F_{n-1})_\p$, where $\alpha_{n-1}\colon \Sigma^{n-1}A^{\beta_n}\rightarrow F_{n-1}$ is in $\K(A)$ and $s_{n-1}\notin \p$, such that the image of $\frac{\alpha_{n-1}}{s_{n-1}}$ under the composition of the above isomorphisms is $(\pi_{n-1})^{-1}\circ \varphi_{n-1}$. We define $F(n)$ to be the cone of $\alpha_{n-1}$. That is,  there is an exact triangle 
$$
\Sigma^{n-1}A^{\beta_n}\xrightarrow{\alpha_{n-1}} F(n-1)\rightarrow F(n)\rightarrow \Sigma^n A^{\beta_n}
$$
in $\K(A)$, and hence there is a short exact sequence of DG $A$-modules
$$
0\rightarrow F(n-1)\rightarrow F(n)\rightarrow \Sigma^n A^{\beta_n}\rightarrow 0.
$$
Note that there exists a morphism $\pi_n\colon F(n)_\p\xrightarrow \cong G(n)$ in $\D^f_b(A_\p)$ such that the following diagram
$$
\xymatrix{
\Sigma^{n-1}A_\p^{\beta_n}\ar[r]^{\alpha_{n-1}/1}\ar[d]^{s_{n-1}/1}_\cong & F(n-1)_\p\ar[r]\ar[d]^{\pi_{n-1}}_\cong &  F(n)_\p \ar[r]\ar@{-->}[d]^{\exists \pi_n}& \Sigma^n A^{\beta_n}_\p\ar[d]^{\Sigma (s_{n-1}/1)}\\
\Sigma^{n-1}A_\p^{\beta_{n}}\ar[r]^{\varphi_{n-1}}& G(n-1)\ar[r]& G(n)\ar[r]& \Sigma A_\p^{\beta_n}
}
$$
in $\D^f_b(A_\p)$ is commutative. Note that $\pi_n$ is an isomorphism. This finishes the inductive proof.

Set $F\colonequals\displaystyle\bigcup_{n\in\mathbb Z} F(i)$. Then $\{F(i)\}_{i\in\Z}$ is a semi-free filtration of $F$. Note that $\{F(i)_\p\}_{i\in\Z}$ is a semi-free filtration of $F_\p$. Combining with (3), we conclude that $F_\p\cong G$ in $\D^f_b(A_\p)$, and hence $F_\p\cong Y$ in $\D^f_b(A_\p)$. 
We define $F^\prime\colonequals\displaystyle \coprod_{i\in \mathbb Z}F^\prime_i$ to be: $F^\prime_i=F_i$ for $i>\sup(Y)$, $F^\prime_i$ is defined to be the image of $\partial_i^F$ for $i=\sup(Y)$, and $F^\prime_i= 0$ for $i<\sup(Y)$; $\del^{F^\prime}= \del^F$. By \cite[Proposition 3.10]{ABIM:2010}, $F^\prime$ is a DG $A$-submodule of $F$. 

Now, consider the DG $A$-module $F/F^\prime$. It suffices to show that $F/F^\prime\in\D^f_b(A)$ and $(F/F^\prime)_\p\cong Y$ in $\D^f_b(A_\p)$.  For each $i\in\Z$, note that $F(i)_j=F_j$ for $j\leq i$ and $F(i)\in \D^f_b(A)$.  It follows that $\h_i(F)$ is finite generated over $\h_0(A)$ for each $i\in\Z$. Since $(F/F^\prime)_i=0$ for $i>\sup(Y)$, $\h_i(F)=\h_i(F/F^\prime)$ for $i\leq \sup(Y)$, and $F_i=0$ for $i<\inf(Y)$, we have $F/F^\prime\in\D^f_b(A)$. 
On the other hand, for each $i>\sup(Y)$, $\h_i(F^\prime_\p)=\h_i(F_\p)=0$, where the second equality is due to the isomorphism $F_\p\cong Y$ in $\D^f_b(A)$. Combining this with $F^\prime_i=0$ for $i<\sup(Y)$ and $\h_i(F^\prime)=0$ for $i=\sup(Y)$,  we conclude that $F^\prime_\p$ is an acyclic DG module, and hence $(F/F^\prime)_\p\cong F_\p\cong Y$ in $\D^f_b(A)$. This completes the proof.
\end{proof}


For a DG ring $A$, let  $\K(\Inj A)$ denote the homotopy category of graded-injective DG $A$-modules; a DG $A$-module is \emph{graded-injective} provided that it is injective in the category of graded $A^\natural$-modules.  Note that $\K(\Inj A)$ is an $A_0$-linear triangulated category. A DG $A$-module $I$ is \emph{semi-injective} if $I$ is graded-injective and $\Hom_A(-,I)$ preserves quasi-isomorphisms; see \cite[Section 10]{AFH}. Each DG $A$-module admits a semi-injective resolution; see \cite[Theorem 3.3.3]{AFH}.
\begin{lemma}\label{compact}
   Assume, in addition, $A^\natural$ is Noetherian. For each $M$ in $\D^f_b(A)$, each semi-injective resolution of $M$ is compact in $\K(\Inj A)$,
\end{lemma}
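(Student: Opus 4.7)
The plan is to follow Krause's proof in the classical (non-DG) case recalled in \ref{Krause's observation}, adapted to the DG setting using the Noetherian hypothesis on $A^\natural$. Since any two semi-injective resolutions of $M$ are isomorphic in $\K(\Inj A)$ and compactness is isomorphism-invariant, it suffices to verify the statement for one choice of resolution.

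As a preliminary observation, note that the Noetherian hypothesis on $A^\natural$ guarantees that a direct sum of graded-injective DG $A$-modules is again graded-injective. Hence $\K(\Inj A)$ is closed under arbitrary coproducts inside $\K(A)$, and these coproducts coincide with the degreewise direct sums; in particular, compactness in $\K(\Inj A)$ is a meaningful notion.

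My first step is to verify that the collection
\[
\mathcal{N} = \bigl\{N \in \D^f_b(A) \,\bigm|\, \text{a semi-injective resolution of } N \text{ is compact in } \K(\Inj A)\bigr\}
\]
is a thick subcategory of $\D^f_b(A)$. This uses that a compatible choice of semi-injective resolutions sends a triangle in $\D(A)$ to a triangle in $\K(\Inj A)$, together with the fact that the compact objects of $\K(\Inj A)$ form a thick subcategory. Next, I would reduce by dévissage to residue DG modules. Using the standard $t$-structure on $\D(A)$ and the hypothesis that $\h(M)$ is finitely generated over the Noetherian ring $\h_0(A)$, one presents $M$ as a finite iterated extension of the shifts $\h_n(M)[n]$ of the (finitely many nonzero) homology modules; each $\h_n(M)$ is a finitely generated $\h_0(A)$-module and hence, by the classical prime filtration, lies in the thick subcategory of $\D(\h_0(A))$ generated by the cyclic modules $\h_0(A)/\bar{\mathfrak p}$ for $\bar{\mathfrak p} \in \Spec \h_0(A)$. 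Viewing these as DG $A$-modules via the augmentation $A \twoheadrightarrow \h_0(A)$ and restricting scalars along it, one obtains $M \in \thick_{\D(A)}\{\h_0(A)/\bar{\mathfrak p}\}$. By thickness of $\mathcal{N}$, the problem reduces to showing $\h_0(A)/\bar{\mathfrak p} \in \mathcal{N}$ for every $\bar{\mathfrak p} \in \Spec \h_0(A)$.

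The main obstacle is this final step: showing that a semi-injective resolution of the residue DG module $\h_0(A)/\bar{\mathfrak p}$ is compact in $\K(\Inj A)$. Unlike in $\D(A)$, where $\h_0(A)/\bar{\mathfrak p}$ is typically far from compact, compactness here is a genuinely weaker condition because $\K(\Inj A) \to \D(A)$ is not faithful, and the upshot is that compactness in $\K(\Inj A)$ of a semi-injective resolution of $M \in \D^f_b(A)$ does hold under the Noetherian hypothesis on $A^\natural$. This is the DG analogue of Krause's theorem $\D^f_b(R) \cong \K(\Inj R)^c$ from \cite{Krause:2005}, and its extension to non-negative commutative Noetherian DG rings is precisely the point at which the authors acknowledge Positselski's help: one invokes Positselski's description of $\K(\Inj A)$ as a compactly generated triangulated category (essentially its identification with an appropriate coderived category of $A$) to conclude that each $\iota(\h_0(A)/\bar{\mathfrak p})$ is compact. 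Feeding this into the dévissage completes the proof.
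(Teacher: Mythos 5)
Your reduction steps (thickness of $\mathcal{N}$, d\'evissage via truncations and prime filtrations to the residue DG modules $\h_0(A)/\bar{\mathfrak p}$) are unobjectionable, but they do not reduce the difficulty: the step you leave to ``Positselski's description of $\K(\Inj A)$'' is exactly the content of the lemma, and as stated it does not follow from knowing that $\K(\Inj A)$ is compactly generated or identified with a coderived category. Two separate inputs are needed, and your outline supplies neither. First, one needs a criterion identifying compact objects of the coderived category: Positselski's Theorem 3.11.2 says that a DG module $N$ with $N^\natural$ finitely generated over the Noetherian graded ring $A^\natural$ is compact there. Second --- and this is the subtlety your sketch skips --- one must know that the semi-injective resolution map $N \to I$ becomes an isomorphism in the coderived category, i.e.\ that its cone, which is merely acyclic, is in fact coacyclic; acyclicity does not imply coacyclicity in general, so compactness of $\h_0(A)/\bar{\mathfrak p}$ as an object of the coderived category says nothing a priori about its semi-injective resolution in $\K(\Inj A)$. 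In the paper this is arranged by first replacing $M$ by a model whose underlying graded module is finitely generated over $A_0$ (truncating a degreewise finite semi-free resolution), so that the module vanishes in high homological degrees, then choosing by \cite[Theorem 12.3.2]{AFH} a semi-injective resolution $I$ with $I_i=0$ for $i\gg 0$; the cone of $M\to I$ is then acyclic and bounded above, hence coacyclic by Positselski's Theorem 3.4.1(a), and only then does compactness of $M$ in the coderived category transfer to compactness of $I$ in $\K(\Inj A)$ via the equivalence of his Theorem 3.7.

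Note also that once these inputs are in place, your d\'evissage is superfluous: the finiteness criterion applies directly to the truncated semi-free model of an arbitrary $M\in\D^f_b(A)$, which is how the paper argues, and proving your residue-module case would require the identical boundedness-plus-coacyclicity argument, so the reduction buys nothing. As written, the proposal has a genuine gap at its crucial final step.
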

\begin{proof}
    We may assume that $M^\natural$ is finitely generated over $A_0$; indeed, one can take a semi-free resolution which is bounded below and degreewise finite generated over $A_0$ (\cite[Proposition B.2]{AINSW}), then take a good truncation of this semi-free resolution. In particular, $|M|_i=0$ for $i\gg 0$. By \cite[Theorem 12.3.2)]{AFH}, there exists a semi-injective resolution $\epsilon\colon M\xrightarrow \simeq I$ such that $I_i=0$ for $i\gg 0$. Since any two semi-injective resolutions of $M$ are homotopy equivalent, it remains to show that $I$ is compact in $\K(\Inj A)$.

    Let $\Cone(\epsilon)$ denote the cone of $\epsilon$. By the choice of $M$ and $I$, $(\Cone(\epsilon))_i=0$ for $i\gg 0$ and acyclic. By \cite[Theorem 3.4.1 (a)]{Positselski}, $\Cone(\epsilon)$ is coacyclic, and hence $\epsilon$ is isomorphic in the coderived category of DG $A$-modules; refer to \cite[3.3]{Positselski} for the definition of the coderived category. Since $M^\natural$ is finitely generated over $A^\natural$, it follows from \cite[Theorem 3.11.2]{Positselski} that $M$ is compact in the coderived category of DG $A$-modules. Thus, $I$ is compact in $\K(\Inj A)$ as the coderived category of DG $A$-modules is equivalent to $\K(\Inj A)$; see \cite[Theorem 3.7]{Positselski}. 
\end{proof}
\begin{remark}\label{fullyfaithful}
(1) The Noetherian assumption on $A^\natural$ in Lemma \ref{compact} is needed, as it is required in the cited work \cite{Positselski}.

  (2) Let $A$ be a DG ring as Lemma \ref{compact}. The homotopy category of semi-injective DG module is equivalent to
$\D(A)$. Combining with Lemma \ref{compact}, there is a fully faithful functor $i\colon \D^f_b(A)\rightarrow \K(\Inj A)^c$ induced by
taking semi-injective resolution. 
The functor $i$ is an equivalence if $A$ is a left Noetherian ring; see \cite[Proposition 2.3]{Krause:2005}.
\end{remark}
\begin{chunk}\label{relation A_0 H_0(A)}
Set $\h_0(A)=A_0/I$. Note that every prime ideal of $\h_0(A)$ is of the form $\p/I$ for some $\p\in \Spec(A_0)$ containing $I$. Let $\p$ be a prime ideal $A_0$. If $I\subseteq \p$, the ideal $\bar\p\colonequals \p/I$ is a prime ideal of $\h_0(A)$, and $X_{\bar\p}\cong X_\p$ for each $X\in \D^f_b(A)$. If $I\nsubseteq \p$, then $X_\p=0$ for each $X\in \D^f_b(A)$. 
\end{chunk}

Note that the derived category $\D(A)$ is both $A_0$-linear and $\h_0(A)$-linear. 
\begin{proposition}\label{D^f(A)localization}
Assume, in addition, $A^\natural$ is Noetherian. Then: 
\begin{enumerate}
    \item For each $\p\in \Spec(A_0)$, we have
$$
\D^f_b(A)_\p=\D^f_b(A)/\{X\in\D^f_b(A)\mid X_\p=0 \text{ in }\D^f_b(A_\p)\}\cong\D^f_b(A_\p).
$$

\item For each $\bar\p\in \Spec(\h_0(A))$, $\D^f_b(A)_{\bar \p}\cong \D^f_b(A)_\p\cong \D^f_b(A_\p)\cong \D^f_b(A_{\bar \p})$, where $\p\colonequals\pi^{-1}(\bar \p)$ and $\pi\colon A_0\rightarrow \h_0(A)$ is the natural map. 
\end{enumerate}
\end{proposition}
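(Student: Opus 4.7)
The plan is to realize $\D^f_b(A)$ as a triangulated subcategory of $\K(\Inj A)^c$ via the semi-injective resolution functor of Remark \ref{fullyfaithful}, and then apply the general framework of Section \ref{localization-section} together with the density statement Lemma \ref{dense}. Since $A^\natural$ is Noetherian, Lemma \ref{compact} yields a fully faithful embedding $i\colon \D^f_b(A)\hookrightarrow \K(\Inj A)^c$, and $\K(\Inj A)$ is an $A_0$-linear compactly generated triangulated category. Under this identification, $\D^f_b(A)_{Z(\p)}$ in the sense of \ref{compute C_V} consists of those $X\in \D^f_b(A)$ with $\Hom^\ast_A(X,X)_\q=0$ for every $\q\subseteq\p$.

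To prove part (1), I first aim to show
$$\D^f_b(A)_{Z(\p)}=\{X\in\D^f_b(A)\mid X_\p=0\text{ in }\D^f_b(A_\p)\}.$$
By the isomorphism $\Hom^\ast_A(X,Y)_\p\cong \Hom^\ast_{A_\p}(X_\p,Y_\p)$ of \ref{localization}, the relation $\Hom^\ast_A(X,X)_\p=0$ is equivalent to $\id_{X_\p}=0$ in $\End^\ast_{A_\p}(X_\p)$, i.e.\ to $X_\p=0$. Because $\h(X)$ is finitely generated over the Noetherian ring $\h_0(A)$, if $X_\p=0$ then there exists $s\in A_0\setminus \p$ annihilating $\h(X)$, so $X_\q=0$ for every $\q\subseteq\p$; the converse is immediate by taking $\q=\p$. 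This gives the displayed equality.

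For the second equality in (1), the exact functor $L\colon \D^f_b(A)\to \D^f_b(A_\p)$, $X\mapsto X_\p$, annihilates $\D^f_b(A)_{Z(\p)}$ by the above, so it factors through an exact functor $\bar L\colon \D^f_b(A)_\p\to \D^f_b(A_\p)$. Essential surjectivity of $\bar L$ is exactly Lemma \ref{dense}. For fully faithfulness, I would combine Lemma \ref{iso}(2) with \ref{localization} to obtain natural isomorphisms
$$\Hom^\ast_{\D^f_b(A)_\p}(X,Y)\;\cong\; \Hom^\ast_{\D^f_b(A)}(X,Y)_\p\;\cong\; \Hom^\ast_{\D^f_b(A_\p)}(X_\p,Y_\p),$$
whose composite is precisely the map induced by $\bar L$.

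For part (2), fix $\bar\p\in\Spec(\h_0(A))$ and set $\p=\pi^{-1}(\bar\p)$ as in \ref{relation A_0 H_0(A)}. Since the $A_0$-action on $\D(A)$ factors through $\h_0(A)$, localizing Hom modules at $\p\subseteq A_0$ or at $\bar\p\subseteq \h_0(A)$ gives the same result, and hence $\D^f_b(A)_\p=\D^f_b(A)_{\bar\p}$. Combining this with $A_{\bar\p}=A_\p$ and applying part (1) yields the chain $\D^f_b(A)_{\bar\p}\cong\D^f_b(A)_\p\cong\D^f_b(A_\p)\cong\D^f_b(A_{\bar\p})$. The main obstacle I expect in the whole argument is the identification $\D^f_b(A)_{Z(\p)}=\{X\mid X_\p=0\}$: the BIK definition quantifies over every $\q\notin Z(\p)$, i.e.\ over every $\q\subseteq\p$, while the natural vanishing condition is at $\p$ alone; that these agree rests crucially on finite generation of $\h(X)$ over the Noetherian ring $\h_0(A)$.
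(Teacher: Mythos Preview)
Your proof is correct and follows essentially the same route as the paper: embed $\D^f_b(A)$ into $\K(\Inj A)^c$ via Remark~\ref{fullyfaithful}, identify the kernel via \ref{localization}, use Lemma~\ref{iso}(2) and \ref{localization} for fully faithfulness of $\bar L$, and Lemma~\ref{dense} for essential surjectivity. The only cosmetic difference is in identifying $\D^f_b(A)_{Z(\p)}$: you go through \ref{compute C_V} (quantifying over all $\q\subseteq\p$) and then reduce to $\q=\p$ by a finite-generation argument, whereas the paper argues directly that $X\in\C_{Z(\p)}\Leftrightarrow X=0$ in $\C_\p\Leftrightarrow \Hom^\ast_{\C_\p}(X,X)=0\Leftrightarrow X_\p=0$ via Lemma~\ref{iso}(2), which sidesteps the quantifier issue you flagged at the end.
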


\begin{proof}
(1) For each $X\in \D^f_b(A)$, combining with Remark \ref{fullyfaithful} (2), it follows from Lemma \ref{iso} and \ref{localization} that $X\in \D^f_b(A)_{Z(\p)}$ if and only if $X_\p=0$ in $\D^f_b(A_\p)$. This yields the first equality. Moreover, the localization functor $\D^f_b(A)\rightarrow \D^f_b(A_\p)$ induces a triangle functor $\pi\colon \D^f_b(A)_\p\rightarrow \D^f_b(A_\p)$. By Lemma \ref{iso}, \ref{localization} and Remark \ref{fullyfaithful}, $\pi$ is fully faithful. Lemma \ref{dense} yields that $\pi$ is dense. Hence, $\pi$ is an equivalence.

(2) For each $X\in \D^f_b(A)$, we have $X_{\bar\p}\cong X_\p$. By the definition of localization in triangulated categories, $\D^f_b(A)_{\bar \p}\cong \D^f(A)_\p$. The second equivalence in the statement follows from (1), and the third is a consequence of the isomorphism $A_{\bar \p}\cong A_\p$.
\end{proof}
\begin{remark}
    When $A$ is a commutative Noetherian ring, as noted in Example \ref{Krause's observation}, Proposition \ref{D^f(A)localization} was established by Matsui \cite[Lemma 3.2 (2)]{Matsui:2021}.
\end{remark}
For each $\p\in \Spec(A_0)$ (resp. $\p\in \Spec(\h_0(A))$) and a full subcategory $G\subseteq \D^f_b(A)$, set $\G_\p\colonequals \{G_\p\mid G\in \G\}$ (resp. $\G_{\bar\p}\colonequals \{G_{\bar\p}\mid G\in \G\}$) to be the full subcategory of $\D^f_b(A_\p)$ (resp. $\D^f_b(A_{\bar\p})$).
\begin{corollary}\label{H_0}
      Assume, in addition, $A^\natural$ is Noetherian.
For each $n\geq 0$, $X\in \D^f_b(A)$, and a full subcategory $\G\subseteq \D^f_b(A)$,  the set $\{\bar{\p} \in \Spec(\h_0(A))\mid X_{\bar \p}\in \thick^n_{\D^f_b(A_{\bar\p})}(\G_{\bar \p})\}$ is open in $\Spec(\h_0(A))$. 
\end{corollary}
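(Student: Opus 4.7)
The plan is to derive Corollary \ref{H_0} as a direct application of Theorem \ref{main}, taking as the ambient category the homotopy category of graded-injective DG $A$-modules $\T\colonequals\K(\Inj A)$. Under the hypothesis that $A^\natural$ is Noetherian, the results of Positselski recalled in Lemma \ref{compact} and Remark \ref{fullyfaithful}(2) show that $\T$ is compactly generated and that taking semi-injective resolutions yields a fully faithful functor $i\colon \D^f_b(A)\hookrightarrow \T^c$. Moreover, $\T$ is $\h_0(A)$-linear via the canonical action of the DG ring $A$ on every object, which factors through $\h_0(A)$ in the homotopy category. Let $\C$ denote the essential image of $i$, which is a triangulated subcategory of $\T^c$, and regard $X$ and $\G$ as an object and full subcategory of $\C$.

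Since $\h_0(A)$ is Noetherian by assumption and concentrated in degree zero, it qualifies as a graded-commutative Noetherian ring, so Theorem \ref{main} applies with $R=\h_0(A)$ and yields, for each $n\geq 0$, the openness in $\Spec(\h_0(A))$ of the set
$$\{\bar\p\in\Spec(\h_0(A))\mid \level_{\C_{\bar\p}}^{\G}(X)\leq n\}.$$
To match this with the set appearing in the statement, I would invoke Proposition \ref{D^f(A)localization}(2), which produces a triangle equivalence $\D^f_b(A)_{\bar\p}\xrightarrow{\simeq}\D^f_b(A_{\bar\p})$ induced by the localization functor $Y\mapsto Y_{\bar\p}$; this equivalence sends $X$ to $X_{\bar\p}$ and $\G$ to $\G_{\bar\p}$. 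By Proposition \ref{equality} on the invariance of levels under fully faithful functors, the equivalence gives
$$\level_{\C_{\bar\p}}^{\G}(X)=\level_{\D^f_b(A_{\bar\p})}^{\G_{\bar\p}}(X_{\bar\p}),$$
so that the open set provided by Theorem \ref{main} coincides with $\{\bar\p\in\Spec(\h_0(A))\mid X_{\bar\p}\in\thick^n_{\D^f_b(A_{\bar\p})}(\G_{\bar\p})\}$, as required.

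The only nontrivial point is to relate the abstract localization $\C_{\bar\p}$, viewed as a subcategory of $\T_{\bar\p}$, with the concrete category $\D^f_b(A_{\bar\p})$; this step is precisely what Proposition \ref{D^f(A)localization}(2) achieves, with Lemma \ref{dense} providing the essential surjectivity. Once this identification and the compactly generated $\h_0(A)$-linear structure on $\K(\Inj A)$ are in place, Corollary \ref{H_0} is reduced to Theorem \ref{main} with no further work.
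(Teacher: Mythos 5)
Your argument is correct and is in substance the paper's argument: both proofs embed $\D^f_b(A)$ into $\K(\Inj A)^c$ via semi-injective resolutions (Lemma \ref{compact}, Remark \ref{fullyfaithful}), invoke Theorem \ref{main}, and use Proposition \ref{D^f(A)localization} (with Lemma \ref{dense} supplying density) to identify the abstract localization with $\D^f_b(A_{\bar\p})$, so that the level condition becomes $X_{\bar\p}\in\thick^n_{\D^f_b(A_{\bar\p})}(\G_{\bar\p})$. Where you diverge is the choice of acting ring: you take $R=\h_0(A)$, and for that you must know that $\K(\Inj A)$ is $\h_0(A)$-linear, i.e.\ that the central $A_0$-action factors through $\h_0(A)$ on the homotopy category. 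This is true --- if $a=\partial(b)$ with $b\in A_1$, the degree-one $A$-linear map $x\mapsto bx$ is a homotopy from multiplication by $a$ to zero --- and it is consistent with the paper's remark that $\D(A)$ is both $A_0$- and $\h_0(A)$-linear, but the paper only records the $A_0$-linear structure on $\K(\Inj A)$, so you should spell out this short verification rather than assert it. The paper instead applies Theorem \ref{main} with $R=A_0$, obtains openness of $\{\p\in\Spec(A_0)\mid X_\p\in\thick^n_{\D^f_b(A_\p)}(\G_\p)\}$, and then pulls this open set back along the continuous map $\pi^\ast\colon\Spec(\h_0(A))\rightarrow\Spec(A_0)$ induced by $A_0\rightarrow\h_0(A)$, using Proposition \ref{D^f(A)localization} to see that the set in the statement is exactly the preimage. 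Your route yields the statement directly over $\Spec(\h_0(A))$ at the cost of the extra linearity check; the paper's route avoids any new linearity claim but needs the (easy) topological pullback step. Both are valid.
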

\begin{proof}
  Note that $\K(\Inj A)$ is an $A_0$-linear triangulated category. Moreover, $\K(\Inj A)$ is compactly generated; see \cite[Theorem 3.7 and 3.11.2]{Positselski}. By Remark \ref{fullyfaithful},  $\D^f_b(A)$ can be regarded as a triangulated subcategory of $\K(\Inj A)^c$. Combining with Proposition \ref{D^f(A)localization} and Theorem \ref{main}, the set $\{\p \in \Spec(A_0)\mid X_{\p}\in \thick^n_{\D^f_b(A_{\p})}(\G_{\p})\}$ is open in $\Spec(A_0)$. 
On the other hand, the canonical map $\pi\colon A_0\rightarrow \h_0(A)$ induces a continuous map
$$
\pi^\ast\colon \Spec(\h_0(A))\rightarrow \Spec(A_0);~ \bar\p\mapsto \pi^{-1}(\bar \p).
$$
For each $\bar\p\in \Spec(\h_0(A))$, set $\p=\pi^{-1}(\bar \p)$. By Proposition \ref{D^f(A)localization}, 
$$\{\bar\p \in \Spec(\h_0(A))\mid X_{\bar\p}\in \thick^n_{\D^f_b(A_{\bar\p})}(\G_{\bar\p})\}=(\pi^\ast)^{-1}\{\p \in \Spec(A_0)\mid X_{\p}\in \thick^n_{\D^f_b(A_{\p})}(\G_{\p})\}.$$
Thus, $\{\bar\p \in \Spec(\h_0(A))\mid X_{\bar\p}\in \thick^n_{\D^f_b(A_{\bar\p})}(\G_{\bar\p})\}$ is open.
\end{proof}

\begin{corollary}\label{local-global for D^f(A)}
    Assume, in addition, $A^\natural$ is Noetherian. For each $X\in \D^f_b(A)$ and a full subcategory $\G\subseteq \D^f_b(A)$, the following are equivalent.
\begin{enumerate}
    \item $\level^\G_{\D^f_b(A)}(X)<\infty$.

    
  \item $\level^{\G_{\bar\p}}_{\D^f_b(A_{\bar\p})}(X_{\bar\p})<\infty$ for each $\bar\p\in \Spec(\h_0(A))$.
  

    \item $\level^{\G_{\bar\m}}_{\D^f_b(A_{\bar\m})}(X_{\bar\m})<\infty$ for each $\bar\m\in \maxSpec(\h_0(A))$.
\end{enumerate}
\end{corollary}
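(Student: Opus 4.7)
The plan is to reduce the statement to Corollary \ref{local-global principle} applied to the compactly generated $A_0$-linear triangulated category $\T = \K(\Inj A)$ with $R = A_0$ and $\C = \D^f_b(A)\subseteq \K(\Inj A)^c$, identified via Remark \ref{fullyfaithful}. All hypotheses of that corollary are in place: $A_0$ is Noetherian since $A^\natural$ is, $\K(\Inj A)$ is compactly generated (as used in the proof of Corollary \ref{H_0}), and $\D^f_b(A)$ sits inside its compact part by Lemma \ref{compact}.

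The implications $(1)\Rightarrow(2)\Rightarrow(3)$ are immediate: for each $\bar\p\in \Spec(\h_0(A))$, the canonical functor $\D^f_b(A)\to \D^f_b(A_{\bar\p})$ is exact and sends $X$ to $X_{\bar\p}$ and $\G$ to $\G_{\bar\p}$, and exact functors never increase level by \ref{level}.

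For $(3)\Rightarrow(1)$, by Corollary \ref{local-global principle} it suffices to verify that $\level^\G_{\D^f_b(A)_\m}(X)<\infty$ for every $\m\in\maxSpec(A_0)$. Proposition \ref{D^f(A)localization}(1) identifies this localization with $\D^f_b(A_\m)$, sending $X$ to $X_\m$ and $\G$ to $\G_\m$, so the remaining task is the bookkeeping between $\maxSpec(A_0)$ and $\maxSpec(\h_0(A))$ through the surjection $\pi\colon A_0\twoheadrightarrow \h_0(A)$ with kernel $I$. If $I\subseteq \m$, then $\bar\m\colonequals \pi(\m)\in \maxSpec(\h_0(A))$ satisfies $\pi^{-1}(\bar\m)=\m$, so $A_\m=A_{\bar\m}$, $X_\m=X_{\bar\m}$, $\G_\m=\G_{\bar\m}$, and hypothesis (3) applies verbatim. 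If $I\not\subseteq \m$, pick $a\in I\setminus \m$; then $a$ is invertible in $(A_0)_\m$ while $\pi(a)=0$, forcing $\h_0(A)_\m=0$. Since $\h(X)$ is finitely generated over $\h_0(A)$, this yields $\h(X)_\m=\h(X)\otimes_{\h_0(A)}\h_0(A)_\m=0$, so $X_\m=0$ in $\D^f_b(A_\m)$ and the level at $\m$ is $0$. Either way the level at $\m$ is finite, completing the reduction.

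The principal obstacle is precisely this mismatch between $\maxSpec(A_0)$ (where Corollary \ref{local-global principle} hands us information) and $\maxSpec(\h_0(A))$ (where hypothesis (3) is phrased); the vanishing argument for $\h_0(A)_\m$ when $I\not\subseteq \m$ is what bridges the gap, and beyond that no new ingredient is needed besides Corollary \ref{local-global principle}, Proposition \ref{D^f(A)localization}, and finite generation of $\h(X)$ over $\h_0(A)$.
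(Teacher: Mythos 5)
Your proposal is correct and follows essentially the same route as the paper: both reduce to Corollary \ref{local-global principle} applied to the compactly generated $A_0$-linear category $\K(\Inj A)$ with $\D^f_b(A)\subseteq\K(\Inj A)^c$, identify the abstract localizations via Proposition \ref{D^f(A)localization}, and bridge $\maxSpec(A_0)$ with $\maxSpec(\h_0(A))$ by observing that localizing at a prime not containing $I=\ker(A_0\to\h_0(A))$ kills every object of $\D^f_b(A)$ (the paper records this in \ref{relation A_0 H_0(A)}). The only cosmetic difference is that the paper phrases the argument as a chain of equivalences $(1)\iff(2)$ and $(1)\iff(3)$ rather than splitting off the easy implications, and your appeal to finite generation of $\h(X)$ over $\h_0(A)$ in the acyclicity step is unnecessary but harmless.
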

\begin{proof}
    As noted in the proof of Corollary \ref{H_0}, the $A_0$-linear triangulated category $\K(\Inj A)$ is compactly generated, and $\D^f_b(A)$ can be regarded as a triangulated subcategory of $\K(\Inj A)^c$. Thus, by Corollary \ref{local-global principle}, the condition $(1)$ holds if and only if $\level^{\G_{\p}}_{\D^f_b(A_\p)}(X_\p)<\infty$ for each $\p\in \Spec(A_0)$, which is further equivalent to requiring $\level^{\G_\m}_{\D^f_b(A_\m)}(X_\m)<\infty$ for each $\m\in \maxSpec(A_0)$.

    $(1)\iff (2). $ Set $\h_0(A)=A_0/I$ and let $\p$ be a prime ideal of $A_0$. If $I\nsubseteq \p$, then $M_\p$ is acyclic for each $M\in\D^f_b(A)$. In this case, $\level^{\G_{\p}}_{\D^f_b(A_\p)}(X_\p)<\infty$ always holds. 
    If $I\subseteq \p$, then $\bar\p=\p/I$ is a prime ideal of $\h_0(A)$. By Proposition \ref{D^f(A)localization}, $\level^{\G_{\p}}_{\D^f_b(A_\p)}(X_\p)<\infty$ if and only if $\level^{\G_{\bar\p}}_{\D^f_b(A_{\bar\p})}(X_{\bar\p})<\infty$. Note that each prime ideal of $\h_0(A)$ is of the form $\p/I$ for some prime ideal $\p$ of $A_0$ containing $I$. Therefore, combining the above with the equivalences from the first paragraph of the proof,  we conclude that $(1)\iff (2)$. The same argument shows $(1)\iff (3)$.
\end{proof}

We finish this section with an example showing that even for ordinary rings, such results are false without some finiteness assumption, explaining why our main results assume objects are compact.

\begin{example}
Let $A$ be a commutative Noetherian ring of infinite Krull dimension.
According to \cite[Proposition 5.4]{Bass1962},
for every $n \in \mathbb{N}$,
there exists an $A$-module $M_n$ such that $\pd_A(M_n) = n$.
Let $M = \bigoplus_{n=1}^{\infty} M_n$.
The isomorphism $\Hom_A(M,-) \cong \prod_n \Hom_A(M_n,-)$ shows that $\pd_A(M) = +\infty$.
Let $\p \in \Spec(A)$.
For any $n \in \mathbb{N}$,
since $\pd_A(M_n) = n <\infty$,
it follows that $\pd_{A_{\p}}((M_n)_{\p}) < \infty$.
By Krull's Hauptidealsatz, the Noetherian local ring $A_{\p}$ has finite Krull dimension, so  
\cite[Theorem II.3.2.6]{RG1971} implies that
$\pd_{A_{\p}}((M_n)_{\p}) \le \dim(A_{\p})$.
Since this holds for any $n \in \mathbb{N}$,
we deduce that
$\pd_{A_{\p}}(M_{\p}) \le \dim(A_{\p}) < \infty$.
Let $\G$ be the collection of all free $A$-modules.
We have thus shown that 
$\level^{\G_\p}_{\D_b(A_{\p})}(M_{\p})<\infty$ for each $\p\in \Spec(A)$,
but $\level^\G_{\D_b(A)}(M) = +\infty$.
\end{example}

\section{A DG version of a result of Gabber}\label{Section: DG of Gabber}

 The purpose of this section is a DG version of a result of Gabber (\ref{injective locally finite}); see Theorem \ref{DG case of Gabber}. This result will be applied in Section \ref{openness section}. The proof adapts the arguments of Avramov, Iyengar, and Lipman \cite[Proposition 1.5]{Avramov-Iyengar-Lipman} to the DG setting. Throughout this section, $A$ is a non-negative commutative DG ring such that $\h_0(A)$ is Noetherian and $\h(A)$ is finitely generated over $\h_0(A)$.

\begin{chunk}\label{injective locally finite}
    Let $R$ be a commutative Noetherian ring. For each complex $X$ of $R$-modules, let $\id_R(X)$ denote its injective dimension of $X$ over $R$, as defined by Avramov and Foxby \cite[Definition 2.1.I]{Avramov-Foxby:1991}. Given $M\in\D^f_b(R)$, the following two conditions are equivalent, as shown by Gabber \cite[Proposition 1.5]{Avramov-Iyengar-Lipman}.
\begin{enumerate}
    \item For each $N\in \D^f_b(R)$, $\RHom_R(N,M)\in \D^f_b(R)$.

    \item $\id_{R_\p}(M_\p)<\infty$ for each $\p\in \Spec(R)$.
\end{enumerate}

For each $\p\in \Spec(R)$, there exists a maximal ideal $\m$ of $R$ containing $\p$. Since $X_\p$ is the localization of $X_\m$ at $\p R_\m$ for each $R$-module $X$, it follows from \cite[Lemma 3.2.5]{BH} that the above two conditions are equivalent to requiring that
\begin{itemize}
    \item $\id_{R_\m}(M_\m)<\infty$ for each $\m\in \maxSpec(R)$.
\end{itemize}

Moreover, if $\dim(R)<\infty$, then 
$\id_R(M)<\infty$ if and only if $\id_{R_\p}(M_\p)<\infty$ for each prime ideal $\p$ of $R$. The forward direction is by \cite[Corollary 3.1.3]{BH}. For the backward direction, since $\id_{R_\p}(M_\p)$ is finite for each $\p\in \Spec(R)$ and $\dim(R)<\infty$, $\sup\{\id_{R_\p}(M_\p)\mid\p\in \Spec(R)\}$ is finite (see \cite[Theorem 3.1.17]{BH}), and hence $\id_R(M)<\infty$ (see \cite[Proposition 5.3.I]{Avramov-Foxby:1991}).
    \end{chunk}


The formula in this section may look slightly different from that in \cite{Shaul2018, Shaul-TAMS} as we work with homological grading.
\begin{chunk}\label{injective-transfer}
In \cite[Definition A]{Shaul2018}, the third author extended the definition of the injective dimension of a complex introduced by Avramov and Foxby \cite[Definition 2.1.I]{Avramov-Foxby:1991} to DG settings.  
Specifically, for a DG $A$-module $X$, the \emph{injective dimension} of $X$ over $A$, denoted by  $\id_A(X)$, is defined to be  
   $$
\inf\{n\in \mathbb Z\mid \Ext^i_A(Y,X)=0 \text{ for each } Y\in \D_b(A) \text{ and each }i>n+\sup(Y)\}
;$$ see also \cite{BSSW}.
Moreover, the third author \cite[Theorem 2.5]{Shaul2018} observed that the injective dimension over $A$ could be computed via the injective dimension over $\h_0(A)$:
$$
\id_A(X)=\id_{\h_0(A)}(\RHom_A(\h_0(A), X)).
$$

\end{chunk}

\begin{theorem}\label{DG case of Gabber}
Let $A$ be a non-negative commutative DG ring such that $\h_0(A)$ is Noetherian and  $\h(A)$ is finitely generated over $\h_0(A)$. For each $X\in \D^f_b(A)$, consider the following conditions:
    \begin{enumerate}
        \item $\id_A(X)<\infty$.

        \item For each $Y\in \D^f_b(A)$, $\RHom_A(Y, X)\in \D^f_b(A)$.


        \item $\id_{A_{\bar\p}}(X_{\bar\p})<\infty$ for each $\bar\p\in \Spec(\h_0(A))$.


        \item $\id_{A_{\bar\m}}(X_{\bar\m})<\infty$ for each $\bar\m\in \maxSpec(\h_0(A))$.
    \end{enumerate}
Then we have $(1)\Longrightarrow (2)\iff(3)\iff (4)$. If, in addition, $\dim(\h_0(A))<\infty$, then the implication $(2)\Longrightarrow (1)$ also holds. 
\end{theorem}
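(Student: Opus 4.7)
The plan is to reduce all four conditions to the corresponding statements of Gabber's classical theorem recalled in \ref{injective locally finite}, applied to the Noetherian ring $\h_0(A)$ and the auxiliary DG $\h_0(A)$-module $X'\coloneqq \RHom_A(\h_0(A),X)$. The dictionary rests on three ingredients: the formula $\id_A(X)=\id_{\h_0(A)}(X')$ from \ref{injective-transfer}; the change-of-rings adjunction
\[
\RHom_A(N,X)\;\cong\;\RHom_{\h_0(A)}(N,X'),
\]
valid whenever $N$ is a DG $\h_0(A)$-module regarded as a DG $A$-module via restriction along $A\to\h_0(A)$; and the localization identity $X'_{\bar\p}\cong \RHom_{A_{\bar\p}}(\h_0(A_{\bar\p}),X_{\bar\p})$ coming from \ref{localization}. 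A further reduction observes that every $Y\in\D^f_b(A)$ lies in the thick subcategory of $\D(A)$ generated by $\D^f_b(\h_0(A))$, since the soft-truncation triangles $\tau_{\leq n-1}Y\to\tau_{\leq n}Y\to\Sigma^n\h_n(Y)$ build $Y$ in finitely many steps from suspensions of its cohomology groups $\h_n(Y)$, each finitely generated over $\h_0(A)$ by the running hypothesis on $\h(A)$.

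For $(1)\Rightarrow(3)$, the formula converts $\id_A(X)<\infty$ into $\id_{\h_0(A)}(X')<\infty$; classical localization yields $\id_{\h_0(A)_{\bar\p}}(X'_{\bar\p})<\infty$; and one more application of the formula over $A_{\bar\p}$ produces $\id_{A_{\bar\p}}(X_{\bar\p})<\infty$. The equivalence $(3)\iff(4)$ then translates, via the formula, to the classical equivalence after \ref{injective locally finite} between checking finite injective dimension of $X'$ at every prime of $\h_0(A)$ versus at every maximal ideal.

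For $(2)\Rightarrow(3)$, specializing (2) to $N=\h_0(A)$ places $X'$ in $\D^f_b(A)$ and hence in $\D^f_b(\h_0(A))$; running (2) over arbitrary $N\in\D^f_b(\h_0(A))\subseteq\D^f_b(A)$ and applying the adjunction shows that $X'$ satisfies condition (1) of \ref{injective locally finite} over $\h_0(A)$, so Gabber's theorem supplies condition (2), which the formula transfers to (3) for $X$. For the converse $(3)\Rightarrow(2)$, combining (3) with the formula first yields $X'_{\bar\p}\in\D^f_b(\h_0(A_{\bar\p}))$ of finite local injective dimension for every $\bar\p$; Gabber's theorem applied prime-by-prime gives $\RHom_{\h_0(A_{\bar\p})}(N_{\bar\p},X'_{\bar\p})\in\D^f_b(\h_0(A_{\bar\p}))$ for every $N\in\D^f_b(\h_0(A))$. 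Via the adjunction this amounts to $\RHom_A(N,X)\in\D^f_b(A)$ for such $N$, and the thick-subcategory reduction through the soft truncations extends the conclusion to all $Y\in\D^f_b(A)$.

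Finally, under $\dim(\h_0(A))<\infty$, the implication $(2)\Rightarrow(1)$ follows from $(2)\Rightarrow(3)$ combined with the last bullet of \ref{injective locally finite}: the local finiteness of injective dimension of $X'$ together with the finite Krull dimension forces $\id_{\h_0(A)}(X')<\infty$, and the formula then gives $\id_A(X)<\infty$. I expect the principal technical subtlety to lie in the direction $(3)\Rightarrow(2)$: namely, in deducing global finite generation and boundedness of the cohomology of $\RHom_A(Y,X)$ from its prime-by-prime counterparts. Finite generation is handled by the existence of bounded-below, degreewise finitely generated semi-free resolutions of objects of $\D^f_b(A)$ over $A$, which are guaranteed by the assumption that $\h(A)$ is finitely generated over the Noetherian ring $\h_0(A)$, while global boundedness is obtained by the same support-theoretic argument that underlies the classical proof of Gabber's theorem.
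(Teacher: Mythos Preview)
Your reductions for $(1)\Rightarrow(2)$, $(2)\Rightarrow(3)$, $(3)\Leftrightarrow(4)$, and (under $\dim\h_0(A)<\infty$) $(2)\Rightarrow(1)$ are correct and essentially match the paper's arguments.

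The gap is in $(3)\Rightarrow(2)$, and you have correctly located it yourself. From (3) and the formula in \ref{injective-transfer} you only know that $X'_{\bar\p}$ is bounded with finite injective dimension for each $\bar\p$; you cannot invoke the classical Gabber theorem of \ref{injective locally finite} globally because that statement requires $X'\in\D^f_b(\h_0(A))$ as a hypothesis, which is not given. Applying it prime by prime only yields that $\RHom_A(Y,X)_{\bar\p}$ is bounded for each $\bar\p$, and local boundedness of a complex with finitely generated cohomology does not imply a uniform global bound (e.g.\ a complex over $\mathbb Z$ with $\h_{-n}=\mathbb Z/p_n\mathbb Z$ for distinct primes $p_n$). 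Your sentence ``global boundedness is obtained by the same support-theoretic argument that underlies the classical proof of Gabber's theorem'' is precisely where the real work lies: that classical argument of \cite[Proposition~1.5]{Avramov-Iyengar-Lipman} rests on the finiteness of the restricted flat dimension and a depth formula for $-\inf(\RHom)$, and the paper establishes both in the DG setting as Propositions \ref{lem:Rfd-is-finite} and \ref{prop:local-bound-rhom}. The former is a substantial adaptation---involving DG Koszul complexes, sequential depth over DG rings, and Noetherian induction on $\Spec(\h_0(A))$---and is not a formal consequence of the ring case via your adjunction dictionary. With those two propositions in hand, the paper bounds $-\inf(\RHom_A(Y,X)_{\bar\p})$ directly by $\Rfd_A(Y)-\inf(X)$, uniformly in $\bar\p$, bypassing $X'$ for this implication altogether.
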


The proof of  Theorem \ref{DG case of Gabber} will be given at the end of this section. Before starting the proof, some preliminaries are needed. A key part of the proof is Proposition \ref{lem:Rfd-is-finite}.

\begin{chunk}  
Assume, in addition, $A$ is local, meaning that $\h_0(A)$ is a local ring; see \ref{local DG ring}.
For each $M \in \D^f_b(A)$, following \cite[Definition 3.1]{Shaul-TAMS}
the depth of  $M$ is defined to be
$$
\depth_A(M) = -\sup(\RHom_A(\k,M)),
$$
where $\k$ is the residue field of $\h_0(A)$.
This is related to the length of a maximal $M$-regular sequence,
called the \emph{sequential depth} of $M$ in \cite{Shaul-TAMS}, which is denoted by \( \seqdepth_A(M) \) and given by the formula
$$
\seqdepth_A(M) = \depth_A(M) +\sup(M);
$$
see \cite[Section 5]{Shaul-TAMS} for details.
\end{chunk}

The usual inequalities about the relation between depth and short exact sequences remain true in the DG setting. For example:

\begin{lemma}\label{lem:depthDis}
Assume, in addition, $A$ is local.
 Let $M_1 \to M_2 \to M_3 \to \Sigma M_1$ be an exact triangle in $\D(A)$.
Then $\depth_A(M_2) \ge \min\{\depth_A(M_1), \depth_A(M_3)\}$.
\end{lemma}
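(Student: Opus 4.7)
The plan is to reduce the inequality to a statement about the supremum of a homology complex, by applying the cohomological functor $\RHom_A(\k, -)$ to the given triangle. Since this functor is triangulated, it produces the exact triangle
$$
\RHom_A(\k, M_1) \to \RHom_A(\k, M_2) \to \RHom_A(\k, M_3) \to \Sigma \RHom_A(\k, M_1)
$$
in $\D(A)$, and hence the usual long exact sequence in homology
$$
\cdots \to \h_{i}(\RHom_A(\k, M_1)) \to \h_{i}(\RHom_A(\k, M_2)) \to \h_{i}(\RHom_A(\k, M_3)) \to \h_{i-1}(\RHom_A(\k, M_1)) \to \cdots
$$

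From this long exact sequence, vanishing of the outer two terms in a given degree forces vanishing of the middle term. Concretely, set $s_j = \sup(\RHom_A(\k, M_j))$ for $j=1,2,3$, and let $s = \max\{s_1, s_3\}$. For every $i > s$ one has $\h_i(\RHom_A(\k, M_1)) = 0$ and $\h_i(\RHom_A(\k, M_3)) = 0$, so the long exact sequence gives $\h_i(\RHom_A(\k, M_2)) = 0$. Hence $s_2 \le s$, that is,
$$
\sup(\RHom_A(\k, M_2)) \le \max\{\sup(\RHom_A(\k, M_1)), \sup(\RHom_A(\k, M_3))\}.
$$
Negating both sides and invoking the definition $\depth_A(M) = -\sup(\RHom_A(\k, M))$ yields
$$
\depth_A(M_2) \ge \min\{\depth_A(M_1), \depth_A(M_3)\},
$$
as desired. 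There is no genuine obstacle; the only mild care needed is to allow the values $\pm \infty$ for the suprema and depths, but the bound still makes sense in the extended sense and the argument above works verbatim.
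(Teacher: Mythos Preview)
Your proof is correct and follows essentially the same approach as the paper: apply $\RHom_A(\k,-)$ to the triangle and use the resulting long exact sequence in cohomology to bound $\sup(\RHom_A(\k,M_2))$. The paper's version is slightly terser, writing the three-term exact sequence in $\Ext^n_A(\k,-)$ directly, but the argument is identical.
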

\begin{proof}
Set $\k$ as the residue field of $\h_0(A)$. Applying the functor $\RHom_A(\k,-)$ we obtain an exact triangle
\[
\RHom_A(\k,M_1) \to \RHom_A(\k,M_2) \to \RHom_A(\k,M_3) \to \Sigma\RHom_A(\k,M_1),
\]
and hence, for each $n \in \mathbb{Z}$, there is an exact sequence of $\h_0(A)$-modules
\[
\Ext^n_A(\k,M_1) \to \Ext^n_A(\k,M_2) \to \Ext^n_A(\k,M_3),
\]
which proves the desired result.
\end{proof}

For each $M \in \D^f_b(A)$, following \cite[(1.01)]{Avramov-Iyengar-Lipman},
 the \emph{restricted flat dimension} of $M$ is defined to be
\[
\Rfd_A(M) := \sup\{\depth_{A_{\bar{\p}}}(A_{\bar{\p}})-\depth_{A_{\bar{\p}}}(M_{\bar{\p}})\mid \bar{\p} \in \Spec(\h_0(A))\}.
\]
The following result is a direct consequence of Lemma \ref{lem:depthDis}.
\begin{lemma}\label{lem:RfdDis}
Let $M_1 \to M_2 \to M_3 \to \Sigma M_1$ be an exact triangle in $\D(A)$.
Then $$\Rfd_A(M_2) \leq\max\{\Rfd_A(M_1), \Rfd_A(M_3)\}$$
\end{lemma}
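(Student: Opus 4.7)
The plan is to reduce the statement to the already-established inequality on depths (Lemma \ref{lem:depthDis}) applied fibrewise over $\Spec(\h_0(A))$. Since $\Rfd$ is defined as a supremum over primes $\bar{\p} \in \Spec(\h_0(A))$ of quantities involving $\depth_{A_{\bar{\p}}}(A_{\bar{\p}})$ and $\depth_{A_{\bar{\p}}}(M_{\bar{\p}})$, the natural strategy is to first prove the corresponding local inequality at every prime and then take the supremum.

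More concretely, fix $\bar{\p} \in \Spec(\h_0(A))$. Localization at $\bar{\p}$ is an exact functor on the derived category (this is standard and is implicit in how $A_{\bar\p}$ and $M_{\bar\p}$ are set up in \ref{local DG ring}), so the given exact triangle induces an exact triangle
\[
(M_1)_{\bar{\p}} \to (M_2)_{\bar{\p}} \to (M_3)_{\bar{\p}} \to \Sigma (M_1)_{\bar{\p}}
\]
in $\D(A_{\bar{\p}})$. Since $A_{\bar{\p}}$ is local (its zeroth cohomology is the local ring $(\h_0(A))_{\bar{\p}}$), Lemma \ref{lem:depthDis} applies to yield
\[
\depth_{A_{\bar{\p}}}((M_2)_{\bar{\p}}) \geq \min\{\depth_{A_{\bar{\p}}}((M_1)_{\bar{\p}}),\ \depth_{A_{\bar{\p}}}((M_3)_{\bar{\p}})\}.
\]

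Subtracting both sides from $\depth_{A_{\bar{\p}}}(A_{\bar{\p}})$ (which flips the inequality and turns $\min$ into $\max$), one obtains
\[
\depth_{A_{\bar{\p}}}(A_{\bar{\p}}) - \depth_{A_{\bar{\p}}}((M_2)_{\bar{\p}}) \leq \max_{i \in \{1,3\}}\bigl\{\depth_{A_{\bar{\p}}}(A_{\bar{\p}}) - \depth_{A_{\bar{\p}}}((M_i)_{\bar{\p}})\bigr\}.
\]
The right-hand side is at most $\max\{\Rfd_A(M_1),\Rfd_A(M_3)\}$ by definition of the restricted flat dimension. Taking the supremum of the left-hand side over all $\bar{\p} \in \Spec(\h_0(A))$ gives the desired inequality
\[
\Rfd_A(M_2) \leq \max\{\Rfd_A(M_1),\Rfd_A(M_3)\}.
\]

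There is no real obstacle here: the argument is a direct transcription of the classical module-theoretic proof into the DG setting, and the only substantive input is Lemma \ref{lem:depthDis}, whose proof in turn just applies $\RHom_A(\k,-)$ to the triangle. The only minor point to be careful about is to remember to take a minus sign when subtracting from $\depth_{A_{\bar\p}}(A_{\bar\p})$ so that $\min$ becomes $\max$ on the right, and to handle the (allowed) case where some $\Rfd$ is $\pm\infty$ without issue, since the inequality still holds in the extended real line.
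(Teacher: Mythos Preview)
Your proof is correct and matches the paper's approach exactly: the paper simply states that the lemma is a direct consequence of Lemma \ref{lem:depthDis}, and your argument spells out precisely that deduction by localizing the triangle at each $\bar{\p}$, applying the depth inequality, and taking the supremum.
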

The following result is a DG version of \cite[Theorem 1.1]{Avramov-Iyengar-Lipman}.

\begin{proposition}\label{lem:Rfd-is-finite}
For each $M \in \D^f_b(A)$.
Then $\Rfd_A(M) <\infty$.
\end{proposition}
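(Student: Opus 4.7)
The plan is to adapt the strategy of Avramov--Iyengar--Lipman \cite[Theorem 1.1]{Avramov-Iyengar-Lipman}, reducing the DG computation of $\Rfd_A(M)$ to the classical statement over the Noetherian ring $\h_0(A)$. The central technical tool is the depth transfer identity
\[
\depth_{A_{\bar\p}}(N) \;=\; \depth_{\h_0(A)_{\bar\p}}\bigl(\RHom_{A_{\bar\p}}(\h_0(A)_{\bar\p},\,N)\bigr), \qquad N \in \D(A_{\bar\p}),
\]
which follows from the adjunction $\RHom_A(\k,-) \simeq \RHom_{\h_0(A)}(\k, \RHom_A(\h_0(A),-))$, where $\k$ is the residue field of the local ring $\h_0(A)_{\bar\p}$.

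First I would apply Lemma \ref{lem:RfdDis} to the standard truncation triangles $\tau_{\geq \sup(M)}M \to M \to \tau_{<\sup(M)}M \to \Sigma\tau_{\geq\sup(M)}M$ to induct on $\amp(M)$ and reduce to the case where $M$ is concentrated in a single homological degree, i.e., $M \cong \Sigma^n N$ for some finitely generated $\h_0(A)$-module $N$ viewed as a DG $A$-module. Since $\depth$ shifts additively under suspension, $\Rfd_A(\Sigma^n N) = \Rfd_A(N) + n$, so one may further assume $n=0$. Setting $K := \RHom_A(\h_0(A), A)$ and $K_N := \RHom_A(\h_0(A), N)$, applying the transfer identity to both $A_{\bar\p}$ and $N_{\bar\p}$ and adding and subtracting $\depth_{\h_0(A)_{\bar\p}}\h_0(A)_{\bar\p}$ splits $\Rfd_A(N)$ as (at most) a sum of two suprema, each of the form $\sup_{\bar\p}\bigl(\depth_{\h_0(A)_{\bar\p}}\h_0(A)_{\bar\p} - \depth_{\h_0(A)_{\bar\p}}(Y_{\bar\p})\bigr)$ for $Y \in \{K, K_N\}$. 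Provided $K$ and $K_N$ lie in $\D^f_b(\h_0(A))$, each summand is a classical restricted flat dimension and is finite by Avramov--Iyengar--Lipman applied to $\h_0(A)$.

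The principal obstacle is that the intermediate objects $K = \RHom_A(\h_0(A), A)$ and $K_N = \RHom_A(\h_0(A), N)$ need not lie in $\D^f_b(\h_0(A))$: in general $\h_0(A)$ has infinite projective dimension as a DG $A$-module (for instance when $A$ is a nontrivial Koszul extension such as the exterior algebra $\Lambda_\k(\epsilon)$ with $|\epsilon|=1$), so these $\RHom$'s can have unbounded cohomology. Overcoming this is the heart of the proof, and I expect it to rely on the finite generation of $\h(A)$ over $\h_0(A)$: locally this allows one to present $A_{\bar\p}$ as a quotient of a regular local DG ring by finitely many elements, giving bounded Koszul-type approximations that control the relevant depths. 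Alternatively, one could avoid $\RHom_A(\h_0(A),-)$ entirely by working with the sequential depth $\seqdepth_A(M) = \depth_A(M) + \sup(M)$ of \cite{Shaul-TAMS} together with a tensor-product transfer formula $\seqdepth_{A_{\bar\p}}(M_{\bar\p}) = \depth_{\h_0(A)_{\bar\p}}((M \otimes^L_A \h_0(A))_{\bar\p})$; the extra $\sup(M)$ term is absorbed into a bounded correction since $\sup(M_{\bar\p}) - \sup(A_{\bar\p}) \leq \sup(M)$, and again the problem is reduced to a classical $\Rfd$-finiteness statement for a bounded-above approximation to $\h_0(A)$.
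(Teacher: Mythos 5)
Your opening reduction (truncation triangles plus Lemma \ref{lem:RfdDis} to get down to $\amp(M)=0$, i.e.\ a finitely generated $\h_0(A)$-module) is exactly the paper's first step, and your depth-transfer identity $\depth_{A}(N)=\depth_{\h_0(A)}(\RHom_A(\h_0(A),N))$ is correct as a formula. But the argument stops precisely where the real work begins, and you say so yourself: the objects $K=\RHom_A(\h_0(A),A)$ and $K_N=\RHom_A(\h_0(A),N)$ (and likewise $M\otimes^{\rm L}_A\h_0(A)$ in your alternative) are in general \emph{not} in $\D^f_b(\h_0(A))$ --- over the exterior algebra they have homology in infinitely many degrees --- so the classical Avramov--Iyengar--Lipman finiteness theorem over $\h_0(A)$ simply does not apply to them, and there is no ``classical $\Rfd$-finiteness statement for a bounded-above approximation'' to fall back on. The two escape routes you sketch (presenting $A_{\bar\p}$ by a regular DG ring with Koszul-type approximations; a sequential-depth tensor transfer formula) are neither justified nor carried out, and the second is not even a correct identity as stated. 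In addition, your splitting of $\Rfd_A(N)$ into two suprema produces the term $\sup_{\bar\q}\bigl(\depth_{\h_0(A)_{\bar\q}}(K_{\bar\q})-\depth(\h_0(A)_{\bar\q})\bigr)$, which is not a restricted flat dimension and is not obviously finite either; so even granting boundedness of $K$, the bookkeeping is incomplete. As written, the proposal identifies the difficulty but does not prove the proposition.

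For comparison, the paper does not route the problem through $\RHom_A(\h_0(A),-)$ at all. After the same reduction to a finitely generated $\h_0(A)$-module, it runs the Avramov--Iyengar--Lipman Noetherian-induction-by-contradiction argument \emph{directly in the DG setting}: one reduces to $M=\h_0(A)/\bar\p$ with $\Rfd_A(\h_0(A)/\bar I)<\infty$ for all ideals $\bar I$ strictly containing $\bar\p$, forms the DG Koszul complex $K(A;\bar{\x})$ on generators of $\bar\p$, and uses generic freeness to find $\bar f\notin\bar\p$ with $\h_i(K(A;\bar{\x}))_{\bar f}$ free over $(\h_0(A)/\bar\p)_{\bar f}$. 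A three-case analysis on $\bar\q$ then bounds $\depth_{A_{\bar\q}}(A_{\bar\q})-\depth_{A_{\bar\q}}((\h_0(A)/\bar\p)_{\bar\q})$: by $-\infty$ when $\bar\p\nsubseteq\bar\q$, by $j=\Rfd_A(\h_0(A)/(\bar\p,\bar f))$ when $\bar f\in\bar\q$ (via the multiplication-by-$\bar f$ triangle and Lemma \ref{lem:depthDis}), and by $\amp(A)+g$ when $\bar f\notin\bar\q$, using the sequential-depth identity $\seqdepth_{A_{\bar\q}}(K(A_{\bar\q};\bar{\x}))=\seqdepth_{A_{\bar\q}}((\h_0(A)/\bar\p)_{\bar\q})$ together with the compactness of the Koszul complex to compute $\depth_{A_{\bar\q}}(K(A_{\bar\q};\bar{\x}))$ in terms of $\depth_{A_{\bar\q}}(A_{\bar\q})$ and $\sup(\k\otimes^{\rm L}_{A_{\bar\q}}K(A_{\bar\q};\bar{\x}))$. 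If you want to salvage your approach you would need a genuinely new finiteness statement for the unbounded objects $\RHom_A(\h_0(A),-)$; the paper's route avoids needing any such statement.
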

\begin{proof}
By taking truncations, any $M\in\D^f_b(A)$ with $\amp(M) > 0$ fits into an exact triangle
\[
M^\prime \to M \to M^{\prime\prime} \to \Sigma M^\prime
\]
in $\D^f_b(A)$ with $\amp(M^\prime) < \amp(M)$ and $\amp(M^{\prime\prime}) < \amp(M)$.
Combining this with Lemma \ref{lem:RfdDis} and induction,
we may assume that $\amp(M) = 0$.
Then, by shifting, we may assume that $M$ is a finitely generated $\h_0(A)$-module.
Assuming by contradiction that there exists $M \in \D^f_b(A)$ such that $\Rfd_A(M) = \infty$.
Arguing as in the proof of \cite[Theorem 1.1]{Avramov-Iyengar-Lipman}, we may assume that $M = \h_0(A)/\bar{\p}$,
where $\bar{\p} \in \Spec(\h_0(A))$,
and that for any ideal $\bar{I}\in\h_0(A)$ strictly containing $\bar{\p}$,
it holds that $\Rfd_A(\h_0(A)/\bar{I}) <\infty$.

Let $\bar{\x}$ be a finite sequence of elements in $\h_0(A)$ that generate the ideal $\bar{\p}$,
and consider the Koszul complex $ K(A;\bar{\x})$ studied in \cite[Section 2]{Shaul2021-Adv}.
Since $K(A;\bar{\x})$ is a non-negative commutative Noetherian DG ring with bounded homology
and $\h_0(K(A;\bar{\x})) = \h_0(A)/{\bar{\p}}$ is an integral domain, 
we can choose an element $\bar{f} \in \h_0(A)$ with $\bar{f} \notin \bar{\p}$
such that the localization $(\h_i(K(A;\bar{\x})))_{\bar{f}}$ is a finitely generated free $\h_0(K(A;\bar{\x}))_{\bar{f}}$-module for all $i$; this follows from the same argument as in the proof of \cite[Proposition 7.22]{Jacobson}.
Let $j = \Rfd_A(\h_0(A)/(\bar{\p},\bar{f})) <\infty$.

We will show that $\Rfd_A(\h_0(A)/\bar{\p}) < \infty$ by following the approach in the proof of \cite[Theorem 1.1]{Avramov-Iyengar-Lipman} and establishing a uniform bound for the  following invariant
\begin{equation}\label{eqn:to-be-bounded}
\depth_{A_{\bar{\q}}}(A_{\bar{\q}})-\depth_{A_{\bar{\q}}}((\h_0(A)/\bar{\p})_{\bar{\q}})
\end{equation}
for all $\bar{\q} \in \Spec(\h_0(A))$. Once the uniform bound is established, it follows that \(\Rfd_A(\h_0(A)/\bar{\p})\) is finite.
We proceed by considering three cases.

Case 1: $\bar{\p} \nsubseteq \bar{\q}$. In this case, we have that 
$(\h_0(A)/\bar{\p})_{\bar{\q}} = 0$,
and hence $\depth_{A_{\bar{\q}}}((\h_0(A)/\bar{\p})_{\bar{\q}}) = \infty$. It follows from this that
the equation (\ref{eqn:to-be-bounded}) is equal to $-\infty$.

Case 2: $\bar{\p} \subseteq \bar{\q}$, and $\bar{f} \in \bar{\q}$.
In this case, since $\h_0(A/\bar{\p})$ is an integral domain,
there is an exact triangle
\[
\h_0(A/\bar{\p}) \xrightarrow{\bar{f}} \h_0(A/\bar{\p}) \to \h_0(A)/(\bar{\p},\bar{f}) \to \Sigma\h_0(A/\bar{\p})
\]
in $\D(A)$,
which implies using Lemma \ref{lem:depthDis}
that the equation (\ref{eqn:to-be-bounded}) is less or equal to $j$.

Case 3: $\bar{\p} \subseteq \bar{\q}$ and $\bar{f} \notin \bar{\q}$.
In this case, as in the proof of \cite[Theorem 1.1]{Avramov-Iyengar-Lipman},
we claim that
\begin{equation}\label{essential equality}
    \depth_{A_{\bar{\q}}}(K(A_{\bar{\q}};\bar{\x})) = \depth_{A_{\bar{\q}}}(\h_0(A)_{\bar{\q}}/\bar{\p}_{\bar{\q}}) -\sup(K(A_{\bar{\q}};\bar{\x})).
\end{equation}
Rearranging this, and using the fact that $\h_0(A)_{\bar{\q}}/\bar{\p}_{\bar{\q}}$ is concentrated in degree $0$,
it is equivalent to the claim that
\[
\seqdepth_{A_{\bar{\q}}}(K(A_{\bar{\q}};\bar{\x})) = \seqdepth_{A_{\bar{\q}}}(\h_0(A)_{\bar{\q}}/\bar{\p}_{\bar{\q}}),
\]
which holds because all homologies of $K(A_{\bar{\q}};\bar{\x})$ are finitely generated free as $\h_0(A)_{\bar{\q}}/\bar{\p}_{\bar{\q}}$-modules.
On the other hand, 
let $\k$ denote the residue field of the local ring
$\h_0(A)_{\bar{\q}}$.
Consider
\begin{align*}
    \depth_{A_{\bar{\q}}}(K(A_{\bar{\q}};\bar{\x})) &= 
-\sup\left( \RHom_{A_{\bar{\q}}}(\k, K(A_{\bar{\q}};\bar{\x})) \right) \\
&=-\sup\left( \RHom_{A_{\bar{\q}}}(\k,A_{\bar{\q}}) \otimes^{\rm L}_{A_{\bar{\q}}} K(A_{\bar{\q}};\bar{\x}) \right) \\
&=-\sup\left( \RHom_{A_{\bar{\q}}}(\k,A_{\bar{\q}}) \otimes^{\rm L}_{\k} \k \otimes^{\rm L}_{A_{\bar{\q}}} K(A_{\bar{\q}};\bar{\x}) \right)  \\
&=\depth_{A_{\bar{\q}}}(A_{\bar{\q}}) -\sup\left(\k \otimes^{\rm L}_{A_{\bar{\q}}} K(A_{\bar{\q}};\bar{\x})\right),
\end{align*}
where the first equality is by definition,
the second one is because the Koszul complex is compact,
the third one is by associativity of the derived tensor product,
and the fourth one is because $\k$ is a field.
The above equalities yield the second equality below:
\begin{align*}
  \depth_{A_{\bar{\q}}}(A_{\bar{\q}})-\depth_{A_{\bar{\q}}}((\h_0(A)/\bar{\p})_{\bar{\q}})& = 
\depth_{A_{\bar{\q}}}(A_{\bar{\q}}) - \depth_{A_{\bar{\q}}}(K(A_{\bar{\q}};\bar{\x}))-\sup(K(A_{\bar{\q}};\bar{\x}))  \\
&=\sup\left(\k \otimes^{\rm L}_{A_{\bar{\q}}} K(A_{\bar{\q}};\bar{\x})\right)  -\sup(K(A_{\bar{\q}};\bar{\x}))\\
&\le
\sup\left(\k \otimes^{\rm L}_{A_{\bar{\q}}} K(A_{\bar{\q}};\bar{\x})\right) \\
&\le \amp(A) + g, 
\end{align*}
where the first equality is by (\ref{essential equality}) and $g$ is the length of the sequence $\bar{\x}$.
This establishes the uniform bound.
\end{proof}

The following result is a DG version of \cite[Lemma 1.4]{Avramov-Iyengar-Lipman}.

\begin{proposition}\label{prop:local-bound-rhom}
Assume, in addition, $A$ is local.
Let $M, N \in \D^f_b(A)$ with
 $\id_A(N) < \infty$.
Then 
\[
-\inf(\RHom_A(M,N)) =  \depth_A(A) - \depth_A(M)-\inf(N).
\]
\end{proposition}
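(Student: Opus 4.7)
The plan is to adapt the strategy of the classical proof of \cite[Lemma 1.4]{Avramov-Iyengar-Lipman}: reduce the identity to the case $M=\k$ by a double induction, where it becomes a DG version of Bass's formula.

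For the outer induction, I would induct on $\amp(M)$. When $\amp(M) > 0$, a soft truncation at a carefully chosen level $n$ yields an exact triangle
$M' \to M \to M'' \to \Sigma M'$ in $\D^f_b(A)$
with $\amp(M') < \amp(M)$ and $\amp(M'') < \amp(M)$. Applying $\RHom_A(-,N)$ and passing to the long exact sequence on $\h_{*}$ gives an inequality on $\inf(\RHom_A(M,N))$ in terms of $\inf(\RHom_A(M',N))$ and $\inf(\RHom_A(M'',N))$, while Lemma \ref{lem:depthDis} provides the corresponding inequality for $\depth_A$. Choosing $n$ so that $\depth_A(M) = \min\{\depth_A(M'),\depth_A(M'')\}$ is attained on the same side that controls $\inf(\RHom_A(M,N))$, the formula for $M$ follows from applying the inductive hypothesis to $M'$ and $M''$.

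Once $\amp(M) = 0$, $M$ is (up to a suspension $\Sigma^n$, whose effect is to shift both sides of the formula by $n$) a finitely generated $\h_0(A)$-module $\bar M$. A Noetherian induction via a prime filtration of $\bar M$, using triangles in $\D^f_b(A)$ coming from short exact sequences of $\h_0(A)$-modules and Lemma \ref{lem:depthDis}, reduces to the case $\bar M = \h_0(A)/\bar\p$ for some $\bar\p \in \Spec(\h_0(A))$. A further induction on $\dim(\h_0(A)/\bar\p)$, using a non-zerodivisor $\bar f$ on $\h_0(A)/\bar\p$ and the exact triangle $\h_0(A)/\bar\p \xrightarrow{\bar f} \h_0(A)/\bar\p \to \h_0(A)/(\bar\p,\bar f)$, eventually reduces the problem to $M=\k$.

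For this base case, $\depth_A(\k)=0$ and $\inf(\k)=0$, so the identity collapses to the DG Bass formula
$-\inf(\RHom_A(\k,N)) = \depth_A(A) - \inf(N)$.
Combining the change-of-rings isomorphism $\RHom_A(\k,N)\simeq \RHom_{\h_0(A)}(\k,\RHom_A(\h_0(A),N))$ with the identity $\id_A(N)=\id_{\h_0(A)}(\RHom_A(\h_0(A),N))$ from \ref{injective-transfer} converts the statement into the classical Bass-Foxby formula over the Noetherian local ring $\h_0(A)$. What remains is the residual identification
$$\depth_A(A) - \inf(N) \;=\; \depth(\h_0(A)) - \inf(\RHom_A(\h_0(A),N)),$$
which I would extract from the sequential depth machinery of \cite{Shaul-TAMS}, using $\seqdepth_A(M)=\depth_A(M)+\sup(M)$ and the comparison between $A$-depth and $\h_0(A)$-depth of finitely generated objects of finite injective dimension.

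The main obstacle is precisely this final identification in the base case: the classical Bass formula gives the statement with $\depth(\h_0(A))$ in place of $\depth_A(A)$, and bridging the two amounts to showing that $\inf(\RHom_A(\h_0(A),N)) - \inf(N)$ is a constant (depending only on $A$) whenever $\id_A(N) < \infty$. This is the genuinely DG input of the proposition; once it is in place, the inductive reductions above are routine adaptations of the classical argument.
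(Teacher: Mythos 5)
Your plan has a genuine gap exactly where you flag it, and that gap is not a technicality: the base-case identity $\depth_A(A)-\inf(N)=\depth(\h_0(A))-\inf\bigl(\RHom_A(\h_0(A),N)\bigr)$ is essentially the case $M=\h_0(A)$ of the proposition itself, so deducing the proposition from it is circular unless you supply an independent proof. It does not follow from the sequential-depth formalism in any routine way, because $A$-depth of objects coming from $\h_0(A)$ is not computed over $\h_0(A)$: by adjunction $\RHom_A(\k,\h_0(A))\cong\RHom_{\h_0(A)}(\k\otimes^{\rm L}_A\h_0(A),\h_0(A))$, and $\k\otimes^{\rm L}_A\h_0(A)$ is not $\k$ (it carries higher homology whenever $A$ is not a ring), so $\depth_A(\h_0(A))\neq\depth_{\h_0(A)}(\h_0(A))$ in general; the discrepancy is precisely the DG correction your bridge identity must quantify. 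In addition, the inductive reductions are less routine than claimed: a triangle only gives the inequalities $\depth_A(M)\geq\min\{\depth_A(M'),\depth_A(M'')\}$ (Lemma \ref{lem:depthDis}) and $-\inf\RHom_A(M,N)\leq\max\{-\inf\RHom_A(M',N),-\inf\RHom_A(M'',N)\}$, and converting these into the required \emph{equality} needs an argument about when the extremum is attained and the connecting maps do not kill the extremal homology; "choosing $n$ so that the minimum is attained on the same side" is not justified, and in the prime-filtration and dimension-induction steps you have no truncation parameter to choose at all.

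For comparison, the paper's proof avoids induction entirely and, in particular, never compares $\depth_A(A)$ with $\depth(\h_0(A))$. Since $\RHom_A(M,N)\in\D^f_b(A)$, one has $-\inf\RHom_A(M,N)=-\inf\bigl(\RHom_A(M,N)\otimes^{\rm L}_A\k\bigr)$; because $\id_A(N)<\infty$, \cite[Lemma 2.7(2)(i)]{BSSW} gives $\RHom_A(M,N)\otimes^{\rm L}_A\k\cong\RHom_A(\RHom_A(\k,M),N)$, and adjunction along $A\to\k$ plus the fact that $\k$ is a field yield $-\inf\RHom_A(M,N)=-\inf\RHom_A(\k,N)-\depth_A(M)$ for \emph{every} $M\in\D^f_b(A)$. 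Specializing this formula to $M=A$ eliminates the unknown constant $-\inf\RHom_A(\k,N)$ --- which is exactly the quantity your approach gets stuck identifying --- and subtracting the two instances gives the stated equality. If you want to salvage your outline, the realistic route is to prove your "DG Bass formula" $-\inf\RHom_A(\k,N)=\depth_A(A)-\inf(N)$ directly by this tensor-evaluation argument, at which point the whole induction becomes unnecessary.
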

\begin{proof}
By assumption, $\RHom_A(M,N)\in \D^f_b(A)$. Combining this with the proof of \cite[Proposition 3.1]{Yekutieli}, we get the first equality below:
\begin{align*}
    -\inf(\RHom_A(M,N))& = -\inf(\RHom_A(M,N) \otimes^{\rm L}_A \h_0(A)) \\
&=-\inf(\RHom_A(M,N) \otimes^{\rm L}_A \h_0(A) \otimes^{\rm L}_{\h_0(A)} \k) \\ 
&=-\inf(\RHom_A(M,N) \otimes^{\rm L}_A \k),
\end{align*}
where the second equality is by \cite[Lemma A.4.3]{Avramov-Iyengar-Lipman}.
Since $\id_A(N)<\infty$, it follows from  \cite[Lemma 2.7(2)(i)]{BSSW} that
\[
\RHom_A(M,N) \otimes^{\rm L}_A \k \cong
\RHom_A(\RHom_A(\k,M),N),
\]
where $\k$ is the residue field of $\h_0(A)$.
Using the adjunction along the map $A \to \k$, 
we have
\[
\RHom_A(\RHom_A(\k,M),N) \cong
\RHom_{\k}(\RHom_A(\k,M),\RHom_A(\k,N)).
\]
Since $\k$ is a field, 
there is an equality
\begin{align*}
   -\inf\left(\RHom_{\k}(\RHom_A(\k,M),\RHom_A(\k,N))\right)& = 
-\inf(\RHom_A(\k,N))) +\sup(\RHom_A(\k,M)) \\
&=-\inf(\RHom_A(\k,N))) - \depth_A(M). 
\end{align*}
Thus, for any $M \in \D^f_b(A)$, we conclude that
\begin{equation}\label{inf-RHom(M,N)}
    -\inf(\RHom_A(M,N)) = 
-\inf(\RHom_A(\k,N))) - \depth_A(M).
\end{equation}
Applying this to $M = A$,
we get
$
-\inf(N) = 
-\inf(\RHom_A(\k,N))) - \depth_A(A).
$
The desired result follows by combining this with (\ref{inf-RHom(M,N)}).
\end{proof}

\begin{proof}[Proof of Theorem \ref{DG case of Gabber}]
    The implication $(1)\Longrightarrow(2)$ is by definition. 
    The implication $(3)\Longrightarrow (4)$ is trivial. Using the same argument in \ref{injective locally finite}, the implication $(4)\Longrightarrow (3)$ follows from the equality in \ref{injective-transfer}.
    
    $(2)\Longrightarrow (3)$. By assumption, $\RHom_A(\h_0(A),X)$ is in $\D^f_b(A)$, and hence it is in $\D^f_b(\h_0(A))$. For each $M\in \D^f_b(\h_0(A))$, there is an isomorphism
    $$
\RHom_{\h_0(A)}(M,\RHom_A(\h_0(A),X))\cong \RHom_A(M,X)\in\D^f_b(A).
$$
This yields that $\RHom_{\h_0(A)}(M,\RHom_A(\h_0(A),X))\in \D^f_b(\h_0(A))$ for each $M\in \D^f_b(\h_0(A))$. For each $\bar\p\in \Spec(\h_0(A))$, it follows from \ref{injective locally finite} that $\id_{\h_0(A)_{\bar\p}}(\RHom_{A}(\h_0(A),X)_{\bar\p})<\infty$, and hence $\id_{\h_0(A_{\bar\p})}(\RHom_{A_{\bar\p}}(\h_0(A_{\bar\p}),X_{\bar\p}))<\infty$ (see \ref{localization}). By \ref{injective-transfer}, $\id_{A_{\bar\p}}(X_{\bar\p})<\infty$ for each $\bar\p\in \Spec(\h_0(A))$. 

$(3)\Longrightarrow (2)$. It is enough to show that $\RHom_A(Y,X)$ has bounded homology for each $Y\in\D^f_b(A)$. This is equivalent to show that $-\inf(\RHom_A(Y,X))<\infty$.  
To see this, note that for $\bar{\p} \in \Spec(\h_0(A))$,
Proposition \ref{prop:local-bound-rhom} implies that
\begin{align*}
-\inf\left(\RHom_A(Y,X)_{\bar{\p}}\right)&=  
-\inf\left(\RHom_{A_{\bar{\p}}}(Y_{\bar{\p}},X_{\bar{\p}})\right) \\
 &=\depth_{A_{\bar{\p}}}(A_{\bar{\p}}) - \depth_{A_{\bar{\p}}}(Y_{\bar{\p}}) -\inf(X_{\bar{\p}}) \\
 &\le
  \left(\depth_{A_{\bar{\p}}}(A_{\bar{\p}}) - \depth_{A_{\bar{\p}}}(Y_{\bar{\p}}) \right)-\inf(X)
\end{align*}
By Proposition \ref{lem:Rfd-is-finite},
the latter is uniformly bounded by the finite number $\Rfd_A(Y)-\inf(X)$,
which proves the result.

Assume $(2)$ holds and $\dim(\h_0(A))<\infty$. It follows from the argument in the proof of $(2)\Longrightarrow (4)$ that $\id_{\h_0(A)_{\bar\p}}(\RHom_{A}(\h_0(A),X)_{\bar\p})<\infty$ for each $\bar\p\in \Spec(\h_0(A))$. Note that the assumption yields $\RHom_A(\h_0(A),X)\in \D^f_b(\h_0(A))$.  Combining this with $\dim(\h_0(A))<\infty$, it follows from \ref{injective locally finite} that $\id_{\h_0(A)}(\RHom_A(\h_0(A),X))<\infty$, and hence (1) holds by \ref{injective-transfer}. 
\end{proof}

\section{Openness of the finite injective dimension locus}\label{openness section}
The main result in this section is Theorem \ref{T1} from the introduction which concerns the openness of the finite injective dimension locus over DG rings; see Theorem \ref{FID open}. Without further assumption,  $A$ will be assumed to be a non-negative commutative DG ring such that $\h_0(A)$ is Noetherian and $\h(A)$ is finitely generated over $\h_0(A)$.

\begin{chunk}\label{def of point-wise dualizing}
 We say that a DG $A$-module $D\in \D^f_b(A)$ is \emph{point-wise dualizing} if it satisfies the following conditions:
   \begin{enumerate}
       \item For each $X\in \D^f_b(A)$, $\RHom_A(X,D)\in \D^f_b(A)$.

       \item For each $X\in \D^f_b(A)$, the biduality morphism $X\rightarrow\RHom_A(\RHom_A(X,D),D)$ is an isomorphism in $\D(A)$. 
   \end{enumerate}
A point-wise dualizing DG $A$-module $D$ is said to be \emph{dualizing} provided that $\id_A(D)<\infty$.

\end{chunk}
\begin{remark}\label{compare definitions}
     (1) Let $M\in \D^f_b(A)$. According to Yekutieli \cite[Definition 7.1]{Yekutieli},  $M$ is  dualizing if $\id_A(D)<\infty$ and there is a natural isomorphism $A\rightarrow \RHom_A(D,D)$ in $\D(A)$. In our DG setting of $A$, $M$ is dualizing in the sense of \ref{def of point-wise dualizing} if and only if it is dualizing in the sense of Yekutieli; see \cite[Proposition 7.17]{Yekutieli}.

   Moreover, in our DG setting of $A$, if $M$ is dualizing in the sense of \ref{def of point-wise dualizing}, then it is also dualizing in the sense of Frankild, Iyengar, and J{\o}rgensen \cite[Definition 1.8]{FIJ} (see \cite[Proposition 7.17]{Yekutieli}). Conversely, a dualizing DG module in the sense of \cite{FIJ}  is point-wise dualizing according to our definition. 
   
  (2) If, in addition, $\dim(\h_0(A))<\infty$, then Theorem \ref{DG case of Gabber} shows that a point-wise dualizing DG module is dualizing in the sense of \ref{def of point-wise dualizing}, and hence all definitions of “dualizing DG modules” mentioned in \cite{FIJ}, \cite{Yekutieli}, and \ref{def of point-wise dualizing} coincide in this case.

(3) Let $R$ be a commutative Noetherian ring and $M\in\D^f_b(R)$. Then $M$ is a (resp. point-wise) dualizing complex in the sense of \ref{def of point-wise dualizing} if and only if it is a (resp. point-wise) dualizing complex in the sense of Grothendieck and Hartshorne \cite[Chapter V]{Hartshorne}; the equivalence of the point-wise case follows from \ref{injective locally finite}.  
\end{remark}
\begin{example}\label{compare example}
    Let $R$ be a Gorenstein ring $R$ with infinite Krull dimension. As noted in \ref{def of point-wise dualizing}, $R$ is point-wise dualizing. However, $\id_R(R)=\infty$; see \cite[Theorem 3.1.17]{BH}. Therefore, $R$ is not dualizing in the sense of \cite{Hartshorne} and \cite{Yekutieli}. Such rings exist even in the context of regular rings, as shown by Nagata \cite[Appendix, Example 1]{Nagata}.
\end{example}
\begin{lemma}\label{locally biduality}
    Let  $X,Y$ be DG modules in $\D^f_b(A)$. If $\RHom_A(X,Y)\in \D^f_b(A)$, then the following conditions are equivalent.
    \begin{enumerate}
        \item The biduality morphism $X\rightarrow \RHom_A(\RHom_A(X,Y),Y)$ is an isomorphism in $\D(A)$.
        

        \item The biduality morphism $X_{\bar\p}\rightarrow \RHom_{A_{\bar\p}}(\RHom_{A_{\bar\p}}(X_{\bar\p},Y_{\bar\p}),Y_{\bar\p})$ is an isomorphism in $\D(A_{\bar\p})$ for each $\bar\p\in \Spec(\h_0(A))$.


        \item The biduality morphism $X_{\bar\m}\rightarrow \RHom_{A_{\bar\m}}(\RHom_{A_{\bar\m}}(X_{\bar\m},Y_{\bar\m}),Y_{\bar\m})$ is an isomorphism in $\D(A_{\bar\m})$ for each $\bar\m\in \maxSpec(\h_0(A))$.
    \end{enumerate} 
\end{lemma}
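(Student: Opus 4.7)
The plan is to establish the cycle $(1)\Longrightarrow(2)\Longrightarrow(3)\Longrightarrow(1)$, with essentially all of the content in the final implication $(3)\Longrightarrow(1)$. The main tool is the natural localization isomorphism $\RHom_A(M,N)_{\bar\p}\cong \RHom_{A_{\bar\p}}(M_{\bar\p},N_{\bar\p})$ from \ref{localization}, valid whenever $M,N\in\D^f_b(A)$, combined with the standard local-global principle for modules: a module over a commutative ring is zero if and only if it vanishes at every maximal ideal.

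For $(1)\Longrightarrow(2)$, I would simply apply $(-)_{\bar\p}$ to the biduality morphism $\eta\colon X\to \RHom_A(\RHom_A(X,Y),Y)$ and invoke \ref{localization} twice---once with arguments $X,Y$ and once with arguments $\RHom_A(X,Y),Y$, using the hypothesis $\RHom_A(X,Y)\in\D^f_b(A)$---to identify $\eta_{\bar\p}$ with the local biduality morphism for $X_{\bar\p}$ over $A_{\bar\p}$. The implication $(2)\Longrightarrow(3)$ is immediate by restricting to maximal ideals.

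For $(3)\Longrightarrow(1)$, the plan is to form the cone $C$ of $\eta$ in $\D(A)$ and show that $C=0$. The same two applications of \ref{localization} show that $C_{\bar\m}\cong \cone(\eta_{\bar\m})$ in $\D(A_{\bar\m})$ for every $\bar\m\in \maxSpec(\h_0(A))$, which vanishes by (3). Hence $\h_i(C)_{\bar\m}\cong \h_i(C_{\bar\m})=0$ for all $i$ and all such $\bar\m$. Since each $\h_i(C)$ is naturally an $\h_0(A)$-module that vanishes after localizing at every maximal ideal of $\h_0(A)$, the local-global principle for modules gives $\h_i(C)=0$ for all $i$. Therefore $C=0$ in $\D(A)$ and $\eta$ is an isomorphism.

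I do not expect any serious obstacle: the argument reduces entirely to the local-global principle for modules and the compatibility of $\RHom$ with localization on $\D^f_b(A)$. The only point requiring any care is verifying that \ref{localization} applies to the outer $\RHom$ in the double-dual expression, which is ensured by the hypothesis $\RHom_A(X,Y)\in\D^f_b(A)$. Note in particular that one never needs $\RHom_A(\RHom_A(X,Y),Y)$ itself to lie in $\D^f_b(A)$, since finite generation of $\h_i(C)$ plays no role in the concluding step.
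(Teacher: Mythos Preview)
Your proposal is correct and matches the paper's proof essentially line for line: both localize the biduality morphism using \ref{localization} twice (the outer application relying on the hypothesis $\RHom_A(X,Y)\in\D^f_b(A)$), and both conclude $(3)\Longrightarrow(1)$ by the local-global principle applied to the cone of $\eta$, viewed as a complex of $\h_0(A)$-modules. The only cosmetic difference is that the paper phrases the last step as ``a complex of $\h_0(A)$-modules is acyclic if and only if all its localizations at maximal ideals are'', whereas you state it in terms of the homology modules $\h_i(C)$ vanishing.
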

\begin{proof}
Since $\RHom_A(X,Y)\in \D^f_b(A)$, the implication $(1)\Longrightarrow (2)$ is by \ref{localization} and \ref{relation A_0 H_0(A)}. The implication $(2)\Longrightarrow (3)$ is trivial. It remains to prove $(3)\Longrightarrow (1)$.
For each $\bar\m\in \maxSpec(\h_0(A))$, 
    \begin{align*}
      \RHom_{A}(\RHom_A(X,Y),Y)_{\bar\m}
          &\cong \RHom_{A_{\bar\m}}(\RHom_{A}(X,Y)_{\bar\m},Y_{\bar\m})\\
          & \cong\RHom_{A_{\bar\m}}(\RHom_{A_{\bar\m}}(X_{\bar\m},Y_{\bar\m}),Y_{\bar\m}),
    \end{align*}
      where the first isomorphism is by \ref{localization} and $\RHom_A(X,Y)\in \D^f_b(A)$, and the second one is by \ref{localization}. The desired result of $(1)$ follows by combining the fact: For each complex $M$ of $\h_0(A)$-modules, $M$ is acyclic if and only if $M_{\bar\m}$ is acyclic for each $\bar\m\in \maxSpec(\h_0(A))$.
\end{proof}
\begin{proposition}\label{point-wise point-wise dualizing}
      For each $X\in\D^f_b(A)$, the following conditions are equivalent.
      \begin{enumerate}
          \item $X$ is point-wise dualizing over $A$.

          
          \item  $X_{\bar\p}$ is dualizing over $A_{\bar\p}$ for each $\bar\p\in \Spec(\h_0(A))$.

          
          \item  $X_{\bar\m}$ is dualizing over $A_{\bar\m}$ for each $\bar\m\in \maxSpec(\h_0(A))$.
      \end{enumerate}
\end{proposition}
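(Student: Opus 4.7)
\smallskip
\noindent\textbf{Proof plan.} The implication $(2)\Rightarrow(3)$ is trivial, so the real content lies in $(1)\Rightarrow(2)$ and $(3)\Rightarrow(1)$. The plan is to combine four tools already established in the paper: the density of the localization functor $\D^f_b(A)\to \D^f_b(A_{\bar\p})$ from Lemma \ref{dense}, the compatibility of $\RHom$ with localization from \ref{localization}, the local-global criterion for biduality from Lemma \ref{locally biduality}, and the DG version of Gabber's theorem, Theorem \ref{DG case of Gabber}, together with its consequence Remark \ref{compare definitions}(2) that over a DG ring whose zeroth cohomology has finite Krull dimension, point-wise dualizing and dualizing coincide.

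For $(1)\Rightarrow(2)$, fix $\bar\p\in\Spec(\h_0(A))$. To show $X_{\bar\p}$ is point-wise dualizing over $A_{\bar\p}$, I would take an arbitrary $Y\in\D^f_b(A_{\bar\p})$, use Lemma \ref{dense} to lift it to some $\tilde Y\in\D^f_b(A)$ with $\tilde Y_{\bar\p}\cong Y$, and then transfer the two defining properties: the isomorphism $\RHom_{A_{\bar\p}}(Y,X_{\bar\p})\cong \RHom_A(\tilde Y,X)_{\bar\p}$ from \ref{localization} places the left-hand side in $\D^f_b(A_{\bar\p})$ because the source lies in $\D^f_b(A)$ by hypothesis; and the biduality map for $Y$ over $A_{\bar\p}$ is precisely the localization at $\bar\p$ of the biduality isomorphism for $\tilde Y$ over $A$. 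This shows $X_{\bar\p}$ is point-wise dualizing over $A_{\bar\p}$; since the Noetherian local ring $\h_0(A_{\bar\p})=\h_0(A)_{\bar\p}$ has finite Krull dimension, Remark \ref{compare definitions}(2) then upgrades point-wise dualizing to dualizing.

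For $(3)\Rightarrow(1)$, let $Y\in \D^f_b(A)$ be arbitrary. Since each $X_{\bar\m}$ is dualizing, in particular $\id_{A_{\bar\m}}(X_{\bar\m})<\infty$, and the implication $(4)\Rightarrow(2)$ of Theorem \ref{DG case of Gabber} delivers $\RHom_A(Y,X)\in \D^f_b(A)$. With this finiteness in hand, the remaining biduality can be checked locally at maximal ideals by Lemma \ref{locally biduality} (applied with the roles of $X$ and $Y$ in that lemma taken by $Y$ and $X$ here); at each $\bar\m$ the localized biduality morphism $Y_{\bar\m}\to \RHom_{A_{\bar\m}}(\RHom_{A_{\bar\m}}(Y_{\bar\m},X_{\bar\m}),X_{\bar\m})$ is an isomorphism because $X_{\bar\m}$ is (point-wise) dualizing over $A_{\bar\m}$.

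The main obstacle, and the reason this is not merely a formal diagram chase, is the passage from local finite injective dimension to the global finiteness of $\RHom_A(Y,X)$ in $(3)\Rightarrow(1)$: this step genuinely depends on Theorem \ref{DG case of Gabber}, which in turn rests on the finiteness of the restricted flat dimension $\Rfd_A(Y)$ proved in Proposition \ref{lem:Rfd-is-finite}. Once that global finiteness is secured, both directions reduce to carefully tracking $\RHom$ under localization, which is exactly the content of Lemmas \ref{dense} and \ref{locally biduality}.
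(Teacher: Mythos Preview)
Your proposal is correct and matches the paper's own proof essentially line for line: both directions use Lemma \ref{dense} and \ref{localization} together with Lemma \ref{locally biduality} for $(1)\Rightarrow(2)$ (followed by Remark \ref{compare definitions}(2) to upgrade point-wise dualizing to dualizing over the local DG ring), and Theorem \ref{DG case of Gabber} combined with Lemma \ref{locally biduality} for $(3)\Rightarrow(1)$. The only minor omission is that Lemma \ref{dense} is literally stated for $\p\in\Spec(A_0)$, so one should invoke \ref{relation A_0 H_0(A)} to pass to $\bar\p\in\Spec(\h_0(A))$, as the paper does.
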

\begin{proof}
$(1)\Longrightarrow (2)$. Assume $X$ is point-wise dualizing over $A$. For each $\bar\p\in \Spec(\h_0(A))$ and $Y\in \D^f_b(A_{\bar\p})$, there exists $Y^\prime\in \D^f_b(A)$ such that $Y^\prime_{\bar\p}\cong Y$ in $\D^f_b(A_{\bar\p})$; see Lemma \ref{dense} and \ref{relation A_0 H_0(A)}.  According to the definition of the point-wise dualizing DG module, 
$\RHom_A(Y^\prime,X)\in \D^f_b(A)$, and hence $$\RHom_{A_{\bar\p}}(Y,X_{\bar\p})\cong \RHom_A(Y^\prime,X)_{\bar\p}\in \D^f_b(A_{\bar\p});$$ the isomorphism is from \ref{localization}. By Lemma \ref{locally biduality},
   the biduality morphism $ Y^\prime_{\bar\p}\rightarrow \RHom_{A_{\bar\p}}(\RHom_{A_{\bar\p}}(
Y^\prime_{\bar\p},X_{\bar\p}),X_{\bar\p})$ is an isomorphism.
Thus, $X_{\bar\p}$ is point-wise dualizing over $A_{\bar\p}$, and hence it is dualizing over $A_{\bar\p}$ by Remark \ref{compare definitions} (2). 

The implication $(2)\Longrightarrow (3)$ is trivial. 
  
$(3)\Longrightarrow (1)$. Let $Y$ be a DG module in $\D^f_b(A)$.  By assumption, $\id_{A_{\bar\m}}(X_{\bar\m})<\infty$ for each $\bar\m\in \maxSpec(\h_0(A))$. Combining this with Theorem \ref{DG case of Gabber},  we get $\RHom_A(Y,X)\in \D^f_b(A)$. Consequently, Lemma \ref{locally biduality} implies that the biduality morphism $Y \to \RHom_A(\RHom_A(Y, X), X)$ is an isomorphism in $\D(A)$. Thus, $X$ is point-wise dualizing over $A$.
\end{proof}
\begin{chunk}\label{ess finite type-dualizing}
   Let $R$ be a commutative Noetherian ring.
Assume $R$ has a dualizing complex. Then any $R$-algebra essentially of finite type over $R$ has a dualizing complex; see for example \cite[0A7K, Proposition 15.11]{StacksProject}.
\end{chunk}
\begin{chunk}\label{ess finite type-point-wise dualizing}
     Let $R$ be a commutative Noetherian ring with a point-wise dualizing complex. Then any $R$-algebra essentially of finite type over $R$ has a point-wise dualizing complex.
  
  This statement can be proved by following the same argument as the proof of \ref{ess finite type-dualizing} (see \cite[0A7K, Lemma 15.6, 15.9 and 15.10]{StacksProject}). Lemma 15.6 and 15.9 of \cite[0A7K]{StacksProject} can be adapted to our context similarly. It remains to establish Lemma 15.10 of \cite[0A7K]{StacksProject} in our context: assume $R$ has a point-wise dualizing complex $M\in \D^f_b(R)$, then $M[x]\colonequals M\otimes_R R[x]$ is a point-wise dualizing complex of $R[x]$. We prove this in the following:
  
  First, we show that $\id_{R[x]_\q}(M[x]_\q)<\infty$ for each $\q\in \Spec(R[x])$. 
  Let $\q$ be a prime ideal in $\Spec(R[x])$, and set $\p\colonequals\q\cap R$ (this is a prime ideal of $R$). By assumption, $\id_{R_\p}(M_\p)<\infty$. Since $M[x]_\p\cong M_\p\otimes_{R_\p}R_\p[x]$, it follows from \cite[0A6J Lemma 3.10]{StacksProject} that $\id_{R[x]_\p}(M[x]_\p)<\infty$. Let $U$ be the image of $R[x]\setminus \q$ under the map $R[x]\rightarrow R[x]_\p$. Note that $U$ is a multiplicatively closed subset of $R[x]_\p$, and $U^{-1}(M[x]_\p)\cong M[x]_\q$.
  Therefore, by \cite[Corollary 3.1.3]{BH}, $\id_{R[x]_\q}(M[x]_\q)<\infty$; see also \cite[0A6I Lemma 3.8]{StacksProject}. 
  The isomorphism $R[x]\cong \RHom_{R[x]}(M\otimes_R R[x],M\otimes_R R[x])$ is obtained as follows: 
  \begin{align*}
       R[x]\cong \RHom_R(M,M)\otimes_R R[x] 
       \cong \RHom_R(M,M[x])
       \cong \RHom_{R[x]}(M[x],M[x]),
  \end{align*}
where the second isomorphism is by \cite[Theorem 8.4.12]{CFH}, and the third one is by the adjunction. Thus, $M[x]$ is point-wise dualizing over $R[x]$.
\end{chunk}
\begin{chunk}\label{projective-transfer}
   Let $X$ be a DG $A$-module. Bird, the third author, Sridhar, and Williamson \cite[Definition 2.1]{BSSW} defined the \emph{projective dimension} of $X$ over $A$, denoted by  $\pd_A(X)$, to be  
   $$
\inf\{n\in \mathbb Z\mid \Ext^i_A(X,Y)=0 \text{ for each } Y\in \D_b(A) \text{ and each }i>n-\inf(Y)\}
.$$
This extended the definition of the projective dimension of a complex over a ring introduced by Avramov and Foxby \cite[Definition 2.1.P]{Avramov-Foxby:1991}. Moreover, similarly as the injective dimension (see \ref{injective-transfer}), it is proved in \cite[Corollary 2.3]{BSSW} that
$$
\pd_A(X)=\pd_{\h_0(A)}(\h_0(A)\otimes_A^{\rm L}X).
$$
\end{chunk}

The next lemma is contained in \cite[Theorem 14.1.33]{Yekutieli:book}. We give an alternate proof using our methods developed above.

\begin{lemma}\label{characterizatiof perfect}
   Assume, in addition, $A^\natural$ is Noetherian.  For each $X\in \D^f_b(A)$, $X\in \thick_{\D^f_b(A)}(A)$ if and only if $\pd_A(X)<\infty$. 
\end{lemma}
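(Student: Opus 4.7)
The plan is to prove both directions, with the forward implication being routine and the reverse requiring a local-global reduction followed by a minimal semi-free resolution argument. For the forward direction, observe that $\pd_A(A)=0$ directly from \ref{projective-transfer}, and that finite projective dimension is preserved under mapping cones, suspensions, and direct summands (an easy check using the long exact sequence obtained by applying $\RHom_A(-,Y)$ to a triangle, together with the direct sum decomposition of $\Ext$ over summands). A straightforward induction on the level then shows that every object in $\thick_{\D^f_b(A)}(A)$ has finite projective dimension.

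For the converse, assume $\pd_A(X)<\infty$ and apply Corollary \ref{local-global for D^f(A)} with $\G=\{A\}$: this reduces the problem to proving $X_{\bar\m}\in \thick_{\D^f_b(A_{\bar\m})}(A_{\bar\m})$ for every $\bar\m\in \maxSpec(\h_0(A))$. The required localization of the hypothesis is straightforward: combining the formula $\pd_A(X) = \pd_{\h_0(A)}(\h_0(A) \otimes^{\rm L}_A X)$ from \cite[Corollary 2.3]{BSSW} with the standard commutative-algebra fact that finite projective dimension of a bounded complex with finitely generated homology over a Noetherian ring localizes, I obtain $\pd_{A_{\bar\m}}(X_{\bar\m})<\infty$ for each $\bar\m$. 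Thus I may assume $A$ is local, i.e., $\h_0(A)$ is local with residue field $\k$, and aim to show $X\in \thick_{\D^f_b(A)}(A)$.

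In the local case, the main tool is a minimal semi-free resolution $F\xrightarrow{\simeq} X$: since $A^\natural$ is Noetherian and $A$ is local, $X\in \D^f_b(A)$ admits such a resolution with $F^\natural$ a graded-free $A^\natural$-module of finite rank in each degree, satisfying $F_i=0$ for $i<\inf(X)$, and enjoying the minimality property that the induced differential on $\k \otimes_A F$ vanishes. Consequently, $\k\otimes_A F\cong \Tor^A_*(\k,X)$ as graded $\k$-vector spaces. On the other hand, applying $\pd_A(X) = \pd_{\h_0(A)}(\h_0(A)\otimes^{\rm L}_A X)$ together with the associativity isomorphism $\Tor^A_i(\k,X)\cong \Tor^{\h_0(A)}_i(\k,\h_0(A)\otimes^{\rm L}_A X)$ and the classical fact that finite projective dimension over a local Noetherian ring forces Tor with the residue field to vanish beyond some degree, I conclude $\Tor^A_i(\k,X)=0$ for $i\gg 0$. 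Combined with the degreewise finite rank of $F^\natural$, this forces $F$ to have only finitely many semi-free generators in total.

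The main obstacle is the existence and minimality properties of the semi-free resolution in the DG setting, which crucially rely on $A^\natural$ being Noetherian and on $\h_0(A)$ being local. Once this is in hand, the conclusion is immediate: a finitely generated semi-free DG $A$-module is constructed from $A$ by finitely many iterated mapping cones along its semi-free filtration—one step for each block of generators—so it lies in $\thick_{\D^f_b(A)}(A)$. This gives $X\simeq F\in \thick_{\D^f_b(A)}(A)$, completing the proof.
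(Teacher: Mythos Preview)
Your argument is correct and follows the same overall strategy as the paper: for the converse, reduce to the local case via Corollary~\ref{local-global for D^f(A)}, and there use vanishing of (co)homology against the residue field to conclude that a minimal semi-free resolution is finite. The differences are minor. For the forward direction you argue by induction on level, while the paper instead applies the exact functor $\h_0(A)\otimes^{\rm L}_A-$ to land in $\thick_{\D^f_b(\h_0(A))}(\h_0(A))$ and then invokes the classical characterization of perfect complexes over $\h_0(A)$ together with the transfer formula in \ref{projective-transfer}; both arguments are short and neither has a real advantage. For the local converse, the paper observes that $\pd_A(X)<\infty$ gives $\Ext^i_A(X,\k)=0$ for $i\gg 0$ directly from the definition and then cites \cite[Remark~B.10]{AINSW} as a black box, whereas you unpack essentially the same content by building a minimal semi-free resolution and using $\Tor^A_i(\k,X)=0$ for $i\gg0$ (via the associativity $\k\otimes^{\rm L}_A X\cong \k\otimes^{\rm L}_{\h_0(A)}(\h_0(A)\otimes^{\rm L}_A X)$) to force finitely many generators. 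Your route is more self-contained; the paper's is shorter by outsourcing to \cite{AINSW}.
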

\begin{proof}
        By \ref{projective-transfer}, we get the first and last equivalences below:
        \begin{align*}
            \pd_A(X)<\infty& \iff \pd_{\h_0(A)}(\h_0(A)\otimes^{\rm L}_A X)<\infty\\
            &\iff \h_0(A)\otimes^{\rm L}_A X\in \thick_{\D^f_b(\h_0(A))}(\h_0(A))\\
            & \iff \h_0(A_{\bar\p})\otimes^{\rm L}_{A_{\bar\p}}X_{\bar\p}\in \thick_{\D^f_b(\h_0(A_{\bar\p}))}(\h_0(A_{\bar\p})), \forall ~\bar{\p}\in \Spec(\h_0(A))\\
            & \iff \pd_{\h_0(A_{\bar\p})}(\h_0(A_{\bar\p})\otimes^{\rm L}_{A_{\bar\p}}X_{\bar\p}) <\infty, \forall~ \bar{\p}\in \Spec(\h_0(A))\\
& \iff \pd_{A_{\bar\p}}(X_{\bar\p})<\infty, \forall~ \bar{\p}\in \Spec(\h_0(A)),
        \end{align*}
        where the second and the fourth equivalences follow from\cite[1.1]{Buchweitz/Appendix:2021},  and the third one is by
       Corollary \ref{local-global for D^f(A)}. 
        On the other hand, Corollary \ref{local-global for D^f(A)} implies that 
        $$X\in \thick_{\D^f_b(A)}(A)\iff X_{\bar\p}\in \thick_{\D^f_b(A_{\bar\p})}(A_{\bar\p}), \forall~\bar\p\in \Spec(\h_0(A)).$$ Thus, we may assume $\h_0(A)$ is local. 

Assume $\h_0(A)$ is local. If $X\in \thick_{\D(A)}(A)$, then $\h_0(A)\otimes^{\rm L}_A X\in \thick_{\D^f_b(\h_0(A))}(\h_0(A))$; see \ref{level}. It follows that $\pd_{\h_0(A)}(\h_0(A)\otimes^{\rm L}_AX)<\infty$, and hence $\pd_A(X)<\infty$ by \ref{projective-transfer}. For the converse, assume $\pd_A(X)<\infty$, it follows from the definition that $\Ext^i_A(X,\k)=0$ for $i\gg 0$, where $\k$ is the residue field of $\h_0(A)$. By \cite[Remark B.10]{AINSW}, $X\in \thick_{\D^f_b(A)}(A)$. 
\end{proof}

The proof of the above result contains the following, which is a differential graded version of a result of Bass and Murthy (\cite[Lemma 4.5]{BM67}:
\begin{corollary}
 Assume, in addition, $A^\natural$ is Noetherian.   
 For each $X\in \D^f_b(A)$, it holds that $\pd_A(X)<\infty$ if and only if 
 $\pd_{A_{\bar\p}}(X_{\bar\p})<\infty$ for each $\bar\p\in \Spec(\h_0(A))$
 if and only if $\pd_{A_{\bar\m}}(X_{\bar\m})<\infty$ for each $\bar\m\in \maxSpec(\h_0(A)$.
\end{corollary}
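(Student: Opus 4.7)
My plan is to chain together the equivalences already spelled out (though not boxed as a standalone statement) inside the proof of Lemma \ref{characterizatiof perfect}. The whole argument is essentially a transfer of the question from $A$ to the Noetherian ring $\h_0(A)$ via the formula $\pd_A(X)=\pd_{\h_0(A)}(\h_0(A)\otimes^{\mathrm L}_A X)$ of \ref{projective-transfer} (\cite[Corollary 2.3]{BSSW}), followed by an application of the local-global principle of Corollary \ref{local-global for D^f(A)} applied to the ordinary ring $\h_0(A)$.

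Concretely, I would proceed as follows. First, by \ref{projective-transfer}, for each $X\in \D^f_b(A)$,
\[
\pd_A(X)<\infty \iff \pd_{\h_0(A)}(\h_0(A)\otimes^{\mathrm L}_A X)<\infty.
\]
The complex $\h_0(A)\otimes^{\mathrm L}_A X$ lies in $\D^f_b(\h_0(A))$, so by the classical characterization of perfect complexes over a commutative Noetherian ring (see \cite[1.1]{Buchweitz/Appendix:2021}), finite projective dimension is equivalent to membership in $\thick_{\D^f_b(\h_0(A))}(\h_0(A))$. Applying Corollary \ref{local-global for D^f(A)} to the (ordinary) Noetherian ring $\h_0(A)$, the base of the DG ring, with $\G=\{\h_0(A)\}$, this membership is equivalent to the corresponding local membership at every $\bar\p\in \Spec(\h_0(A))$, and equivalent to the local membership at every $\bar\m\in \maxSpec(\h_0(A))$.

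Second, I need to transfer this local membership back to $\pd_{A_{\bar\p}}(X_{\bar\p})<\infty$. For this, I invoke the base change isomorphism $\h_0(A_{\bar\p})\otimes^{\mathrm L}_{A_{\bar\p}} X_{\bar\p}\cong (\h_0(A)\otimes^{\mathrm L}_A X)_{\bar\p}$ (which comes from the fact that localization at $\bar\p$ is an exact and flat base change along $A_0\to (A_0)_\p$; this is the same style of identification used in \ref{localization} and implicit in the proof of Lemma \ref{characterizatiof perfect}). Since thick-subcategory membership is preserved under Verdier localization, $\h_0(A)\otimes^{\mathrm L}_A X\in \thick_{\D^f_b(\h_0(A))}(\h_0(A))$ localizes to $\h_0(A_{\bar\p})\otimes^{\mathrm L}_{A_{\bar\p}} X_{\bar\p}\in \thick_{\D^f_b(\h_0(A_{\bar\p}))}(\h_0(A_{\bar\p}))$, and the converse follows from Corollary \ref{local-global for D^f(A)} applied to $\h_0(A)$. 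Reapplying \ref{projective-transfer} now over $A_{\bar\p}$ gives the final equivalence with $\pd_{A_{\bar\p}}(X_{\bar\p})<\infty$, and analogously for maximal ideals.

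The argument contains no serious obstacle, since every step is either a citation of a result already recorded in this paper (Corollary \ref{local-global for D^f(A)}, \ref{projective-transfer}, \ref{localization}) or an application of the standard fact that finite projective dimension of a bounded-above complex with finitely generated homology over a Noetherian ring is equivalent to its perfectness. The only point that warrants a line of justification is the identification $\h_0(A_{\bar\p})\otimes^{\mathrm L}_{A_{\bar\p}} X_{\bar\p}\cong (\h_0(A)\otimes^{\mathrm L}_A X)_{\bar\p}$, which reduces to flatness of $A_0\to (A_0)_\p$. In effect, the corollary is just an explicit reformulation of the chain of equivalences that appears inside the proof of Lemma \ref{characterizatiof perfect}, packaged as a standalone DG analogue of the Bass--Murthy statement.
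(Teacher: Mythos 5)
Your proposal is correct and is essentially the paper's own argument: the paper states that the corollary is contained in the proof of Lemma \ref{characterizatiof perfect}, whose displayed chain of equivalences is exactly your chain (transfer via $\pd_A(X)=\pd_{\h_0(A)}(\h_0(A)\otimes^{\mathrm L}_A X)$, Buchweitz's characterization of perfectness over the Noetherian ring $\h_0(A)$, the local-global principle of Corollary \ref{local-global for D^f(A)} applied with $\G=\{\h_0(A)\}$, and the base-change identification $(\h_0(A)\otimes^{\mathrm L}_A X)_{\bar\p}\cong \h_0(A_{\bar\p})\otimes^{\mathrm L}_{A_{\bar\p}}X_{\bar\p}$, then transferring back over $A_{\bar\p}$). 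No gaps; your explicit justification of the base-change isomorphism is the only step the paper leaves implicit.
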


As the definition of the perfect complex in \cite[1.1]{Buchweitz/Appendix:2021}, we say a DG $A$-module $X\in \D^f_b(A)$ is \emph{perfect} if $X\in \thick_{\D^f_b(A)}(A)$. 
See \cite[Chapter 14]{Yekutieli:book} and \cite[Theorem 4.16]{Shaul-Williamson} for related results about perfect DG modules.

\begin{lemma}\label{Hom of injective}
 Assume, in addition, $A^\natural$ is Noetherian. For each $X,Y\in \D^f_b(A)$, if $\id_{A_{\bar\m}}(X_{\bar\m})<\infty$ and $\id_{A_{\bar\m}}(Y_{\bar\m})<\infty$ for each $\bar\m\in \maxSpec(\h_0(A))$, then $\RHom_A(X,Y)$ is a perfect DG $A$-module.
\end{lemma}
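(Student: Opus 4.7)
The plan is first to reduce to the case where $A$ is local, and then to show the desired finiteness by computing $\RHom_A(X,Y)\otimes^{\mathrm L}_A\k$. I will begin by applying Theorem \ref{DG case of Gabber} to $Y$ to obtain $\RHom_A(X,Y)\in \D^f_b(A)$. By Lemma \ref{characterizatiof perfect} it then suffices to prove $\pd_A(\RHom_A(X,Y))<\infty$, and by the preceding Bass--Murthy-type corollary this can be checked locally at each $\bar{\m}\in \maxSpec(\h_0(A))$. Using \ref{localization} to rewrite $\RHom_A(X,Y)_{\bar{\m}}\cong \RHom_{A_{\bar{\m}}}(X_{\bar{\m}},Y_{\bar{\m}})$, I reduce to $A_{\bar{\m}}$, which inherits all the running hypotheses. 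Since $\h_0(A_{\bar{\m}})=\h_0(A)_{\bar{\m}}$ is Noetherian local and so of finite Krull dimension, Theorem \ref{DG case of Gabber} upgrades the pointwise hypotheses to the global finiteness $\id_{A_{\bar{\m}}}(X_{\bar{\m}}),\id_{A_{\bar{\m}}}(Y_{\bar{\m}})<\infty$.

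After this reduction we may assume that $A$ is local with residue field $\k$ of $\h_0(A)$ and that $\id_A(X),\id_A(Y)<\infty$. The plan for the local case is as follows. The argument of Lemma \ref{characterizatiof perfect}, combined with the $\k$-linear duality $\RHom_A(M,\k)\cong \RHom_{\k}(M\otimes^{\mathrm L}_A\k,\k)$, shows that for $M\in \D^f_b(A)$ one has $\pd_A(M)<\infty$ if and only if $M\otimes^{\mathrm L}_A\k$ is bounded. Applying \cite[Lemma~2.7(2)(i)]{BSSW} with $\id_A(Y)<\infty$ gives the swap isomorphism
$$
\RHom_A(X,Y)\otimes^{\mathrm L}_A\k \;\cong\; \RHom_A(\RHom_A(\k,X),Y).
$$
Since $\k\in \D^f_b(A)$ and $\id_A(X)<\infty$, Theorem \ref{DG case of Gabber} provides $\RHom_A(\k,X)\in \D^f_b(A)$; applying the same theorem a second time with $\id_A(Y)<\infty$ and input $\RHom_A(\k,X)\in \D^f_b(A)$ yields $\RHom_A(\RHom_A(\k,X),Y)\in \D^f_b(A)$, which is bounded. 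This will give $\pd_A(\RHom_A(X,Y))<\infty$, completing the argument via Lemma \ref{characterizatiof perfect}.

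The main subtlety I anticipate is the Ext--Tor equivalence $\pd_A(M)<\infty \iff M\otimes^{\mathrm L}_A\k$ is bounded in the DG local setting, which will need to be assembled from the Ext-with-residue-field criterion used in the proof of Lemma \ref{characterizatiof perfect} together with the $\k$-linear duality above (and \ref{projective-transfer} to pass between $A$ and $\h_0(A)$). Everything else is a direct application of Theorem \ref{DG case of Gabber}, \cite[Lemma~2.7(2)(i)]{BSSW}, and the preceding Bass--Murthy-type corollary.
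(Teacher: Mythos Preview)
Your proposal is correct and follows the same overall arc as the paper: first obtain $\RHom_A(X,Y)\in\D^f_b(A)$ via Theorem~\ref{DG case of Gabber}, reduce to the local situation (the paper cites Corollary~\ref{local-global for D^f(A)} rather than the Bass--Murthy corollary, but the effect is the same), and finally show finite projective dimension to conclude perfectness via Lemma~\ref{characterizatiof perfect}. Both proofs also hinge on the swap isomorphism from \cite[Lemma~2.7]{BSSW}.

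The genuine difference is in the endgame once one is local with $\id_A(X),\id_A(Y)<\infty$. The paper tensors with $\h_0(A)$: it rewrites $\h_0(A)\otimes^{\rm L}_A\RHom_A(X,Y)$ as $\RHom_{\h_0(A)}(\RHom_A(\h_0(A),X),\RHom_A(\h_0(A),Y))$, notes that both arguments have finite injective dimension over $\h_0(A)$ by \ref{injective-transfer}, and then invokes the classical commutative-algebra fact that $\Hom$ of injectives is flat to get finite flat (hence projective, since the complex is in $\D^f$) dimension over $\h_0(A)$, finishing via \ref{projective-transfer}. You instead tensor directly with the residue field $\k$, apply the swap with $\k$ in place of $\h_0(A)$, and simply observe that $\RHom_A(\RHom_A(\k,X),Y)$ is bounded by two uses of the finite-injective-dimension hypothesis. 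Your route is more self-contained---it avoids the $\Hom$-of-injectives-is-flat ingredient and the detour through flat dimension---at the modest cost of establishing the auxiliary equivalence $\pd_A(M)<\infty\iff M\otimes^{\rm L}_A\k$ bounded, which (as you correctly anticipate) assembles from the $\k$-linear duality $\RHom_A(M,\k)\cong\RHom_\k(M\otimes^{\rm L}_A\k,\k)$ together with the $\Ext$-with-residue-field criterion used in the proof of Lemma~\ref{characterizatiof perfect}.
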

\begin{proof}
 By Theorem \ref{DG case of Gabber}, $\RHom_A(X,Y)\in \D^f_b(A)$, and hence $\RHom_{A_{\bar\m}}(X_{\bar\m},Y_{\bar\m})\in \D^f_b(A_{\bar\m})$ for each $\bar\m\in \maxSpec(\h_0(A))$.   Combining this with Corollary \ref{local-global for D^f(A)}, we may assume $\id_A(X)<\infty$ and $\id_A(Y)<\infty$. 
 
Next, assume $\id_A(X)<\infty$ and $\id_A(Y)<\infty$. We claim that the flat dimension (see the definition in \cite[Definition 2.1]{BSSW}) of $\h_0(A)\otimes^{\rm L}_A \RHom_A(X,Y)$ over $\h_0(A)$ is finite. Consider
\begin{align*}
 \h_0(A)\otimes^{\rm L}_A \RHom_A(X.Y)& \cong \RHom_A(\RHom_A(\h_0(A), X),Y)\\
 & \cong \RHom_{\h_0(A)}(\RHom_A(\h_0(A),X),\RHom_A(\h_0(A),Y)),
\end{align*}
where the first isomorphism is by \cite[Lemma 2.7]{BSSW}, and the second one is by adjunction.  Note that both $\RHom_A(\h_0(A),X)$ and $\RHom_A(\h_0(A),Y)$ have finite injective dimension over $\h_0(A)$. Recall that $\Hom_{\h_0(A)}(I,J)$ is flat over $\h_0(A)$ for injective $\h_0(A)$-modules $I,J$; see \cite[Proposition 1.4.9]{CFH}. It follows that $\RHom_{\h_0(A)}(\RHom_A(\h_0(A),X),\RHom_A(\h_0(A),Y))$ has finite flat dimension over $\h_0(A)$, and hence $\h_0(A)\otimes^{\rm L}_A \RHom_A(X, Y)$ has finite flat dimension over $\h_0(A)$. 
Note that $\h_0(A)\otimes^{\rm L}_A \RHom_A(X,Y)\in \D^f(\h_0(A))$ as $\RHom_A(X,Y)\in \D^f_b(A)$. 
We conlcude that 
$$\pd_{\h_0(A)}(\h_0(A)\otimes^{\rm L}_A \RHom_A(X,Y))<\infty.$$ 
Combining with \ref{projective-transfer}, we have $\pd_A(\RHom_A(X,Y))<\infty$, and hence $\RHom_A(X,Y)$ is perfect over $A$ by Lemma \ref{characterizatiof perfect}.
\end{proof}

\begin{proposition}\label{characterization of finite injective dim}
   Assume, in addition, $A^\natural$ is Noetherian and that $A$ has a point-wise dualizing DG module $D$. For each $X\in \D^f_b(A)$, the following conditions are equivalent.
    \begin{enumerate}
        \item $X\in \thick_{\D^f_b(A)}(D)$.
        

        \item   $\id_{A_{\bar\p}}(X_{\bar\p})<\infty$ for each $\bar\p\in \Spec(\h_0(A))$.


   \item $\id_{A_{\bar\m}}(X_{\bar\m})<\infty$ for each $\bar\m\in \maxSpec(\h_0(A))$.
    \end{enumerate}
    If, in addition, $\dim(\h_0(A))<\infty$, then $X\in \thick_{\D^f_b(A)}(D)$ if and only if $\id_A(X)<\infty$. 
\end{proposition}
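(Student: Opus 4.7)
The plan is to deduce everything from results already established in the paper. The equivalence $(2) \iff (3)$ is immediate from Theorem \ref{DG case of Gabber} (conditions (3) and (4) there coincide under our Noetherian hypotheses). For $(1) \Longrightarrow (2)$, Proposition \ref{point-wise point-wise dualizing} gives that the localization $D_{\bar\p}$ is dualizing over $A_{\bar\p}$ for every $\bar\p \in \Spec(\h_0(A))$, so in particular $\id_{A_{\bar\p}}(D_{\bar\p}) < \infty$. Since localization is exact and commutes with the formation of $\thick$ closures, $X_{\bar\p} \in \thick_{\D^f_b(A_{\bar\p})}(D_{\bar\p})$, and a direct verification from \ref{injective-transfer} shows that the class of DG modules of finite injective dimension over a fixed DG ring is closed under shifts, finite direct sums, cones, and direct summands; thus $\id_{A_{\bar\p}}(X_{\bar\p}) < \infty$.

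The crux is $(3) \Longrightarrow (1)$, and it hinges on applying $\RHom_A(-, D)$ twice. First, since $D$ is point-wise dualizing, Proposition \ref{point-wise point-wise dualizing} ensures $\id_{A_{\bar\m}}(D_{\bar\m}) < \infty$ for all $\bar\m \in \maxSpec(\h_0(A))$, and by (3) the same holds for $X$. Setting $Y \colonequals \RHom_A(X, D)$, which lies in $\D^f_b(A)$ by \ref{def of point-wise dualizing}(1), Lemma \ref{Hom of injective} yields $Y \in \thick_{\D^f_b(A)}(A)$. Now apply $\RHom_A(-, D)$ again: on the one hand, this contravariant exact endofunctor on $\D^f_b(A)$ sends $A$ to $D$, and hence maps $\thick_{\D^f_b(A)}(A)$ into $\thick_{\D^f_b(A)}(D)$, so $\RHom_A(Y, D) \in \thick_{\D^f_b(A)}(D)$; on the other hand, the biduality isomorphism built into the definition of point-wise dualizing (see \ref{def of point-wise dualizing}(2)) gives $X \xrightarrow{\cong} \RHom_A(Y, D)$ in $\D(A)$. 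Hence $X \in \thick_{\D^f_b(A)}(D)$.

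For the concluding clause, under the additional assumption $\dim(\h_0(A)) < \infty$, Theorem \ref{DG case of Gabber} states that condition (4) there (our condition (3)) is equivalent to $\id_A(X) < \infty$; combined with the equivalences already established, this gives the final assertion. The main obstacle to watch for is simply confirming that $\RHom_A(-,D)$ genuinely restricts to an endofunctor of $\D^f_b(A)$ that carries $\thick(A)$ to $\thick(D)$ and realizes biduality on all of $\D^f_b(A)$—these are exactly the points where the point-wise dualizing hypothesis and Lemma \ref{Hom of injective} do the real work.
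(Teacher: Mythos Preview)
Your proof is correct and follows essentially the same approach as the paper: the implication $(1)\Rightarrow(2)$ via localization and thickness of the finite-injective-dimension locus, $(3)\Rightarrow(1)$ via Lemma~\ref{Hom of injective} applied to $X$ and $D$ followed by biduality, and the final clause via Theorem~\ref{DG case of Gabber}. Your application of Lemma~\ref{Hom of injective} directly in its global form is in fact slightly more streamlined than the paper's route, which first localizes and then invokes Corollary~\ref{local-global for D^f(A)} to globalize---a redundant step given the statement of Lemma~\ref{Hom of injective}.
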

\begin{proof}
   $(1)\Longrightarrow (2)$. Assume $X\in \thick_{\D^f_b(A)}(D)$. This implies that $X_{\bar\p}\in \thick_{\D^f_b(A_{\bar\p})}(D_{\bar\p})$ for each $\bar\p\in \Spec(\h_0(A))$. Let $\bar\p$ be a prime ideal of $\h_0(A)$. By Proposition \ref{point-wise point-wise dualizing}, $\id_{A_{\bar\p}}(D_{\bar\p})<\infty$.  Note that the full subcategory of $\D^f_b(A_{\bar\p})$ consisting of DG modules with finite injective dimensions over $A_{\bar\p}$ is thick. Since $X_{\bar\p}\in \thick_{\D^f_b(A_{\bar\p})}(D_{\bar\p})$, we get $\id_{A_{\bar\p}}(X_{\bar\p})<\infty$.

The implication $(2)\Longrightarrow (3)$ is trivial. 
   
   $(3)\Longrightarrow (1)$. Assume $\id_{A_{\bar\m}}(X_{\bar\m})<\infty$ for each $\bar\m\in \Spec(\h_0(A))$. Since $D$ is point-wise dualizing, we get $\RHom_{A}(X,D)\in \D^f_b(A)$. In particular, $\RHom_{A_{\bar\m}}(X_{\bar\m},D_{\bar\m})\in \D^f_b(A_{\bar\m})$ for each $\bar\m\in \Spec(\h_0(A))$; see \ref{localization}. Thus, Lemma \ref{Hom of injective} yields that $\RHom_{A_{\bar\m}}(X_{\bar\m},D_{\bar\m})$ is a perfect DG $A_{\bar\m}$-module for each $\bar\m\in \Spec(\h_0(A))$.  It follows from Corollary \ref{local-global for D^f(A)} that $\RHom_A(X,D)$ is perfect over $A$, and hence $\RHom_A(\RHom_A(X,D),D)$ is in $\thick_{\D^f_b(A)}(D)$; see \ref{level}. This implies that $X$ is in $\thick_{\D^f_b(A)}(D)$ as $X\cong \RHom_A(\RHom_A(X,D),D)$.

Next, we prove the second statement. Assume $\id_A(X)<\infty$, then it follows from the first statement and Theorem \ref{DG case of Gabber} that $X\in \thick_{\D^f_b(A)}(D)$. Assume $X\in \thick_{\D^f_b(A)}(D)$ and $\dim(\h_0(A))<\infty$. By Remark \ref{compare definitions} (2), $D$ is dualizing, and hence $\id_A(D)<\infty$. Combining this with $X\in \thick_{\D^f_b(A)}(D)$, we conclude that $\id_A(X)<\infty$. 
\end{proof}
\begin{chunk}\label{FID}
    Let $X$ be a DG $A$-module. The \emph{finite injective dimension locus} ($\FID$-locus) of $X$ 
    over $\h_0(A)$ is defined to be the set $$\FID_{\h_0(A)}(X)\colonequals\{\bar\p\in \Spec(\h_0(A))\mid \id_{A_{\bar\p}}(X_{\bar \p})<\infty\}.$$
\end{chunk}

\begin{theorem}\label{FID open}
 Let $A$ be a non-negative commutative DG ring such that $A^\natural$ is Noetherian and $\h(A)$ is finitely generated over $\h_0(A)$.   Assume $A$ has a point-wise dualizing DG module. For each $X\in \D^f_b(A)$,  and $\FID_{\h_0(A)}(X)$ is open in $\Spec(\h_0(A))$.
\end{theorem}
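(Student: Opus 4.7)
Let $D \in \D^f_b(A)$ denote the point-wise dualizing DG module supplied by hypothesis. The plan is to identify the finite injective dimension locus of $X$ with the ``thick-containment'' locus
\[
V \;\colonequals\; \bigl\{\bar\p\in\Spec(\h_0(A)) \,\bigm|\, X_{\bar\p}\in\thick_{\D^f_b(A_{\bar\p})}(D_{\bar\p})\bigr\},
\]
and then invoke the openness result established in Corollary \ref{H_0}. Once the identification $\FID_{\h_0(A)}(X)=V$ is in hand, the theorem is immediate: $V$ is the union over $n \ge 0$ of the sets $V_n\colonequals\{\bar\p : X_{\bar\p}\in\thick^n_{\D^f_b(A_{\bar\p})}(D_{\bar\p})\}$, and each $V_n$ is open in $\Spec(\h_0(A))$ by Corollary \ref{H_0} applied with $\G = \{D\}$, so $V = \bigcup_{n\ge 0} V_n$ is open.

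To establish $\FID_{\h_0(A)}(X)=V$, I would fix a point $\bar\p$ and work inside the local DG ring $A_{\bar\p}$. By Proposition \ref{point-wise point-wise dualizing}, $D_{\bar\p}$ is dualizing (in particular point-wise dualizing) over $A_{\bar\p}$; moreover $(A_{\bar\p})^\natural$ is Noetherian since $A^\natural$ is. The hypotheses of Proposition \ref{characterization of finite injective dim} are therefore satisfied for $A_{\bar\p}$ with point-wise dualizing DG module $D_{\bar\p}$, and the equivalence (1)$\Longleftrightarrow$(3) of that proposition gives
\[
X_{\bar\p}\in\thick_{\D^f_b(A_{\bar\p})}(D_{\bar\p}) \;\Longleftrightarrow\; \id_{(A_{\bar\p})_{\bar\m}}\bigl((X_{\bar\p})_{\bar\m}\bigr)<\infty \text{ for every } \bar\m\in\maxSpec(\h_0(A_{\bar\p})).
\]
The crucial simplification is that $\h_0(A_{\bar\p})$ is a local ring, so $\maxSpec(\h_0(A_{\bar\p}))$ consists of a single element, its unique maximal ideal $\bar\m_0$, and further localization at $\bar\m_0$ is trivial on both $A_{\bar\p}$ and $X_{\bar\p}$. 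Thus the right-hand side of the displayed equivalence collapses to the single condition $\id_{A_{\bar\p}}(X_{\bar\p})<\infty$, which is precisely what it means for $\bar\p$ to lie in $\FID_{\h_0(A)}(X)$. This yields $\FID_{\h_0(A)}(X)=V$ and finishes the proof.

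I expect the main conceptual step to be the initial reformulation: recognizing that the finite injective dimension locus admits a description as a locus defined by levels with respect to the single object $D$. Once this reformulation is set up, the openness theorem for levels (Corollary \ref{H_0}) together with the local-to-pointwise dualizing transfer (Proposition \ref{point-wise point-wise dualizing}) jointly do essentially all of the work, and no further commutative-algebra input is needed.
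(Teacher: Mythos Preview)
Your proposal is correct and follows essentially the same approach as the paper: both arguments identify $\FID_{\h_0(A)}(X)$ with the locus $\{\bar\p : X_{\bar\p}\in\thick_{\D^f_b(A_{\bar\p})}(D_{\bar\p})\}$ via Proposition~\ref{point-wise point-wise dualizing} and Proposition~\ref{characterization of finite injective dim} applied to the local DG ring $A_{\bar\p}$, and then conclude openness from Corollary~\ref{H_0}. Your write-up simply unpacks the ``single maximal ideal'' reduction more explicitly than the paper does.
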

\begin{proof}
  By assumption, there exists a point-wise dualizing DG $A$-module $D$. Let $\bar\p$ be a prime ideal of $\h_0(A)$. By Proposition \ref{point-wise point-wise dualizing}, $D_{\bar\p}$ is dualizing over $A_{\bar\p}$. It follows from Proposition \ref{characterization of finite injective dim} that $\id_{A_{\bar\p}}(X_{\bar\p})<\infty$ if and only if $X_{\bar\p}\in \thick_{\D^f_b(A_{\bar\p})}(D_{\bar\p})$, and hence we get the first equality below:
    \begin{align*}
    \FID_{\h_0(A)}(X)& =\{\bar\p\in \Spec(\h_0(A))\mid X_{\bar\p}\in \thick_{\D^f_b(A_{\bar\p})}(D_{\bar\p})\}  \\
    & =\bigcup_{n\geq 0}\{\bar\p\in \Spec(\h_0(A)\mid X_{\bar\p}\in \thick^n_{\D^f_b(A_{\bar\p}))}(D_{\bar\p})\}.
    \end{align*}
This set is open in $\Spec(\h_0(A))$ by Corollary \ref{H_0}. 
\end{proof}
\begin{remark}
(1) One can also define the finite injective dimension locus of a DG $A$-module $X$ over $A_0$ as $$\FID_{A_0}(X)\colonequals\{\p\in \Spec(A_0)\mid \id_{A_{\p}}(X_{\p})<\infty\}.$$
Keep the same assumption as Theorem \ref{FID open}, for each $X\in \D^f_b(A)$, the same argument of Theorem \ref{FID open} will imply that $\FID_{A_0}(X)$ is open in $\Spec(A_0)$. 

(2) Takahashi \cite[Theorem]{Takahashi:2006_Glasgow} observed that the $\FID$-locus of a finitely generated module over an excellent ring is open. 
\end{remark}
\begin{corollary}\label{essentially finite type-open}
   Let $R$ be a commutative Noetherian ring with a point-wise dualizing complex. Assume $S$ is an $R$-algebra essentially finite type over $R$. For each $M\in \D^f_b(S)$, $\FID_S(M)$ is open in $\Spec(S)$. 
\end{corollary}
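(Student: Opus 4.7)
The plan is to reduce Corollary \ref{essentially finite type-open} to an immediate application of Theorem \ref{FID open}, using \ref{ess finite type-point-wise dualizing} to supply the required point-wise dualizing complex.

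First I would view $S$ as a non-negative commutative DG ring concentrated in degree $0$. Then $S^\natural = S$, which is Noetherian by assumption since $S$ is essentially of finite type over the commutative Noetherian ring $R$ (or one may invoke Hilbert's basis theorem together with the fact that localizations of Noetherian rings are Noetherian). Furthermore, $\h_0(S) = S$ and $\h(S) = S$ is trivially finitely generated over $\h_0(S)$, so the standing hypotheses of Theorem \ref{FID open} on the DG ring are met.

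Next, by \ref{ess finite type-point-wise dualizing}, since $R$ has a point-wise dualizing complex and $S$ is essentially of finite type over $R$, the ring $S$ admits a point-wise dualizing complex. In view of Remark \ref{compare definitions}(3), any such complex is also point-wise dualizing in the sense of \ref{def of point-wise dualizing}, so this gives a point-wise dualizing DG module over the DG ring $S$.

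Finally, with all hypotheses of Theorem \ref{FID open} verified for the DG ring $S$, we conclude that for each $M \in \D^f_b(S)$, the set $\FID_{\h_0(S)}(M) = \FID_S(M)$ is open in $\Spec(\h_0(S)) = \Spec(S)$. There is no real obstacle here; the entire content of the corollary is packaged into producing the point-wise dualizing complex via \ref{ess finite type-point-wise dualizing} and then invoking Theorem \ref{FID open} in the classical (non-DG) case.
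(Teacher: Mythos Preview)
Your proposal is correct and follows essentially the same approach as the paper: invoke \ref{ess finite type-point-wise dualizing} to obtain a point-wise dualizing complex on $S$, then apply Theorem \ref{FID open}. You have simply spelled out the routine verifications (Noetherianity of $S$, $\h_0(S)=S$, compatibility of definitions via Remark \ref{compare definitions}(3)) that the paper leaves implicit.
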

\begin{proof}
   By \ref{ess finite type-point-wise dualizing}, $S$ has a point-wise dualizing complex.  The desired result follows from Theorem \ref{FID open}.
\end{proof}

\begin{remark}\label{ess finite type}
Let $R$ be a Gorenstein ring and $S$ be an $R$-algebra essentially of finite type over $R$. For each $M\in \D^f_b(S)$, Corollary \ref{essentially finite type-open} implies that $\FID_S(M)$ is open.  When $M$ is a finitely generated $S$-module, this result is due to Kimura \cite[Proposition 3.9 and Corollary 4.2]{Kimura}. 
\end{remark}

\begin{chunk}\label{Gorenstein def}
     If, in addition, $A$ is local,  $A$ is said to be \emph{Gorenstein} if $A$ itself serves as a dualizing DG module (equivalently, $\id_A(A)<\infty$). In general, $A$ is said to be \emph{Gorenstein} provided that the local DG ring $A_{\bar \p}$ is Gorenstein for each $\bar\p\in \Spec(\h_0(A))$. 

Following \cite{Shaul2022}, the \emph{Gorenstein locus} of $A$ over over $\h_0(A)$ is defined to be $$\Gor_{\h_0(A)}(A)\colonequals\{\bar\p\in \Spec(\h_0(A))\mid A_{\bar \p} \text{ is a Gorenstein DG ring}\}.$$
\end{chunk}
 \begin{remark}
 By Proposition \ref{point-wise point-wise dualizing},   $A$ is Gorenstein if and only if it is point-wise dualizing as a DG $A$-module. In our DG ring setting, the definition of a Gorenstein DG ring coincides with that in \cite{FIJ, FJ}.
 \end{remark}

Note that $\Gor_{\h_0(A)}(A)=\FID_{\h_0(A)}(A)$. As an immediate consequence of Theorem \ref{FID open}, we have:
\begin{corollary}\label{Gorenstein loci}
 Let $A$ be a non-negative commutative DG ring such that $A^\natural$ is Noetherian and $\h(A)$ is finitely generated over $\h_0(A)$.   Assume $A$ has a point-wise dualizing DG module. Then $\Gor_{\h_0(A)}(A)$ is open in $\Spec(\h_0(A))$.
\end{corollary}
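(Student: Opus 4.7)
The plan is to observe that the Gorenstein locus coincides with the finite injective dimension locus of $A$ itself, and then invoke Theorem \ref{FID open} with $X = A$. First I would note that $A \in \D^f_b(A)$: this follows directly from the standing assumption that $\h(A)$ is finitely generated over $\h_0(A)$, so $A$ is an admissible input to Theorem \ref{FID open}.

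Next I would spell out the identification $\Gor_{\h_0(A)}(A) = \FID_{\h_0(A)}(A)$. Unwinding definitions from \ref{Gorenstein def}, a prime $\bar\p \in \Spec(\h_0(A))$ lies in $\Gor_{\h_0(A)}(A)$ precisely when $A_{\bar\p}$ is a Gorenstein local DG ring, which by \ref{Gorenstein def} means $\id_{A_{\bar\p}}(A_{\bar\p}) < \infty$. This last condition is exactly the statement that $\bar\p \in \FID_{\h_0(A)}(A)$ in the sense of \ref{FID}, applied to the DG module $X = A$. So the two subsets of $\Spec(\h_0(A))$ agree as sets.

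Finally, I would apply Theorem \ref{FID open} directly to $X = A$: under the hypotheses that $A^\natural$ is Noetherian, $\h(A)$ is finitely generated over $\h_0(A)$, and $A$ admits a point-wise dualizing DG module, the set $\FID_{\h_0(A)}(A)$ is open in $\Spec(\h_0(A))$. Combined with the set-theoretic identification of the previous step, this yields openness of $\Gor_{\h_0(A)}(A)$.

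There is essentially no obstacle here; the whole content has been packaged into Theorem \ref{FID open}, and the only thing to verify is the trivial reduction $\Gor = \FID$ when the module in question is $A$ itself. The substantive work — building a point-wise dualizing module into the Letz-style open loci machinery via the equivalence with $\thick(D)$ in Proposition \ref{characterization of finite injective dim} and the openness in Corollary \ref{H_0} — has already been done.
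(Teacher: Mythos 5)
Your proposal is correct and coincides with the paper's own argument: the paper simply notes $\Gor_{\h_0(A)}(A)=\FID_{\h_0(A)}(A)$ (since $A_{\bar\p}$ Gorenstein means $\id_{A_{\bar\p}}(A_{\bar\p})<\infty$ by \ref{Gorenstein def}) and deduces openness immediately from Theorem \ref{FID open} applied to $X=A$. Your extra remark that $A\in\D^f_b(A)$ is a harmless explicit check of the same reduction.
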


\begin{remark}\label{recover Shaul's result}
When $A$ has a dualizing DG module in the sense of \cite[Definition 7.1]{Yekutieli}, Corollary \ref{Gorenstein loci} is due to the third author \cite[Theorem 1]{Shaul2022}. Specifically, the third author demonstrated that, for a non-negative DG ring such that $\h_0(A)$ is Noetherian and $\h(A)$ is finitely generated over $\h_0(A)$, the set $\Gor_{\h_0(A)}(A)$ is open when $A$ has a dualizing DG module in the sense of \cite{Yekutieli}. 

\end{remark}

Given a DG module $X$, its support is given by
\[
\Supp_{A}(X) = \bigcup_{n\in \mathbb{Z}} \Supp_{\h_0(A)}(\h_n(X)).
\]
In particular, if $X \in \D^f_b(A)$ and $\h_0(A)$ is Noetherian then $\Supp_A(X)$ is a closed subset of $\Spec(\h_0(A))$.

\begin{remark}
As shown in \cite[Proposition 5]{Shaul2022},
even in very nice cases, the open set $\Gor_{\h_0(A)}(A)$ may be empty. 
If $A^\natural$ is Noetherian, $\h(A)$ is finitely generated over $\h_0(A)$, and $A$ has a point-wise dualizing DG module,
and assuming $X$ is perfect over $A$,
and that $\Gor_{\h_0(A)}(A) \cap \Supp_{A}(X) \ne \emptyset$, it follows that the open set $\FID_{\h_0(A)}(X)$ is non-empty.
\end{remark}

The notion of a Cohen-Macaulay DG ring was introduced in \cite[Definition 8.8]{Shaul-TAMS}. We have the following observation about the Cohen-Macaulay locus.   

\begin{proposition}
Let $A$ be a non-negative commutative DG ring such that $A^\natural$ is Noetherian and $\h(A)$ is finitely generated over $\h_0(A)$. Assume $A$ has a point-wise dualizing DG module,
and let $M \in \D^f_b(A)$ be such that $\amp(M_{\bar\p}) \le \amp(A_{\bar\p})$ for all $\bar\p\in\Supp(\h_0(A))$ and $\Supp_A(M) = \Spec(\h_0(A))$.
Then the set
\[
\CM(A) = \{\bar\p \in \Spec(\h_0(A)) \mid A_{\bar\p} \mbox{ is Cohen-Macaulay}\}
\]
contains the open set $\FID_{\h_0(A)}(M)$.
\end{proposition}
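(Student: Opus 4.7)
The plan is to fix a prime $\bar\p \in \FID_{\h_0(A)}(M)$ and show that $A_{\bar\p}$ is Cohen-Macaulay. Setting $B := A_{\bar\p}$, $N := M_{\bar\p}$, and $D' := D_{\bar\p}$ for the point-wise dualizing DG module $D$ of $A$, Proposition \ref{point-wise point-wise dualizing} gives that $D'$ is dualizing over $B$. By the hypotheses, $N \in \D^f_b(B)$ is non-zero (since $\bar\p \in \Supp_A(M) = \Spec(\h_0(A))$), satisfies $\id_B(N) < \infty$, and $\amp(N) \le \amp(B)$. Thus the task reduces to showing that a local DG ring admitting such an $N$ must be Cohen-Macaulay.

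Next, Proposition \ref{characterization of finite injective dim} yields $N \in \thick_{\D^f_b(B)}(D')$. Since $\RHom_B(-, D')$ restricts to a duality between $\thick_{\D^f_b(B)}(D')$ and the subcategory $\thick_{\D^f_b(B)}(B)$ of perfect complexes, the object $P := \RHom_B(N, D')$ is a non-zero perfect complex with
\[
N \cong \RHom_B(P, D') \cong \RHom_B(P,B) \otimes^{\rm L}_B D' = P^\vee \otimes^{\rm L}_B D',
\]
where $P^\vee := \RHom_B(P, B)$ is again non-zero and perfect (the second isomorphism using that $P$ is perfect).

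Then I would invoke an amplitude inequality for local DG rings. Combining J{\o}rgensen's DG amplitude inequality $\amp(X \otimes^{\rm L}_B Y) \ge \amp(X) + \amp(Y) - \amp(B)$ (valid for non-zero $X, Y \in \D^f_b(B)$ with $X$ perfect) with the lower bound $\amp(P^\vee) \ge \amp(B)$ for non-zero perfect complexes over a local DG ring yields $\amp(N) \ge \amp(D')$. Together with $\amp(N) \le \amp(B)$, this forces $\amp(D') \le \amp(B)$. Since for a local DG ring admitting a dualizing module one always has $\amp(D') \ge \amp(B)$, with equality characterizing the Cohen-Macaulay condition (Shaul's theory of Cohen-Macaulay DG rings in \cite{Shaul-TAMS}), it follows that $A_{\bar\p}$ is Cohen-Macaulay.

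The main obstacle is obtaining the clean amplitude inequality $\amp(P^\vee \otimes^{\rm L}_B D') \ge \amp(D')$ in the DG context: unlike the classical Iyengar-Foxby bound, J{\o}rgensen's DG analogue carries a $-\amp(B)$ correction, so one must separately establish the compensating lower bound $\amp(P^\vee) \ge \amp(B)$ for non-zero perfect complexes over a local DG ring. Producing this bound and then invoking the precise DG Cohen-Macaulay criterion are the delicate points; without the perfect-complex amplitude bound the argument only yields the weaker $\amp(D_{\bar\p}) \le 2\amp(A_{\bar\p})$, which constrains but does not detect the Cohen-Macaulay locus.
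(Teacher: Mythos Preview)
The paper's proof is a single sentence: it invokes \cite[Theorem~5.22(2)]{Shaul-TAMS}, the DG analogue of Bass's theorem, asserting that a Noetherian local DG ring $B$ which admits a non-zero $N\in\D^f_b(B)$ with $\id_B(N)<\infty$ and $\amp(N)\le\amp(B)$ is Cohen--Macaulay. Your first paragraph reduces the proposition to exactly this local statement (localize at $\bar\p\in\FID_{\h_0(A)}(M)$; then $N=M_{\bar\p}$ is non-zero by the full-support hypothesis, has finite injective dimension by the choice of $\bar\p$, and satisfies the amplitude bound by assumption), so up to that point you and the paper coincide.

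Where you diverge is that, rather than quoting the result, you attempt to \emph{reprove} it via dualizing-module duality and amplitude inequalities. The strategy is sound in outline and is indeed close in spirit to how \cite[Theorem~5.22(2)]{Shaul-TAMS} is obtained, but as you yourself acknowledge, your argument is incomplete: the key step $\amp(P^\vee\otimes^{\rm L}_B D')\ge\amp(D')$ is left hanging on an unproved bound $\amp(P^\vee)\ge\amp(B)$ together with a form of the tensor amplitude inequality whose precise DG formulation you are unsure of. These are exactly the delicate points that the argument in \cite{Shaul-TAMS} handles (ultimately via reduction to the classical New Intersection Theorem), so you are re-entering non-trivial territory that the paper deliberately treats as a black box. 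For the purposes of the present proposition, simply citing \cite[Theorem~5.22(2)]{Shaul-TAMS} after your first paragraph is both complete and is what the paper does.
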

\begin{proof}
This follows from \cite[Theorem 5.22(2)]{Shaul-TAMS}.
\end{proof}

\bibliographystyle{amsplain}
\bibliography{mainbib}

\end{document}